\DeclareMathAlphabet{\mathpzc}{OT1}{pzc}{m}{it}
\newcommand{\mh}[2][e]{%
    \IfStrEqCase{#1}{%
        {e}{\textcolor{Blue}{\textbf{*Mary: #2*}}}%
        {c}{}%
    }
}
\newcommand{\nd}[2][e]{%
    \IfStrEqCase{#1}{%
        {e}{\textcolor{PineGreen}{\textbf{*Nathan: #2*}}}%
        {c}{}%
    }
}
\newcommand{\mhadd}[2][e]{%
    \IfStrEqCase{#1}{%
        {e}{\textcolor{Blue}{\textbf{#2}}}%
        {r}{#2}%
    }
}
\newcommand{\ndadd}[2][e]{%
    \IfStrEqCase{#1}{%
        {e}{\textcolor{PineGreen}{\textbf{#2}}}%
        {r}{#2}%
    }
}
\theoremstyle{plain}
\newtheorem{theorem}{Theorem}[section]
\newtheorem{corollary}[theorem]{Corollary}
\newtheorem{lemma}[theorem]{Lemma}
\newtheorem{proposition}[theorem]{Proposition}
\theoremstyle{definition}
\newtheorem{definition}[theorem]{Definition}
\newtheorem{remark}[theorem]{Remark}
\newtheorem*{theorem*}{Theorem}
\renewcommand{\epsilon}{\varepsilon}
\newcommand{\vertiii}[1]{{\left\vert\kern-0.25ex\left\vert\kern-0.25ex\left\vert #1
		\right\vert\kern-0.25ex\right\vert\kern-0.25ex\right\vert}}
\newcommand{\om}{\ensuremath{\omega}}
\newcommand{\al}{\ensuremath{\alpha}}
\newcommand{\ep}{\ensuremath{\epsilon}}
\DeclareSymbolFont{bbold}{U}{bbold}{m}{n}
\DeclareSymbolFontAlphabet{\mathbbold}{bbold}
\DeclareMathOperator{\diam}{diam}
\DeclareMathOperator{\VD}{VD}
\DeclareMathOperator{\Jac}{Jac}
\newcommand{\cO}{\ensuremath{\mathcal{O}}}
\newcommand{\cP}{\ensuremath{\mathcal{P}}}
\newcommand{\cR}{\ensuremath{\mathcal{R}}}
\newcommand{\cV}{\ensuremath{\mathcal{V}}}
\newcommand{\NN}{\ensuremath{\mathbb N}}
\newcommand{\PP}{\ensuremath{\mathbb P}}
\newcommand{\RR}{\ensuremath{\mathbb R}}
\providecommand{\phantomsection}{}
\newcommand{\mylabel}[2]{\raisebox{.7\normalbaselineskip}{\phantomsection}(#1)
	\def\@currentlabel{#1}\textlabel{#2}}
\newcommand\xlabel[2][]{\phantomsection\def\@currentlabelname{#1}\label{#2}}
\NewDocumentCommand{\mathlist}{ O{,} m m }
 {
  \egreg_mathlist:nnn { #1 } { #2 } { #3 }
 }
\numberwithin{equation}{section}
\title[]{Multifractal Analysis of  equilibrium states of endomorphisms of $\mathbb P^k$}
\date{\today}
\author[N. Dalaklis]{Nathan Dalaklis}
\address[N. Dalaklis]{Department of Mathematics, University of Oklahoma, Norman, OK 73019, USA}
\email{\href{ndalaklis@ou.edu}{ndalaklis@ou.edu}}
\author[Y.M. He]{Yan Mary He}
\address[Y.M. He]{Department of Mathematics, University of Oklahoma, Norman, OK 73019, USA}
\email{\href{he@ou.edu}{he@ou.edu}}
\begin{document}

\begin{abstract}
Let $f$ be a holomorphic endomorphism of $\mathbb C \mathbb P^k$ of algebraic degree at least 2
and let $X \subseteq \mathbb C \mathbb P^k$ be an uniformly expanding set. In this paper, we study multifractal analysis of equilibrium states of H\"older continuous functions for the {\it non-conformal} dynamical system $f \colon X \to X$. In lieu of Hausdorff dimensions, we use a new dimension theory (i.e., the volume dimension theory) 
to define various local dimension multifractal spectra and 
show that each of these spectra form a Legendre transform pair with the temperature function as in the conformal case. As an application of our main theorems, we also prove a conditional variational principle for such dimension multifractal spectra.
\end{abstract}
	
\maketitle

\section{Introduction}

\subsection{Motivation and background}
If $f \colon X \to X$ is a dynamical system, many of its important properties can be characterized using {\it local asymptotic quantities} $\varphi_\infty(x) = \lim_{n \to \infty}\varphi_n(x)$ such as Lyapunov exponents, local dimensions and local entropies. The distribution of these local values across the space $X$ often forms a {\it multifractal structure} of $X$. In particular, one considers level sets $J(\alpha):=\{x \in X: \varphi_\infty(x) = \alpha\}$ and obtains a multifractal decomposition of $X$.
Multifractal analysis provides a global statistical picture of these local asymptotic quantities and interprets the geometry of their variation on $X$. In particular, one defines a {\it multifractal spectrum} $S(\alpha):={\mathcal{S}}(J(\alpha))$ where ${\mathcal{S}}$ is a quantity measuring the size of the set $J(\alpha)$ such as Hausdorff dimension or topological entropy.

There has been substantial amount of work on  multifractal analysis for {\it conformal} dynamical systems. In \cite{Besi35}, Besicovitch studied the Hausdorff dimension of sets determined by the frequency of the
digits in dyadic expansions, which is essentially a multifractal analysis of the Birkhoff averages of the indicator functions for the doubling map. 
Multifractal analysis of the Birkhoff averages of a continuous potential on various dynamical systems such as a mixing subshift of finite type is studied in \cite{BS01,BSS02_nt,BSS02_app,FF00,Hof10,OW07,TV03,Tem01}. Pesin-Weiss \cite{PesinWeiss97} studied a multifractal analysis of equilibrium measures for conformal expanding maps.
Roy-Urbanski \cite{RU09} studied the multifractal analysis of the conformal measure associated to a family of weights imposed upon a graph directed Markov system. We refer the reader to the survey paper \cite{Clim14} for recent development.

On the other hand, the multifractal analysis of {\it non-conformal} dynamical systems is substantially more difficult and remains largely undeveloped. Conformal maps expand or contract equally in all directions, which allows one to obtain distortion lemmas to control how sets are stretched and to compare scales uniformly. However, non-conformal maps allow arbitrarily large distortion, even on small scales. This lack of conformality complicates the geometric structure of invariant sets and measures, causing classical methods to fail and suggesting that new techniques are needed.
In \cite{Barreirabook11}, Barreira-Gelfert used {\it non-additive thermodynamic formalism} to study the multifractal analysis of the level sets of Lyapunov exponents for a class of non-conformal repellers. In \cite{JordanRams11, King95}, the authors studied the local dimension spectra for Bernoulli measures on Bedford-McMullen carpets. More recent work by Cao-Pesin-Zhao \cite{CPZ19} has made progress towards understanding the dimensions of the whole repeller, but they do not preform a multifractal analysis or estimates of the spectra therein. In the case of the plane, Falconer-Fraiser-Lee \cite{Fal2021} and Qiu-Wang-Wang \cite{Q2024} investigated the $L^q$-spectra for planar iterated function systems and planar graph directed systems with restrictions on the contractive behavior of the mappings.

In this paper, we consider the non-conformal multifractal analysis of equilibrium states in the setting of complex dynamics and in particular holomorphic endomorphisms of $\mathbb C\mathbb P^k$, which are generalizations of rational maps on the Riemann sphere to higher dimensional complex projective spaces. The recent work \cite{BHMane} of Bianchi and the second author introduces a delicate dimension theory (i.e., the volume dimension) for expanding measures and sets of such maps. 
Thanks to the volume dimension, in this paper, we study a multifractal analysis of equilibrium states for a holomorphic endomorphism of $\mathbb C\mathbb P^k$ on an uniformly expanding set.

\subsection{Statement of results}
Let $f$ be a holomorphic endomorphism of $\mathbb P^k = \mathbb C\mathbb P^k$ of algebraic degree at least 2.
Let $X \subseteq \mathbb P^k$ be an uniformly expanding set; that is $X$ is a closed invariant set and there exist $\eta > 1$ and $C>0$ such that
$||Df_x^n(v)|| > C\eta^n||v||$ for every $x \in X$, $v \in T_x \mathbb P^k$, and $n\in\mathbb N$.
Consider the dynamical system $f \colon X \to X$. Let $g \colon X \to \mathbb R$ be a H\"older continuous function with pressure $\mathcal{P}(g)=0$. For $(q,t) \in \mathbb R^2$, we consider the following 2-parameter family of potentials $\phi_{q,t} \colon X \to \mathbb R$ given by
\begin{equation*} 
\phi_{q,t} := qg - t\log |\Jac f|.
\end{equation*}
For each fixed $q \in \mathbb R$, the pressure function $t \mapsto \mathcal{P}(\phi_{q,t})$ is strictly decreasing with $\mathcal{P}(\phi_{q,0})>0$; see Lemma \ref{lem_property_qt} \eqref{ITM:SDP}. We define $T(q)$ to be the unique number such that $\mathcal{P}(\phi_{q,t}) = 0$. The function $q \mapsto T(q)$ is called the {\it temperature function}, which is one of the main objects of study in multifractal analysis.

Another main object of study in multifractal analysis is a fine dimension multifractal spectrum. For conformal dynamical systems, it is often the so-called {\it fine Hausdorff dimension multifractal spectrum}. The idea is to decompose the set $X$ as level sets $\{X_\alpha\}_{\alpha\ge0}$ of some {\it local dimension} of a fixed measure $\nu$, together with an irregular set consisting of points in $X$ at which the local dimension does not exist. The local dimension at $x \in X$ is defined as $\lim_{\kappa \to 0}\frac{\log \nu(B(x,\kappa))}{\log \kappa}$, where $B(x,\kappa)$ is the ball of radius $\kappa$ around $x$. Then the fine Hausdorff dimension multifractal spectrum is the function $\alpha \mapsto {\rm HD}(X_\alpha)$, where HD denotes the Hausdorff dimension.

In our setting, however, the dynamical system $f \colon X  \to X$ is {\it non-conformal}. As a consequence, balls and Hausdorff dimension of sets and measures are no longer intrinsic to the dynamics. Therefore the fine Hausdorff dimension multifractal spectrum may no longer be useful and an appropriate substitute is needed. In \cite{BHMane}, Bianchi and the second author introduced a {\it local volume dimension} and {\it volume dimension} for sets and measures, which depends on the dynamics of $f$ and incorporates the absence of an analogue of Koebe’s theorem and the non-conformality of holomorphic endomorphisms of $\mathbb P^k, k\ge2$. We use such dimensions in lieu of local dimension and Hausdorff dimension in our multifractal analysis.

Let $\nu$ be the equilibrium state of $g$. For each $q \in \mathbb R$, let $\nu_q$ be the equilibrium state of $\phi_{q,T(q)}$. We define
\begin{equation*} 
\alpha(q) := -\frac{\int_X g d\nu_q}{\int_X \log |\Jac f| d\nu_q} >0.
\end{equation*}
Recall from \cite{BHMane} that $\delta_{\nu,x}$ is the {\it local volume dimension} of $\nu$ at $x\in X$. 
For each $q \in \mathbb R$, we define
$$J_\nu(\alpha(q)):= \{x \in X: \delta_{\nu,x} = \alpha(q)\}.$$
Then we obtain a {\it multifractal decomposition} of $X$, namely,
$$X = \cup_{q}J_\nu(\alpha(q)) \cup J_\nu'$$
where $J_\nu':=\{x \in X : \delta_{\nu,x} \text{ does not exist}. \}$. 

Consider the volume dimensions of the sets $J_\nu(\alpha(q))$. We first recall from \cite{BHMane} the {\it volume dimension} $\VD_{f,\mu}(Y)$ of the set $Y \subset X$ with respect to a measure $\mu$.
We define the {\it parametrized volume dimension multifractal spectrum} $\hat{S}_\nu \colon \mathbb R \to \mathbb R$ by
$$\hat{S}_\nu(\alpha(q)) := \VD_{f,\nu_q}(J_\nu(\alpha(q))).$$
We call this volume dimension multifractal spectrum {\it parametrized} as the volume dimension is taken with respect to the measure $\nu_q$ which varies with $q \in \mathbb R$. We note that if all the Lyapunov exponents of $\nu_q$ are equal, by \cite{BHMane}, we recover the Hausdorff dimension of the set, i.e., $\VD_{f,\nu_q}(J_\nu(\alpha(q))) = \frac{1}{k}{\rm HD}(J_\nu(\alpha(q)))$.

If $Y$ is an uniformly expanding set, recall from \cite{BHMane} (see also Definition \ref{def_vd_expandingset}) that the volume dimension $\VD_f(Y)$ is defined. It turns out that $J_\nu(\alpha(q))$ is $f$-invariant (see Remark \ref{rmk_Jnu_invariant}) 
and thus it is an uniformly expanding set. 
We define the {\it fine volume dimension multifractal spectrum} $S_\nu \colon \mathbb R \to \mathbb R$ by
$$S_\nu(\alpha(q)) := \VD_f(J_\nu(\alpha(q))) =\displaystyle\sup_{\mu \in \mathcal{M}^+_{J_\nu(\alpha(q))}(f)}\VD_{f,\mu}(J_\nu(\alpha(q))).$$

While it seems that our (parametrized) volume dimension multifractal spectrum ($\hat{S}_\nu(\alpha)$ or) $S_\nu(\alpha)$ is only defined for $\alpha$ in the range $[\alpha_1,\alpha_2] \subset (0,\infty)$ (see Remark \ref{rmk_range_alpha(q)}) of the function $q \mapsto \alpha(q)$, we will see in Theorem \ref{thm_main_2} that this is not a constraint as the set $J_\nu(\alpha)$ is empty if $\alpha \notin [\alpha_1,\alpha_2]$.

\smallskip

Let $I$ be an interval and let $h \colon I \to \mathbb R$ be a strictly convex $C^2$-function, i.e., $h''(x)>0$ for all $x \in I$. Recall that the {\it Legendre transform} of $h$ is the differentiable function $g \colon \mathbb R \to \mathbb R$ given by
$$g(p) := {\rm max}_{x \in I}(px - h(x)).$$
Two strictly convex functions $h$ and $g$ form a {\it Legendre transform pair} if and only if $g(\alpha) = h(q) + q\alpha$ where $\alpha(q) = -h'(q)$ and $q = g'(\alpha)$.

Our first two main theorems concern properties of the functions $q \mapsto \hat{S}_{\nu}(\alpha(q))$ (resp. $q \mapsto S_{\nu}(\alpha(q))$) and $q \mapsto T(q)$ and state that the functions $\alpha \mapsto -\hat{S}_\nu(-\alpha)$ (resp. $\alpha \mapsto -S_\nu(-\alpha)$) and $q \mapsto T(q)$ form a Legendre transform pair. In the conformal setting, an analogue of the theorem has been proved by several authors; see for example \cite{Barreirabook11,Pesin97book,PesinWeiss97,PU}.

\begin{theorem} \label{thm_main_1}
Let $k \ge 1$ be an integer.
Let $f$ be a holomorphic endomorphism of $\mathbb P^k$ of algebraic degree at least 2, $X \subseteq P^k$ an uniformly expanding set and $\nu \in \mathcal{M}_X^+(f)$ an equilibrium state of a H\"older continuous function $g \colon X \to \mathbb R$. Then the following statements hold.
\begin{enumerate}
\item The temperature function $T \colon \mathbb R \to \mathbb R$ is real-analytic. We have $T(0) = \VD_f(X)$ and $T(1) = 0$.
The derivative of $T$ satisfies $T'(q) = -\alpha(q) <0$ for all $q \in \mathbb R$. Moreover, $T(q)$ is convex. It is strictly convex if and only if $\nu \neq \nu_0$, where $\nu_0$ is the equilibrium state of the potential function $-\VD_f(X)\log|\Jac f|$.
\item The function $\alpha \mapsto \hat{S}_\nu(\alpha)$ is real-analytic and $\hat{S}_\nu(\alpha(q)) = \VD_f(\nu_q)= T(q) + q\alpha(q)$.
\item If $\nu \neq \nu_0$, then the functions $\alpha \mapsto -\hat{S}_\nu(-\alpha)$ and $q \mapsto T(q)$ are strictly convex and form a {\it Legendre transform pair}.
\item If $\nu = \nu_0$, then $T(q)$ is a straight line and $\alpha(q) = \VD_f(X)$ for all $q\in \mathbb{R}$. 

\end{enumerate}
\end{theorem}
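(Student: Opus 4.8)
The plan is to establish part (1) directly from the thermodynamic formalism and then derive parts (2)--(4) from it together with the volume-dimension formula for equilibrium states. Throughout I will use that, for uniformly expanding holomorphic endomorphisms, the map $(q,t) \mapsto \mathcal{P}(\phi_{q,t})$ is real-analytic and convex (it is the pressure of the family $\phi_{q,t}$, which is affine in $(q,t)$), with first derivatives $\partial_q \mathcal{P}(\phi_{q,t}) = \int_X g\, d\mu_{q,t}$ and $\partial_t \mathcal{P}(\phi_{q,t}) = -\int_X \log|\Jac f|\, d\mu_{q,t}$, where $\mu_{q,t}$ is the equilibrium state of $\phi_{q,t}$. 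For part (1), real-analyticity of $T$ follows from the implicit function theorem applied to $\mathcal{P}(\phi_{q,T(q)}) = 0$, since $\partial_t \mathcal{P} = -\int_X \log|\Jac f|\, d\mu_{q,t} < 0$ by uniform expansion (Lemma \ref{lem_property_qt}). The values $T(1)=0$ and $T(0)=\VD_f(X)$ come respectively from $\mathcal{P}(\phi_{1,0})=\mathcal{P}(g)=0$ and from the Bowen-type equation characterizing $\VD_f(X)$ as the unique zero of $t \mapsto \mathcal{P}(-t\log|\Jac f|)$ (Definition \ref{def_vd_expandingset}). Implicit differentiation of $\mathcal{P}(\phi_{q,T(q)})=0$ gives $\int_X g\, d\nu_q - T'(q)\int_X \log|\Jac f|\, d\nu_q = 0$, i.e. $T'(q) = -\alpha(q) < 0$. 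Convexity of $T$ is inherited from that of $\mathcal{P}$; differentiating once more expresses $T''(q)$ as a positive multiple of the asymptotic variance of $g + \alpha(q)\log|\Jac f|$ with respect to $\nu_q$, which vanishes precisely when $g$ is cohomologous to a scalar multiple of $\log|\Jac f|$ plus a constant. Since equilibrium states of H\"older potentials coincide iff the potentials are cohomologous up to an additive constant, this vanishing is equivalent to $\nu = \nu_0$, yielding the strict-convexity dichotomy.

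For part (2), the main dynamical input is that $\nu_q$ is concentrated on $J_\nu(\alpha(q))$: by the Gibbs property of $\nu$ and the Birkhoff ergodic theorem, for $\nu_q$-a.e. $x$ the local volume dimension satisfies $\delta_{\nu,x} = -\frac{\int_X g\, d\nu_q}{\int_X \log|\Jac f|\, d\nu_q} = \alpha(q)$. Combined with the volume-dimension formula for invariant measures from \cite{BHMane}, $\VD_{f,\nu_q} = h_{\nu_q}(f)/\int_X \log|\Jac f|\, d\nu_q$, this gives $\hat{S}_\nu(\alpha(q)) = \VD_{f,\nu_q}(J_\nu(\alpha(q))) = \VD_f(\nu_q)$. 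Using that $\nu_q$ has pressure zero, $h_{\nu_q}(f) = -q\int_X g\, d\nu_q + T(q)\int_X \log|\Jac f|\, d\nu_q$; dividing by $\int_X \log|\Jac f|\, d\nu_q$ and invoking the definition of $\alpha(q)$ yields $\VD_f(\nu_q) = T(q) + q\alpha(q)$. Real-analyticity of $\alpha \mapsto \hat{S}_\nu(\alpha)$ follows because, when $\nu \neq \nu_0$, the map $\alpha(q) = -T'(q)$ is a real-analytic diffeomorphism by part (1), so $q=q(\alpha)$ is real-analytic and $\hat{S}_\nu(\alpha) = T(q(\alpha)) + q(\alpha)\alpha$.

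Parts (3) and (4) are then essentially formal. For (3), with $h=T$ strictly convex we have $\alpha(q) = -T'(q)$ and $\hat{S}_\nu(\alpha(q)) = T(q) + q\alpha(q)$ from parts (1)--(2); computing the classical Legendre transform $T^*(p) = \sup_q(pq - T(q))$, whose supremum is attained at $T'(q)=p=-\alpha(q)$, gives $T^*(\alpha) = -\hat{S}_\nu(-\alpha)$, so $\alpha \mapsto -\hat{S}_\nu(-\alpha)$ is strictly convex and forms a Legendre transform pair with $T$. For (4), when $\nu=\nu_0$ the potentials $g$ and $-\VD_f(X)\log|\Jac f|$ are cohomologous (both of pressure $0$, hence with no additive constant), so $\phi_{q,t}$ is cohomologous to $-(q\VD_f(X)+t)\log|\Jac f|$; its pressure vanishes iff $q\VD_f(X)+t=\VD_f(X)$, forcing $T(q) = (1-q)\VD_f(X)$, a straight line, and $\alpha(q) = -T'(q) = \VD_f(X)$ for all $q$.

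The main obstacle is part (2), and specifically the two inputs drawn from the volume-dimension theory of \cite{BHMane}: the almost-everywhere identity $\delta_{\nu,x} = \alpha(q)$ for $\nu_q$, and the measure formula $\VD_{f,\nu_q} = h_{\nu_q}(f)/\int_X \log|\Jac f|\, d\nu_q$. Unlike the conformal case, these cannot rely on Koebe distortion or on balls being dynamically natural; establishing them requires controlling the non-conformal volume scales along orbits and matching them to the Gibbs mass of Bowen balls. This is the technical heart on which the clean Legendre-pair structure rests, and everything else reduces to the differentiation of pressure and elementary convex-duality bookkeeping carried out above.
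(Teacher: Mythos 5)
Your parts (1), (3) and (4) follow the paper's route essentially verbatim (implicit differentiation of $\mathcal{P}(q,T(q))=0$, the variance formula for $T''$, convex duality for the Legendre pair, and cohomology in the degenerate case), and they are fine. The genuine gap is in part (2), at the sentence where you conclude $\hat{S}_\nu(\alpha(q)) = \VD_{f,\nu_q}(J_\nu(\alpha(q))) = \VD_f(\nu_q)$ from the two facts $\nu_q(J_\nu(\alpha(q)))=1$ and $\VD_f(\nu_q)=h_{\nu_q}(f)/L_{\nu_q}$. The volume dimension of a measure is defined as an infimum over full-measure sets,
\[
\VD_f(\nu_q)=\inf\left\{\VD_{f,\nu_q}(Y)\;:\;Y \text{ Borel},\ \nu_q(Y)=1\right\},
\]
so full measure of $J_\nu(\alpha(q))$ yields only the one-sided bound $\VD_{f,\nu_q}(J_\nu(\alpha(q)))\ge \VD_f(\nu_q)$; nothing in your argument rules out that this particular full-measure set has strictly larger volume dimension than the infimum. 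In other words, the upper bound $\VD_{f,\nu_q}(J_\nu(\alpha(q)))\le T(q)+q\alpha(q)$ is unproved, and it is precisely the non-trivial half of the identity.

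What is missing, and what your proposal never touches, is the local volume dimension of $\nu_q$ (not of $\nu$) at \emph{every} point (not $\nu_q$-a.e.\ point) of $J_\nu(\alpha(q))$. The paper obtains this in Lemma \ref{lem_3_part2}(1): combining the Gibbs property of $\nu$ (which controls $\nu(U)$ by Birkhoff sums of $g$), the Gibbs property of $\nu_q$ (which controls $\nu_q(U)$ by Birkhoff sums of $qg-T(q)\log|\Jac f|$), and the stopping-time comparison of Birkhoff sums of $\log|\Jac f|$ with $\log{\rm Vol}(U)$, one gets $\nu_q(U)\asymp {\rm Vol}(U)^{T(q)/2}\,\nu(U)^q$ along the relevant scales, hence $\delta_{\nu_q,x}=T(q)+q\,\delta_{\nu,x}=T(q)+q\alpha(q)$ for every $x\in J_\nu(\alpha(q))$. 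A Billingsley-type lemma for volume dimension, \cite[Proposition 4.22]{BHMane}, then converts this everywhere-pointwise statement into the set-dimension identity $\VD_{f,\nu_q}(J_\nu(\alpha(q)))=T(q)+q\alpha(q)$, which together with your (correct) computation $\VD_f(\nu_q)=T(q)+q\alpha(q)$ gives part (2). Note also that establishing the pointwise identity at every point of the level set requires the reverse inclusion $J_\nu(\alpha(q))\subseteq\pi(\hat{X}_q)$ (a point with local dimension $\alpha(q)$ must have the Birkhoff-ratio limit), i.e.\ the full identification proved in Lemma \ref{lem_3_part1}, whereas your Gibbs-plus-Birkhoff argument only produces the a.e.\ direction. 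Once these two ingredients are supplied, the remainder of your part (2) (pressure zero and analyticity via $\alpha(q)=-T'(q)$) agrees with the paper.
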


\begin{theorem}\label{thm_main_1.5}
Let $f$ and $\nu$ be as in Theorem \ref{thm_main_1}. Then Theorem \ref{thm_main_1} (2) and (3) hold for $S_\nu(\alpha(q))$.

\end{theorem}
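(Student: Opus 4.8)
The plan is to prove the pointwise identity $S_\nu(\alpha(q)) = \hat{S}_\nu(\alpha(q))$ for every $q \in \RR$. Once this is established, the function $\alpha \mapsto S_\nu(\alpha)$ coincides with $\alpha \mapsto \hat{S}_\nu(\alpha)$ on the range of $\alpha(\cdot)$, so statements (2) and (3) of Theorem \ref{thm_main_1} transfer verbatim. Recalling that $\hat{S}_\nu(\alpha(q)) = \VD_{f,\nu_q}(J_\nu(\alpha(q)))$ while $S_\nu(\alpha(q)) = \sup_{\mu \in \cM^+_{J_\nu(\alpha(q))}(f)} \VD_{f,\mu}(J_\nu(\alpha(q)))$, the task splits into a lower and an upper bound.

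For the lower bound I would first verify that $\nu_q$ itself lies in $\cM^+_{J_\nu(\alpha(q))}(f)$, i.e. that $\nu_q(J_\nu(\alpha(q))) = 1$. Using the ergodic characterization of the local volume dimension from \cite{BHMane}, the quantity $\delta_{\nu,x}$ is a limit of Birkhoff-type ratios of the Gibbs weights of $\nu$ against the logarithmic Jacobian along the orbit of $x$; under the Gibbs property of $\nu$ it reduces, for $\nu_q$-almost every $x$, to $-\big(\lim_n \tfrac1n S_n g(x)\big)\big/\big(\lim_n \tfrac1n S_n \log|\Jac f|(x)\big)$. Since $\nu_q$ is the unique, hence ergodic, equilibrium state of $\phi_{q,T(q)}$, Birkhoff's theorem evaluates both limits by the corresponding integrals, giving $\delta_{\nu,x} = -\int g\,d\nu_q \big/ \int \log|\Jac f|\,d\nu_q = \alpha(q)$ for $\nu_q$-a.e.\ $x$. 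Thus $\nu_q(J_\nu(\alpha(q))) = 1$, and so $S_\nu(\alpha(q)) \ge \VD_{f,\nu_q}(J_\nu(\alpha(q))) = \hat{S}_\nu(\alpha(q))$.

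For the upper bound, let $\mu \in \cM^+_{J_\nu(\alpha(q))}(f)$ be arbitrary. By the measure-theoretic volume dimension formula of \cite{BHMane}, since $\mu$ gives full mass to the invariant uniformly expanding set $J_\nu(\alpha(q))$, one has $\VD_{f,\mu}(J_\nu(\alpha(q))) = h_\mu \big/ \int \log|\Jac f|\,d\mu$, the denominator being positive by uniform expansion. Because $\delta_{\nu,x} = \alpha(q)$ holds $\mu$-a.e., passing to the ergodic decomposition and using that $\alpha(q)$ is a fixed constant yields the linear constraint $\int g\,d\mu = -\alpha(q)\int \log|\Jac f|\,d\mu$. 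Feeding this into the variational inequality $h_\mu + q\int g\,d\mu - T(q)\int \log|\Jac f|\,d\mu \le \mathcal P(\phi_{q,T(q)}) = 0$ gives $h_\mu \le \big(T(q) + q\alpha(q)\big)\int \log|\Jac f|\,d\mu$, hence $\VD_{f,\mu}(J_\nu(\alpha(q))) \le T(q) + q\alpha(q) = \hat{S}_\nu(\alpha(q))$. Taking the supremum over $\mu$ yields $S_\nu(\alpha(q)) \le \hat{S}_\nu(\alpha(q))$, and the identity follows.

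The main obstacle I anticipate is the lower-bound step: one must pin down precisely the ergodic formula for $\delta_{\nu,x}$ supplied by \cite{BHMane} and confirm that it collapses to the Birkhoff ratio $-\int g\,d\nu_q/\int\log|\Jac f|\,d\nu_q$ under the Gibbs property of $\nu$, rather than to some distorted non-conformal quantity peculiar to $\mathbb P^k$ with $k \ge 2$. A secondary point is justifying the measure-theoretic identity $\VD_{f,\mu}(J_\nu(\alpha(q))) = h_\mu/\int\log|\Jac f|\,d\mu$ uniformly over $\cM^+_{J_\nu(\alpha(q))}(f)$, together with the observation that measures not charging the full level set cannot contribute to the supremum; it is here that the invariance and uniform expansion of $J_\nu(\alpha(q))$, and hence the applicability of the volume-dimension machinery of \cite{BHMane}, are essential.
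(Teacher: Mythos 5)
Your lower bound is essentially the paper's own argument (Lemmas \ref{lem_2} and \ref{lem_3_part1}: $\nu_q(J_\nu(\alpha(q)))=1$ via the Gibbs property and Birkhoff's theorem), and it correctly yields $S_\nu(\alpha(q))\ge \hat S_\nu(\alpha(q))$. The gap is in your upper bound. You claim that for \emph{every} $\mu\in\cM^+_{J_\nu(\alpha(q))}(f)$ one has $\VD_{f,\mu}(J_\nu(\alpha(q))) = h_\mu\big/\int\log|\Jac f|\,d\mu$, citing the measure formula of \cite{BHMane}. That formula (Theorem \ref{thm_BHMane_1.1} here) computes $\VD_f(\mu)$, which is by definition the \emph{infimum} of $\VD_{f,\mu}(Y)$ over Borel sets $Y$ with $\mu(Y)=1$. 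The fact that $J_\nu(\alpha(q))$ has full $\mu$-measure therefore gives only the one-sided inequality $\VD_{f,\mu}(J_\nu(\alpha(q)))\ge \VD_f(\mu)=h_\mu/L_\mu$, which points the wrong way for your purposes. Equality is genuinely false: $\VD_{f,\mu}(Y)$ is a Carath\'eodory-type dimension of the whole set $Y$, with $\mu$ entering only through the covering sets $U(x,\epsilon,\kappa,N)$; for instance, when all Lyapunov exponents are equal it equals $\frac1k\HD(J_\nu(\alpha(q)))$ for every such $\mu$, whereas $h_\mu/L_\mu$ can be made strictly smaller by choosing $\mu$ of small entropy on the level set. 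Consequently your argument, run with the correct inequality, bounds $\sup_\mu \VD_f(\mu)$ --- which is the conditional variational principle of Corollary \ref{thm_main_4} --- but it does not bound $S_\nu(\alpha(q))=\sup_\mu \VD_{f,\mu}(J_\nu(\alpha(q)))$, which is the quantity in the theorem.

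What closes this gap in the paper is a different key input: since $J_\nu(\alpha(q))$ is $f$-invariant (Remark \ref{rmk_Jnu_invariant}) and hence uniformly expanding, the Bowen-type formula \cite[Theorem 5.6]{BHMane} applies to the \emph{set} and gives $\VD_f(J_\nu(\alpha(q)))=\VD_f(\mu_q)$, where $\mu_q$ is the $\delta_{J_\nu(\alpha(q))}$-volume-conformal measure on the level set. Lemma \ref{lem_comp_lvd} then shows $\delta_{J_\nu(\alpha(q))}=T(q)+q\alpha(q)$ and, via vanishing free energy and uniqueness of equilibrium states, that $\mu_q=\nu_q$; hence $S_\nu(\alpha(q))=\VD_f(\nu_q)=T(q)+q\alpha(q)=\hat S_\nu(\alpha(q))$. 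In other words, the supremum of set-dimensions over $\mu$ must be controlled by the conformal-measure theorem for expanding sets, not by applying the entropy-over-Lyapunov formula to each individual $\mu$; without that (or some substitute argument bounding $\VD_{f,\mu}$ of the set itself), your upper bound does not go through.
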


The above theorems imply that if $\alpha \in [\alpha_1,\alpha_2]$, then the set $J_\nu(\alpha)$ is non-empty. Our next theorem gives a converse, namely, if $\alpha \notin [\alpha_1,\alpha_2]$, then
$J_{\nu}(\alpha) = \emptyset$. In other words, the $S_\nu(\alpha)$-spectrum is {\it complete}. Completeness of the fine Hausdorff dimension spectrum has been proved by Schmeling \cite{Sch99}; see also \cite[Theorem 9.2.5]{PU} and \cite[Theorem 21.2]{Pesin97book}.
\begin{theorem} \label{thm_main_2}
Let $f$ and $\nu$ be as in Theorem \ref{thm_main_1}.
Then the $S_\nu(\alpha)$-spectrum (resp. $\hat{S}_\nu(\alpha)$-spectrum) is complete; namely, we have
$$\alpha_1 = \inf_{x \in X} \underline{\delta}_{\nu,x} \text{~ and ~ } \alpha_2 = \sup_{x \in X} \overline{\delta}_{\nu,x}.$$
In particular, $J_{\nu}(\alpha) = \emptyset$ if and only if $\alpha \notin [\alpha_1,\alpha_2]$.
\end{theorem}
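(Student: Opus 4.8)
The plan is to reduce the statement to a purely thermodynamic estimate on invariant measures and then match the two extreme values using the family $\nu_q$. First I would record the pointwise description of the local volume dimension. By the Gibbs property of the equilibrium state $\nu$ (recall $\mathcal{P}(g)=0$) together with the definition of the local volume dimension from \cite{BHMane} (the same computation already used to obtain $\delta_{\nu,x}=\alpha(q)$ on $J_\nu(\alpha(q))$), one has, writing $S_n\varphi(x)=\sum_{j=0}^{n-1}\varphi(f^jx)$,
\[
\underline{\delta}_{\nu,x}=\liminf_{n\to\infty}\frac{-S_ng(x)}{S_n\log|\Jac f|(x)},\qquad \overline{\delta}_{\nu,x}=\limsup_{n\to\infty}\frac{-S_ng(x)}{S_n\log|\Jac f|(x)}.
\]
Since $X$ is uniformly expanding, $\log|\Jac f|$ is continuous and bounded below by a positive constant on $X$, so the denominators $\tfrac1n S_n\log|\Jac f|(x)$ stay in a fixed compact subinterval of $(0,\infty)$. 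Passing to a subsequence realizing $\overline{\delta}_{\nu,x}$ along which the empirical measures $\frac1n\sum_{j=0}^{n-1}\delta_{f^jx}$ converge weak-$*$ to an $f$-invariant measure $\mu$, both numerator and denominator converge, so $\overline{\delta}_{\nu,x}=R(\mu)$, where $R(\mu):=-\big(\int_X g\,d\mu\big)\big/\big(\int_X\log|\Jac f|\,d\mu\big)$; the analogous statement holds for $\underline{\delta}_{\nu,x}$. Thus it suffices to prove that $\alpha_1\le R(\mu)\le\alpha_2$ for every $f$-invariant probability measure $\mu$ on $X$.

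Second, I would obtain these bounds from the variational principle applied to the zero-pressure potentials $\phi_{q,T(q)}=qg-T(q)\log|\Jac f|$. For every invariant $\mu$ and every $q\in\mathbb R$,
\[
0=\mathcal{P}(\phi_{q,T(q)})\ge h_\mu+q\!\int_X g\,d\mu-T(q)\!\int_X\log|\Jac f|\,d\mu,
\]
and dropping the nonnegative entropy term and dividing by $\int_X\log|\Jac f|\,d\mu>0$ gives $-q\,R(\mu)\le T(q)$. For $q>0$ this reads $R(\mu)\ge -T(q)/q$, and for $q<0$ it reads $R(\mu)\le -T(q)/q$. By Theorem \ref{thm_main_1} the function $T$ is convex with $T'(q)=-\alpha(q)$; since $\alpha(q)$ is decreasing with $\lim_{q\to+\infty}\alpha(q)=\alpha_1$ and $\lim_{q\to-\infty}\alpha(q)=\alpha_2$, convexity yields $-T(q)/q\to\alpha_1$ as $q\to+\infty$ and $-T(q)/q\to\alpha_2$ as $q\to-\infty$. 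Letting $q\to\pm\infty$ in the two inequalities gives $\alpha_1\le R(\mu)\le\alpha_2$, and combined with the first step this shows $\sup_{x}\overline{\delta}_{\nu,x}\le\alpha_2$ and $\inf_{x}\underline{\delta}_{\nu,x}\ge\alpha_1$.

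Third, I would match the extremes. Each $\nu_q$ is an ergodic equilibrium state with $R(\nu_q)=\alpha(q)$, so by Birkhoff's theorem $\nu_q$-almost every $x$ is generic and satisfies $\delta_{\nu,x}=\alpha(q)$; in particular $J_\nu(\alpha(q))\neq\emptyset$ and $\overline{\delta}_{\nu,x}=\underline{\delta}_{\nu,x}=\alpha(q)$ there. Hence $\sup_x\overline{\delta}_{\nu,x}\ge\sup_q\alpha(q)=\alpha_2$ and $\inf_x\underline{\delta}_{\nu,x}\le\inf_q\alpha(q)=\alpha_1$, which together with the previous step gives the two equalities $\alpha_2=\sup_x\overline{\delta}_{\nu,x}$ and $\alpha_1=\inf_x\underline{\delta}_{\nu,x}$. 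The ``in particular'' then follows: if $\alpha>\alpha_2$ (resp. $\alpha<\alpha_1$) no $x$ can have $\delta_{\nu,x}=\alpha$, since existence of the limit would force $\overline{\delta}_{\nu,x}=\alpha\le\alpha_2$ (resp. $\underline{\delta}_{\nu,x}=\alpha\ge\alpha_1$); and if $\alpha\in[\alpha_1,\alpha_2]$ then, $\alpha(\cdot)$ being continuous with range $[\alpha_1,\alpha_2]$, we have $\alpha=\alpha(q)$ for some $q$ and $J_\nu(\alpha)\neq\emptyset$. Since both spectra are built on the same level sets $J_\nu(\alpha)$, completeness holds simultaneously for $S_\nu$ and $\hat S_\nu$.

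Finally, the main obstacle. Everything after the first step is soft, being the variational principle together with elementary convex analysis, so the real work is the pointwise ratio formula for the local volume dimension in the \emph{non-conformal} setting: one must control the $\nu$-measure of the relevant volume balls by $e^{S_ng}$ and the volume scale by $e^{-S_n\log|\Jac f|}$, uniformly in $x$ and $n$, which is exactly where the absence of a Koebe-type distortion theorem bites and where the volume dimension machinery of \cite{BHMane} must be invoked. If that identity is already established in the course of proving Theorem \ref{thm_main_1}, the present theorem reduces to the two short arguments above.
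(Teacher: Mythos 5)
Your proposal is correct, and its overall skeleton is the same as the paper's: both arguments (i) identify $\underline{\delta}_{\nu,x}$ and $\overline{\delta}_{\nu,x}$ with the lower and upper limits of the Birkhoff ratio $-S_ng(x)/S_n\log|\Jac f|(x)$ via the Gibbs property --- this is the content of Lemma \ref{lem_3_part1} (whose proof, through Lemma \ref{lem_4.3}, also yields the $\liminf$/$\limsup$ version you need, and which the paper invokes in exactly this way), and you correctly isolate it as the only place where the non-conformal machinery of \cite{BHMane} enters; (ii) pass from a point realizing an extreme value to an $f$-invariant measure by weak-$*$ limits of empirical measures; and (iii) realize each value $\alpha(q)$ at $\nu_q$-generic points, which is Proposition \ref{prop_lem3}. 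The genuine difference is the intermediate thermodynamic estimate. The paper proves that $\alpha_1$ and $\alpha_2$ are the extremes of $R(\rho)=-\int g\,d\rho\big/\int\log|\Jac f|\,d\rho$ only over \emph{ergodic} $\rho$ (Lemma \ref{lem_alpha12}, resting on Lemma \ref{thm_main_var_temp} and the bound $\VD_f\le 2$), and must then handle the possibility that the empirical limit is non-ergodic by approximating with periodic measures. You instead drop the nonnegative entropy term in $0\ge h_\mu+q\int g\,d\mu-T(q)\int\log|\Jac f|\,d\mu$, valid for \emph{every} invariant $\mu$, and compute $\lim_{q\to\pm\infty}\bigl(-T(q)/q\bigr)=\alpha_1,\alpha_2$ from convexity of $T$ and $T'=-\alpha$; this handles non-ergodic limits for free and needs neither Lemma \ref{thm_main_var_temp}, nor $\VD_f\le2$, nor density of periodic measures --- a genuine streamlining. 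Two minor caveats: $\log|\Jac f|$ need not be pointwise bounded below by a positive constant on $X$ (uniform expansion only gives $S_n\log|\Jac f|(x)\ge cn-C'$ with $c>0$), but this still pins the averages $\tfrac1n S_n\log|\Jac f|(x)$ in a compact subinterval of $(0,\infty)$ for large $n$, which is all your argument uses; and, exactly as in the paper, the endpoint cases $\alpha\in\{\alpha_1,\alpha_2\}$ of the ``in particular'' claim are covered only when these values are actually attained by $\alpha(q)$.
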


\medskip

Let $(\Sigma_A^+,\sigma)$ be the symbolic coding of $f \colon X \to X$. Now we define a {\it symbolic} multifractal decomposition of $X$ and {\it symbolic} (parametrized) volume dimension spectrum. In the spirit of Barreira-Saussol \cite{BS01}, 
for each $q \in \mathbb R$, we define the symbolic level set $\hat{X}_q$ using the ratio of Birkhoff sums as
\begin{equation*}
\hat{X}_{q} :=\left\{\omega \in \Sigma_A^+ :
\lim_{n\to\infty}\frac{\sum_{k=0}^{n-1} g(\pi(\sigma^k(\omega)))}{\sum_{k=0}^{n-1}  \log|\Jac f(\pi(\sigma^k(\omega)))|^{-1}} = \alpha(q) \right\}.
\end{equation*}
We define the {\it symbolic parametrized volume dimension multifractal spectrum} $\hat{\mathcal{F}}_\nu \colon \mathbb R \to \mathbb R$ by
$$\hat{\mathcal{F}}_\nu(\alpha(q)) := \VD_{f,\nu_q}(\pi(\hat{X}_{q})).$$
We observe that $\pi(\hat{X}_{q})$ is an uniformly expanding set. 
We define the {\it symbolic fine volume dimension multifractal spectrum} $\mathcal{F}_\nu \colon \mathbb R \to \mathbb R$ by
$$\mathcal{F}_\nu(\alpha(q)) := \VD(\pi(\hat{X}_{q})) =\displaystyle\sup_{\mu \in \mathcal{M}^+_{\pi(\hat{X}_{q})}(f)}\VD_{f,\mu}(\pi(\hat{X}_{q})).$$

Our next theorem gives the multifractal analysis for $\hat{\mathcal{F}}_\nu(\alpha(q))$ and $\mathcal{F}_\nu(\alpha(q))$. 

\begin{theorem}\label{thm_main_3}
Let $f$ and $\nu$ be as in Theorem \ref{thm_main_1}. Then Theorem \ref{thm_main_1} (2) and (3) hold for both $\hat{\mathcal{F}}_\nu(\alpha(q))$ and $\mathcal{F}_\nu(\alpha(q))$.
\end{theorem}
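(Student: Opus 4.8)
The plan is to reduce Theorem \ref{thm_main_3} to the already-established Theorems \ref{thm_main_1} and \ref{thm_main_1.5} by identifying the projected symbolic level set $\pi(\hat{X}_q)$ with the local volume dimension level set $J_\nu(\alpha(q))$, and in any case showing that the two sets carry exactly the same invariant measures. The bridge between the symbolic description and the geometric one is the identity
$$\delta_{\nu,x} = \lim_{n\to\infty}\frac{\sum_{k=0}^{n-1} g(f^k x)}{\sum_{k=0}^{n-1} \log|\Jac f(f^k x)|^{-1}},$$
valid whenever the right-hand limit exists, which follows from the Gibbs property of the equilibrium state $\nu$ of $g$ (so that $\nu$ of a small dynamical ball at $x$ behaves like $\exp\sum g(f^k x)$) together with the fact that the volume scale along the orbit is governed by $|\Jac f^n(x)| = \prod_{k=0}^{n-1}|\Jac f(f^k x)|$, using the volume-lemma estimates of \cite{BHMane}.

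First I would exploit the semiconjugacy $\pi\circ\sigma = f\circ\pi$ to observe that the Birkhoff sums defining $\hat{X}_q$ depend only on $x=\pi(\omega)$: indeed $g(\pi(\sigma^k\omega)) = g(f^k(\pi\omega))$ and likewise for $\log|\Jac f|$, so $\hat{X}_q = \pi^{-1}(\pi(\hat{X}_q))$ and
$$\pi(\hat{X}_q) = \Big\{x\in X:\ \lim_{n\to\infty}\tfrac{\sum_{k=0}^{n-1}g(f^kx)}{\sum_{k=0}^{n-1}\log|\Jac f(f^kx)|^{-1}} = \alpha(q)\Big\}.$$
Combining this with the displayed identity for $\delta_{\nu,x}$ gives $\pi(\hat{X}_q) = J_\nu(\alpha(q))$, so that $\hat{\mathcal{F}}_\nu$ and $\mathcal{F}_\nu$ coincide verbatim with $\hat{S}_\nu$ and $S_\nu$ and the theorem is immediate from Theorems \ref{thm_main_1} and \ref{thm_main_1.5}.

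Should the geometric local volume dimension agree with the dynamical ratio only $\mu$-almost everywhere rather than pointwise, I would instead argue measure-theoretically, which in any case is the natural route for the fine spectrum. For the parametrized spectrum, Birkhoff's ergodic theorem applied to the ergodic measure $\nu_q$ together with the definition of $\alpha(q)$ gives $\nu_q(\pi(\hat{X}_q)) = 1$; since the volume dimension of a set with respect to a measure of full support on it equals the volume dimension of that measure, $\hat{\mathcal{F}}_\nu(\alpha(q)) = \VD_{f,\nu_q}(\pi(\hat{X}_q)) = \VD_f(\nu_q) = T(q)+q\alpha(q)$ by Theorem \ref{thm_main_1}(2), with part (3) inherited from the Legendre structure of $T$. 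For the fine spectrum, the lower bound $\mathcal{F}_\nu(\alpha(q))\ge T(q)+q\alpha(q)$ follows by taking $\mu=\nu_q$ in the supremum. For the matching upper bound, every $\mu\in\cM^+_{\pi(\hat{X}_q)}(f)$ is supported on points with Birkhoff ratio $\alpha(q)$, so by the ergodic theorem it satisfies the linear constraint $\int_X g\,d\mu = -\alpha(q)\int_X\log|\Jac f|\,d\mu$; feeding this into the variational inequality $h_\mu(f)+\int_X\phi_{q,T(q)}\,d\mu \le \mathcal{P}(\phi_{q,T(q)}) = 0$ and dividing by $\int_X\log|\Jac f|\,d\mu>0$ yields $\VD_{f,\mu}(\pi(\hat{X}_q)) = h_\mu(f)/\int_X\log|\Jac f|\,d\mu \le T(q)+q\alpha(q)$, with equality exactly at $\mu=\nu_q$.

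The main obstacle is the first step, namely establishing the identity between the geometric local volume dimension $\delta_{\nu,x}$ of \cite{BHMane} and the purely dynamical Birkhoff ratio. This is precisely where non-conformality bites: without a Koebe-type theorem one cannot pass freely between round balls and dynamically defined balls, so the argument must rely on the distortion control and volume estimates engineered in \cite{BHMane}, which apply here because $X$ is uniformly expanding. Once this identification is in hand, whether pointwise or merely $\mu$-almost everywhere for all relevant $\mu$, the remainder is a routine transcription of the pressure and Legendre-transform computations already carried out for Theorems \ref{thm_main_1} and \ref{thm_main_1.5}.
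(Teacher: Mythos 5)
Your primary route is essentially the paper's own proof: the paper establishes your ``bridge identity'' as Lemma \ref{lem_3_part1} ($\pi(\hat{X}_q) = J_\nu(\alpha(q))$, proved via the Gibbs property \eqref{eq_gibbs_nu} and the time-scale accounting of Lemma \ref{lem_4.3}), after which its proof of Theorem \ref{thm_main_3} is exactly your observation that $\hat{\mathcal{F}}_\nu$ and $\mathcal{F}_\nu$ coincide with $\hat{S}_\nu$ and $S_\nu$, so that Theorems \ref{thm_main_1} and \ref{thm_main_1.5} (packaged in the paper as Corollary \ref{cor_hat_X} and Lemma \ref{lem_comp_lvd}) apply verbatim. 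So your main plan is correct and matches the paper.

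One warning about your fallback argument, which, as written, would not survive scrutiny. The claim that $\nu_q(\pi(\hat{X}_q))=1$ alone gives $\VD_{f,\nu_q}(\pi(\hat{X}_q)) = \VD_f(\nu_q)$ is false in general: since $\VD_f(\nu_q)$ is defined as an \emph{infimum} over full-measure sets, full measure only yields $\VD_{f,\nu_q}(\pi(\hat{X}_q)) \ge \VD_f(\nu_q)$, and the reverse inequality requires the pointwise statement $\delta_{\nu_q,x} = T(q)+q\alpha(q)$ for every $x$ in the set together with the Billingsley-type result \cite[Proposition 4.22]{BHMane}; this is precisely the content of the paper's Lemma \ref{lem_3_part2}. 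The same issue recurs in your upper bound for the fine spectrum, where you replace $\VD_{f,\mu}(\pi(\hat{X}_q))$ by $h_\mu(f)/\int_X \log|\Jac f|\,d\mu = \VD_f(\mu)$ for an arbitrary $\mu\in\mathcal{M}^+_{\pi(\hat{X}_q)}(f)$: the set-dimension relative to $\mu$ can strictly exceed the dimension of $\mu$, so this substitution needs justification. The paper closes this gap differently, by showing the volume-conformal measure $\mu_q$ on the expanding set $J_\nu(\alpha(q))$ equals $\nu_q$ (Lemma \ref{lem_comp_lvd}) and invoking the Bowen-type formula \cite[Theorem 5.6]{BHMane} to equate $\VD_f(J_\nu(\alpha(q)))$ with $\VD_f(\nu_q)$. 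Since your primary route goes through, these gaps are harmless for the proposal as a whole, but the measure-theoretic shortcut should not be relied upon.
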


\smallskip 

As a collorary of the above theorems, we obtain the {\it conditional variational principle} for the dimension multifractal spectra.
\begin{corollary} \label{thm_main_4}
Let $f$ and $\nu$ be as in Theorem \ref{thm_main_1}. Then the conditional variational principle holds; that is, for any $\alpha \in [\alpha_1,\alpha_2]$, we have
\begin{equation}
\hat{S}_{\nu}(\al) = \sup\{\VD_f(\rho)\;|\; \rho\in \mathcal{M}_{J_{\nu}(\alpha(q))}^+(f)\}
\end{equation}

The conditional variational principle also holds for $S_{\nu}(\al)$, $\hat{\mathcal{F}}_{\nu}(\al)$ and $\mathcal{F}_{\nu}(\al)$.
\end{corollary}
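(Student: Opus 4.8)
The plan is to read off the conditional variational principle from the identity $\hat{S}_\nu(\alpha(q)) = \VD_f(\nu_q) = T(q) + q\alpha(q)$ of Theorem~\ref{thm_main_1}(2): I would exhibit $\nu_q$ as a maximizer of the supremum and bound every competing measure from above using the variational principle for the pressure. Two inputs drive the argument. First, the volume dimension of an invariant measure is $\VD_f(\rho) = \frac{h_\rho(f)}{\int_X \log|\Jac f|\,d\rho}$ (from \cite{BHMane}), a ratio of quantities that are each affine in $\rho$. Second, the Gibbs property of the equilibrium state $\nu$ identifies the local volume dimension with a ratio of Birkhoff sums: writing $S_n\psi(x) = \sum_{j=0}^{n-1}\psi(f^j(x))$, one has $\delta_{\nu,x} = \lim_{n\to\infty} \frac{S_n g(x)}{S_n\log|\Jac f|^{-1}(x)}$ at every point where the limit exists, which is exactly the ratio appearing in the definition of the symbolic level set $\hat{X}_q$.

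For the inequality $\hat{S}_\nu(\alpha) \le \sup\{\VD_f(\rho)\}$ I would check that $\nu_q$ is itself admissible. Since $\nu_q$ is the equilibrium state of a H\"older potential on a uniformly expanding set it is ergodic, so Birkhoff's theorem gives, for $\nu_q$-a.e.\ $x$, $\tfrac1n S_n g(x) \to \int_X g\,d\nu_q$ and $\tfrac1n S_n\log|\Jac f|^{-1}(x) \to -\int_X\log|\Jac f|\,d\nu_q$; dividing shows $\delta_{\nu,x} = \alpha(q)$ for $\nu_q$-a.e.\ $x$, hence $\nu_q(J_\nu(\alpha(q))) = 1$ and $\nu_q \in \mathcal{M}^+_{J_\nu(\alpha(q))}(f)$. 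Combined with Theorem~\ref{thm_main_1}(2) this yields the claimed lower bound on the supremum.

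For the reverse inequality, take any $\rho \in \mathcal{M}^+_{J_\nu(\alpha(q))}(f)$. Its ergodic components almost all charge the invariant measurable set $J_\nu(\alpha(q))$ fully, and for an ergodic measure on this level set Birkhoff's theorem forces $-\frac{\int_X g\,d\rho}{\int_X \log|\Jac f|\,d\rho} = \alpha(q)$; integrating over the ergodic decomposition gives the linear constraint $\int_X g\,d\rho = -\alpha(q)\int_X\log|\Jac f|\,d\rho$ for $\rho$ itself. Feeding this into the variational inequality $h_\rho(f) + q\int_X g\,d\rho - T(q)\int_X\log|\Jac f|\,d\rho \le \mathcal{P}(\phi_{q,T(q)}) = 0$ and dividing by $\int_X\log|\Jac f|\,d\rho > 0$ gives $\VD_f(\rho) \le T(q) + q\alpha(q) = \hat{S}_\nu(\alpha(q))$. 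Taking the supremum proves the reverse inequality, and equality is attained at $\rho = \nu_q$, where the variational inequality is saturated. The statements for $S_\nu$, $\hat{\mathcal{F}}_\nu$ and $\mathcal{F}_\nu$ follow identically: by Theorems~\ref{thm_main_1.5} and \ref{thm_main_3} all four spectra share the common value $T(q)+q\alpha(q)$, and for the symbolic spectra one simply replaces $J_\nu(\alpha(q))$ by $\pi(\hat{X}_q)$, whose defining ratio condition yields the same linear constraint on any invariant measure supported there.

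I expect the main obstacle to be the passage from the geometric condition $\delta_{\nu,x} = \alpha(q)$ to the linear integral constraint on $\rho$. This rests on the Birkhoff-sum formula for $\delta_{\nu,x}$, which requires the bounded-distortion/Gibbs control of $\nu$ on dynamical balls together with the identification of the relevant volume scale with $|\Jac f^n|^{-1}$ supplied by the volume dimension theory of \cite{BHMane}; one must ensure this holds $\rho$-a.e.\ for every competitor $\rho$, not merely $\nu$-a.e., and must handle non-ergodic $\rho$ through the ergodic decomposition as above.
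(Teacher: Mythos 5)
Your proposal is correct and follows essentially the same route as the paper: exhibit $\nu_q$ as an admissible maximizer via $\hat{S}_\nu(\alpha(q)) = \VD_f(\nu_q) = T(q)+q\alpha(q)$ and $\nu_q(J_\nu(\alpha(q)))=1$, then bound any competitor $\rho$ by combining the Birkhoff-ratio constraint $-\int g\,d\rho \big/ \int \log|\Jac f|\,d\rho = \alpha(q)$ (the paper's equation \eqref{alphamu}, resting on Lemma \ref{lem_3_part1}) with the variational inequality for $\mathcal{P}(q,T(q))=0$. The only cosmetic difference is your ergodic-decomposition step for non-ergodic $\rho$, which is unnecessary here since $\mathcal{M}^+_{J_\nu(\alpha(q))}(f)$ consists of ergodic measures by definition, though it is harmless and slightly more general.
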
 

It is expected that the volume dimension theory developed in \cite{BHMane} would work in more general contexts, such as smooth endomorphisms of compact Riemannian manifolds with singularities or invariant measures with positive sum of Lyapunov exponents. Therefore, we expect that our multifractal analysis to work in this generality as well.

\subsection{Strategy of the proofs}
Recall that in the conformal case,
the local (Hausdorff) dimension is defined as
$$
    \delta_{\nu,x} := \lim_{r\to 0}\frac{\log \nu(B(x,r))}{\log r} ,
$$
where $\nu$ is the equilibrium state of a fixed potential $g$. We define the associated Hausdorff dimension spectrum function for this local dimension multifractal $S_{\nu}(\alpha(q)) := {\rm HD}(J_{\nu}(\alpha(q)))$.
In this case, the standard approach to the multifractal analysis of $\nu$ (see for example \cite{PesinWeiss97,Urbanski22}) requires that one compare the measure of the ball $B(x,r)$ and the measure of the projection $\pi([\om_1\dots \om_n])$ of a cylinder set of the symbolic coding. The utility of this comparison is that one may then find a {\it Moran cover}; a cover composed of sets in refinements of the Markov partition that determine a good enough cover of the set for dimension calculations; see for example \cite{PesinWeiss97}. Using tools from thermodynamic formalism, one can then derive the Legendre pair relationship between $S_\nu(\alpha(q))$ and the temperature function $T(q)$; namely, $S_{\nu}(\alpha(q)) = T(q)+q\alpha(q)$.
    
In our case, showing the existence of a Moran cover is unnecessary since for the local volume dimension, one is only concerned with the $U(x,\epsilon,\kappa,N)$ sets. These sets are the images of balls under local inverse branches, exactly the sets whose measure is described by the Gibbs property (see \cite[Definition 13.2.1]{Urbanski22}). For this reason, it would appear that we have trivialized our non-conformal analysis by choosing the local volume dimension. However, there are a few complications. First, the local volume dimension is a triple limit and so there is some accounting required here. Second, as we see in Proposition \ref{prop_lem3}, the result one would obtain from this standard approach only concludes on $\VD_{f,\nu_q}(J_\nu(\alpha(q)))$,
the volume dimension of the set with respect to $f$ and the measure $\nu_q$. Here the measure $\nu_q$ is dependent on the parameter $q$ and so our dimension function is changing with $q$, whereas in the Hausdorff dimension case it remains constant. Therefore, by following the standard approach alone, one does not recover a true multifractal analysis with respect to the volume dimension. 

Our goal, however, is to show something stronger.  
Let $g$ be a fixed potential with equilibrium state $\nu$ on $X$, where $X$ is an expanding set
of a holomorphic endomorphism $f$ of $\mathbb P^k$ algebraic degree at least $2$.
There is a multifractal decomposition of $X$ by level sets $J_{\nu}(\alpha(q))$ of local volume dimension. We show that the $f$-volume dimension spectrum function $S_{\nu}(\alpha(q)) := \VD_f(J_{\nu}(\alpha(q)))$ form a Legendre pair with the temperature function; namely, $S_{\nu}(\alpha(q)) = T(q)+q\alpha(q)$. 

In the introduction to volume dimension (see \cite{BHMane}), it is not shown that the local volume dimension is an $f$-invariant. Since a priori we lack knowledge of this invariance, we begin our analysis with $\hat{X}_{q}$, a level set in the symbolic space of the ratio of two Birkhoff sums. We show that $\pi(\hat{X}_{q}) = J_{\nu}(\alpha(q))$ and $\hat{X}_{q}$ is invariant. With the invariance of $J_{\nu}(\alpha(q))$ established, we then utilize \cite[Proposition 4.22]{BHMane} and the Gibbs property to complete our analogue to the standard local dimension analysis. From here our main tool is \cite[Theorem 5.6]{BHMane} which connects $\VD_f(X)$ to $\VD_f(\mu_X)$ where $\mu_X$ is a volume conformal measure. With this fact we establish that $\nu_q = \mu_{J_{\nu}(\alpha(q))}$ (see Lemma \ref{lem_comp_lvd}) and conclude that $\VD_{f,\nu_q}(J_{\nu}(\alpha(q))) = \VD_f(J_{\nu}(\alpha(q)))$. We also have a volume lemma for volume dimensions from \cite[Theorem 1.1]{BHMane} which allows us to retrieve a conditional variational principal for our local volume dimension spectrum.

\subsection{Organization of the paper} The paper is organized as follows. In Section \ref{sec_vd}, we give a brief account of volume dimensions as introduced in \cite{BHMane}. In Section \ref{sec_pres_temp}, we prove properties about the pressure and the temperature functions.
We prove Theorems \ref{thm_main_1}--\ref{thm_main_4} in Sections \ref{sec_main_1}--\ref{sec_main_4}, respectively. Finally, in Section \ref{sec_multi_decomp}, we study properties of sets in the multifractal decomposition of $X$.

\section{Volume dimensions}\label{sec_vd}
In this section, we give a brief overview of the local volume dimension and volume dimension for sets and measures as introduced in \cite{BHMane}.

\subsection{Holomorphic endomorphisms}
Let $f \colon \mathbb P^k \to \mathbb P^k$ be a holomorphic endomorphism of algebraic degree $d \ge 2$.
Recall that $X \subseteq \mathbb P^k$ is an {\it uniformly expanding} set if it is closed and $f$-invariant, and there exist $\eta > 1$ and $C>0$ such that
$||Df_x^n(v)|| > C\eta^n||v||$ for every $x \in X$, $v \in T_x \mathbb P^k$, and $n\in\mathbb N$.
We denote by $\mathcal{M}_X^+(f)$ the set of $f$-invariant ergodic probability measures supported on $X$. Since $X$ is uniformly expanding, we note that any $\nu \in \mathcal{M}_X^+(f)$ has strictly positive Lyapunov exponents. 

\medskip

For the rest of this section, we fix a holomorphic endomorphism $f \colon \mathbb P^k \to \mathbb P^k$, an uniformly expanding set $X$ and $\nu \in \mathcal{M}_X^+(f)$.

\subsection{Local volume dimension}\label{sec_lvd}
In this section, we recall the definition of the {\it local volume dimension} (which depends on $f$ and $\nu$) introduced by Bianchi and the second author in \cite{BHMane}.

A result by Berteloot-Dupont-Molino (see \cite{BD,BDM})
states that there exists an increasing (as $\epsilon\to 0$) measurable exhaustion $\{Z^\star_\nu (\epsilon)\}_\epsilon$ of a full-measure subset $Z^\star_\nu$ of the space of orbits for $f$ such that
 the preimages of sufficiently small balls along  orbits in $Z^\star_\nu(\epsilon)$ 
 are approximately ellipses, and the contraction rate for their volume
 is essentially given
  by $e^{-nL_\nu}$.

\begin{remark}
Since $X$ is uniformly expanding, we have $Z_\nu^*(\ep) = Z_\nu^*$ for any $\ep>0$. However, we keep the parameter $\ep$ in our analysis for the purpose of future generalization of $X$ to be non-uniformly expanding.
\end{remark}

Denote by $\pi\colon Z^\star_\nu\to \mathbb P^k$
the projection associating to any orbit $\hat z =\{z_n\}_{n\in \mathbb Z}$
its element $z_0$. It follows from further estimates developed in \cite{BHMane}, that for every small $\epsilon>0$, there exist some $r(\epsilon)$ and $n(\epsilon)$ such that, for all $x \in \pi(Z^\star_\nu(\epsilon))$, $0<\kappa<r(\epsilon)$, and $N\geq n(\epsilon)$, the neighbourhood $U=U(x,\epsilon,\kappa,N)$ of $x$
satisfying $$f^N (U) = B(f^N(x),\kappa\, e^{-NM\epsilon})$$ where $e^{M}$ is a bound for the expansion of $f$ and  $f^N|_{U}$ is injective is well-defined. We set
\begin{equation*}
	\delta_{\nu,x} (\epsilon,\kappa, N) := \frac{\log \nu(U(x,\epsilon,\kappa,N))}{\log \sqrt{{\rm Vol}(U(x,\epsilon,\kappa,N))}},
\end{equation*}
where $\rm Vol$ denotes the volume 
with respect to the Fubini-Study metric. 

\begin{remark} \label{rmk_change1}
The above definition of $\delta_{\nu,x}$ is different from the definition of $\delta_{\nu,x}$ in \cite{BHMane} by adding an extra square root in the denominator.
\end{remark}

\begin{definition}[{\cite{BHMane}}]\label{def_local_vd}
\begin{enumerate}
\item The {\it lower local volume dimension} $\underline{\delta}_{\nu,x}$ at $x$ is defined as 
$$\underline{\delta}_{\nu,x} := \liminf_{\epsilon \to 0} \liminf_{\kappa \to 0} \liminf_{N \to \infty}\delta_{\nu,x} (\epsilon,\kappa, N).$$
\item The {\it upper local volume dimension} $\overline{\delta}_{\nu,x}$ at $x$ is defined as 
$$\overline{\delta}_{\nu,x} := \limsup_{\epsilon \to 0} \limsup_{\kappa \to 0} \limsup_{N \to \infty}\delta_{\nu,x} (\epsilon,\kappa, N).$$
\item The {\it local volume dimension} $\delta_{\nu,x}$ at $x$ is defined as 
$$\delta_{\nu,x} := \lim_{\epsilon \to 0} \lim_{\kappa \to 0} \lim_{N \to \infty}\delta_{\nu,x} (\epsilon,\kappa, N)$$
if the limits exist.
\end{enumerate}
\end{definition}

\subsection{Volume dimension of sets}
We continue to use the notation from Section \ref{sec_lvd}. In this section, we recall the definition of volume dimension $\VD_{f,\nu}(X)$ of a set $X \subseteq \pi(Z_\nu^{\star}(\epsilon))$ introduced in \cite{BHMane}.

For every $X \subseteq \pi(Z^\star_\nu)$ and $\epsilon>0$, setting $X^\epsilon := X\cap \pi(Z^\star_\nu(\epsilon))$, we define the quantity
$\VD^\epsilon_{f,\nu} ( X^\epsilon)$ as
\[
\VD^\epsilon_{f,\nu} (X^\epsilon) :=
\sup \left\{ \alpha :  \Lambda^\epsilon_\alpha(X^\epsilon) = \infty\right\}=
\inf\left\{ \alpha : \Lambda^\epsilon_\alpha(X^\epsilon) = 0\right\},
\]
where
\[
\Lambda^\epsilon_\alpha (X^\epsilon)
:=
\lim_{\kappa\to 0}
\lim_{N^\star \to \infty} \inf_{\{U_i\}}
\sum_{i \ge 1} {\rm Vol}(U_i)^{\alpha/2}.
\]
Here the infimum is taken over the covers consisting of sets $U_i$ of the form $U_i=U(x,\ep,\kappa,N_i)$, for some $x\in \pi(Z_\nu(\epsilon))$ and $N_i \ge N^\star$.

\begin{remark}\label{rmk_change2}
The above definition of $\Lambda^\epsilon_\alpha (X^\epsilon)$ is different from the definition of $\Lambda^\epsilon_\alpha (X^\epsilon)$ in \cite{BHMane} by adding an extra square root in ${\rm Vol}(U_i)$.    
\end{remark}

\begin{definition}[{\cite{BHMane}}] \label{Def:VDIII}
For every $X\subseteq \pi(Z_\nu^{\star}(\epsilon))$, the {\it volume dimension with respect to $\nu$} of $X$ is 
    \begin{equation}
            \VD_{f,\nu}(X):= \limsup_{\ep\to 0}\VD_{f,\nu}^{\ep}(X^\epsilon).
    \end{equation}
\end{definition}

\begin{definition}[{\cite{BHMane}}]
The {\it volume dimension of a measure $\nu$} is given by 
\begin{equation*}
\VD_f(\nu):= \inf\{\VD_{f,\nu}(X)\;:\; X\subseteq\pi(Z_{\nu}^{\star}) {\rm ~Borel}, \nu(X)=1\}
\end{equation*}
\end{definition}

\begin{definition}[{\cite{BHMane}}] \label{def_vd_expandingset}
Let $X\subseteq \PP^k$ be uniformly expanding. The {\it volume dimension of $X$} is 
\begin{equation*}
\VD(X) := \sup_{\nu \in \mathcal{M}_X^+(f)} \VD_{f,\nu}(X).
\end{equation*}
\end{definition}

\begin{theorem}[{\cite[Theorem 1.1]{BHMane}}]\label{thm_BHMane_1.1}
Let $k \ge 1$ be an integer.
Let $f \colon \mathbb P^k \to \mathbb P^k$ be a holomorphic endomorphism of algebraic degree $d \ge 2$.
For every $\nu\in \mathcal M^+(f)$ we have 
$$\VD_f(\nu) = \frac{h_{\nu}}{L_{\nu}}$$
where $h_\nu$ denotes the entropy of $f$ with respect to $\nu$ and $L_\nu$ denotes the sum of the Lyapunov exponents of $\nu$.
\end{theorem}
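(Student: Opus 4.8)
The plan is to first establish the pointwise identity $\delta_{\nu,x} = h_\nu/L_\nu$ for $\nu$-almost every $x$, and then to upgrade this almost-everywhere statement to the global equality $\VD_f(\nu) = h_\nu/L_\nu$ by a covering argument carried out inside the restricted class of $U$-shaped covers. Since $\delta_{\nu,x}(\epsilon,\kappa,N)$ is a ratio of two logarithms, it suffices to control the two asymptotics $\frac1N\log\nu(U(x,\epsilon,\kappa,N))$ and $\frac1N\log\sqrt{\mathrm{Vol}(U(x,\epsilon,\kappa,N))}$ separately and take their quotient, tracking the triple limit in $(\epsilon,\kappa,N)$ throughout.

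For the numerator I would invoke the Brin--Katok local entropy theorem. The defining relation $f^N(U) = B(f^N(x),\kappa\,e^{-NM\epsilon})$, together with uniform expansion along orbits in $\pi(Z^\star_\nu(\epsilon))$, shows that $U(x,\epsilon,\kappa,N)$ is comparable to a Bowen $(N,\rho)$-ball of scale $\rho \asymp \kappa\,e^{-NM\epsilon}$: the intermediate images $f^j(U)$ remain small nested balls and $f^N|_U$ is injective. Hence, for $\nu$-a.e.\ $x$, ergodicity gives $\lim_{N\to\infty}\frac1N\log\nu(U(x,\epsilon,\kappa,N)) = -h_\nu$, with the value stable as $\kappa\to0$ and $\epsilon\to0$. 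For the denominator, the key input is the Berteloot--Dupont--Molino description recalled above: along such orbits the preimages of balls are approximate ellipsoids, and by the bounded-distortion estimates of \cite{BHMane} the real volume distortion of $f^N$ over $U$ is essentially constant and equal to the real Jacobian $|\Jac f^N(x)|^2$. The change-of-variables formula in real dimension $2k$ then yields $\mathrm{Vol}(U) \asymp (\kappa\,e^{-NM\epsilon})^{2k}\,|\Jac f^N(x)|^{-2}$. By Oseledets' theorem and the Birkhoff ergodic theorem, $\frac1N\log|\Jac f^N(x)| \to L_\nu$ for $\nu$-a.e.\ $x$, so taking the square root --- precisely the role of the extra square roots introduced in Remark \ref{rmk_change1} and Remark \ref{rmk_change2} --- gives $\frac1N\log\sqrt{\mathrm{Vol}(U)} \to -kM\epsilon - L_\nu$, which tends to $-L_\nu$ as $\epsilon\to0$. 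Dividing the two asymptotics produces $\delta_{\nu,x} = h_\nu/L_\nu$ almost surely.

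Finally, to pass from this pointwise value to $\VD_f(\nu)$, I would run a Billingsley/Frostman-type argument within the volume-dimension covering class. For the lower bound I would use the mass distribution principle: on a set of measure close to $1$ the estimate $\nu(U) \le \mathrm{Vol}(U)^{(\alpha - o(1))/2}$ forces $\Lambda^\epsilon_\alpha = \infty$ whenever $\alpha < h_\nu/L_\nu$, so $\VD_{f,\nu}(X) \ge h_\nu/L_\nu$ for every Borel $X$ with $\nu(X)=1$. For the upper bound I would build economical covers by $U$-sets on which $\nu(U) \ge \mathrm{Vol}(U)^{(\alpha + o(1))/2}$ for $\alpha > h_\nu/L_\nu$, bounding the covering multiplicity by a Besicovitch-type lemma for the ellipsoidal $U$-sets, which gives $\VD_{f,\nu}(X) \le h_\nu/L_\nu$ on a suitable full-measure set; the infimum in the definition of $\VD_f(\nu)$ then equals $h_\nu/L_\nu$. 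I expect the main obstacle to lie in this last step: making the triple limit in $(\epsilon,\kappa,N)$ interact correctly with the covering bounds and establishing the bounded-overlap property for covers by the non-round sets $U(x,\epsilon,\kappa,N)$. It is exactly the restricted shape of the admissible covers that must substitute here for the Moran-cover and Koebe-distortion machinery that is unavailable in the non-conformal setting.
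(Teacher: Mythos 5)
A point of order first: the paper contains no proof of Theorem \ref{thm_BHMane_1.1} at all. It is imported wholesale from \cite[Theorem 1.1]{BHMane}; the present paper's only contribution is the renormalization (the factor of $2$ in the denominator of the original statement disappears because of the square roots inserted into the definitions of $\delta_{\nu,x}$ and $\Lambda^\epsilon_\alpha$; see Remarks \ref{rmk_change1} and \ref{rmk_change2} and the remark immediately following the theorem). So your proposal can only be judged on its own terms and against \cite{BHMane}. On those terms, your outline is the natural one and, in spirit, the standard one for a ``volume lemma'' of this kind: a pointwise statement $\delta_{\nu,x}=h_\nu/L_\nu$ for $\nu$-a.e.\ $x$, obtained by playing Brin--Katok (the correct tool, since the statement concerns arbitrary measures in $\mathcal M^+(f)$, for which no Gibbs property is available) against the volume asymptotics $\mathrm{Vol}(U)\asymp(\kappa e^{-NM\epsilon})^{2k}\,|\Jac f^N(x)|^{-2}$, followed by a Billingsley/Frostman upgrade to $\VD_f(\nu)$. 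Your bookkeeping of the triple limit, and of the role the square roots play in producing $L_\nu$ rather than $2L_\nu$, is correct.

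The genuine gap is exactly where you suspected it, but it is worse than an obstacle to be ``made to interact correctly'': there is no Besicovitch-type lemma for the ellipsoidal $U$-sets. Besicovitch covering theorems require uniformly bounded eccentricity, whereas the eccentricity of $U(x,\epsilon,\kappa,N)$ grows exponentially in $N$ at a rate governed by the spread of the Lyapunov exponents; this unbounded eccentricity is precisely what non-conformality means here, so the covering lemma you invoke is false for this family. The repair needs no new covering theorem: fix $N$, observe that the images $f^N(U_i)$ are genuine round balls of comparable radii $\approx\kappa e^{-NM\epsilon}$, apply Besicovitch (or a Vitali argument) downstairs in $\mathbb P^k$, and pull the resulting bounded-multiplicity cover back through the inverse branches of $f^N$; injectivity of each branch and disjointness of distinct branches preserve the multiplicity bound upstairs. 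Two smaller holes should also be filled: for the upper bound you need the pointwise estimates to hold uniformly, i.e.\ an Egorov argument on sets of measure $1-\delta$ together with countable stability of $\Lambda^\epsilon_\alpha$ to recover the full-measure sets appearing in the infimum defining $\VD_f(\nu)$; and in the mass distribution step the covering sets are centered at arbitrary points of $\pi(Z^\star_\nu(\epsilon))$, not in your good set, so you need the standard doubling trick (a $U(x,\epsilon,\kappa,N)$ meeting the good set at $y$ is contained in $U(y,\epsilon,2\kappa,N)$) before the pointwise bound can be applied to every set in an admissible cover.
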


\begin{remark}
The above theorem is different from \cite[Theorem 1.1]{BHMane} by a factor of 2 in the denominator, due to the changes in the definition of (local) volume dimensions; see Remarks \ref{rmk_change1} and \ref{rmk_change2}.
\end{remark}

\subsection{Multifractal decompositions of $X$}
We define
\begin{equation}\label{Def:Jal}
{\rm for ~any~} \alpha \ge 0, J_\nu(\alpha) := \{x \in X : \delta_{x} = \alpha \} \text{ and } J_\nu' := \{x \in X : \delta_{x} {\rm ~does ~not ~exist}. \}.
\end{equation}
Then we have a multifractal decomposition of $X$, i.e., $X = J'_\nu \cup \left(\bigcup_{\alpha \ge 0} J_\nu(\alpha) \right).$

In Section \ref{sec_multi_decomp}, we will show that each set $J_\nu(\alpha)$, if it is non-empty, is dense in $X$. Moreover, the set $J_\nu'$ is non-empty.

\section{Pressure and temperature} \label{sec_pres_temp}
In this section, we define and prove basic properties of the temperature function. Since the temperature function is defined using the pressure of a 2-parameter family of potentials, we first define this 2-parameter family of potentials and prove basic properties of the pressure function in Section \ref{sec_3.2_pressure}. We discuss the temperature function in Section \ref{sec_temp}.
Throughout this section, we fix a holomorphic endomorphism $f \colon \mathbb P^k \to \mathbb P^k$ of algebraic degree at least 2 and an expanding set $X \subseteq \mathbb P^k$.

\subsection{The pressure function} \label{sec_3.2_pressure}
We begin by defining a 2-parameter family of potential functions that we will work with. 
Let $g \colon X \to \mathbb R$ be a H\"older continuous function. For $(q,t) \in \mathbb R^2$, we consider the following 2-parameter family of potentials
\begin{equation} \label{eq_potential_qt}
\phi_{q,t} := qg - t\log |\Jac f|.
\end{equation}
Since $\phi_{q,t}$ is H\"older continuous, we denote by $\nu_{q,t}$ its unique equilibrium state.

Given a H\"older continuous function $\phi \colon X \to \mathbb R$, recall from \cite{BHMane} the definition of the pressure $\mathcal{P}_X^+(\phi)$ of $\phi$
$$\mathcal{P}_X^+(\phi) := \sup_{\nu \in \mathcal{M}_X^+(f)} \left\{h_{\nu}(f) + \int \phi d\nu \right\} $$
where the sup is taken over all $\nu \in \mathcal{M}_X^+(f)$. By the variational principle, we have $\mathcal{P}_X^+(\phi) =\mathcal{P}_{X}^{\rm top}(\phi)$, where $\mathcal{P}_{X}^{\rm top}(\phi)$ is the {\it topological pressure} that we recall now from \cite[Chapter 11]{Urbanski21}. 
Consider a sequence $\{\cV_n\}_{n\in\NN}$ of open finite covers of $X$ such that $\lim_{n\to\infty}\diam(\cV_n) = 0.$
Then the topological pressure is given by 
$$
    \cP_X^{\rm top}(\phi) := \lim_{n\to\infty}P(\phi,\cV_n) = \lim_{n\to\infty}\lim_{m\to\infty}\frac{1}{m}\log\inf_{\cV_n}\left\{\sum_{V\in \cV_n}\exp(\overline{S_m}\phi(V))\right\}
$$
where $\overline{S_m}\phi(V) := \sup\left\{\sum_{k=0}^{n-1}\phi\circ f^k(x): x\in V\right\}$.

\smallskip

Recall the 2-parameter family of potential functions $\phi_{q,t}$ in \eqref{eq_potential_qt}. Up to replacing $g$ by $g - \mathcal{P}_X^+(g)$, we can assume that $\mathcal{P}_X^+(g)=0$.
With this assumption, we set $$\mathcal{P}(q,t) :=\mathcal{P}_X^+(\phi_{q,t}).$$

\begin{lemma} \label{lem_property_qt}
The pressure function $(q,t) \mapsto \mathcal{P}(q,t)$ satisfies the following properties.
\begin{enumerate}
\item For any $(q,t) \in \mathbb R^2$, we have $\mathcal{P}(q,t) < \infty$. \label{ITM:FinP}
\item The function $(q,t) \mapsto \mathcal{P}(q,t)$ is real-analytic. \label{ITM:ana}
\item For each fixed $q \in \mathbb R$, the map $t \mapsto \mathcal{P}(q,t)$ is strictly decreasing. Moreover, we have $\lim_{t \to -\infty}\mathcal{P}(q,t) = \infty$ and $\lim_{t \to \infty}\mathcal{P}(q,t) = -\infty$. \label{ITM:SDP}
\item  $\frac{\partial \mathcal{P}}{\partial t} (q,t) = -\int\log|\Jac f|d\nu_{q,t} < 0$. 
\label{ITM:tDerivative}
\item  $\frac{\partial \mathcal{P}}{\partial q}(q,t) = \int g\; d\nu_{q,t}$. \label{ITM:qDerivative}
\item If $g<0$, then we have $\mathcal{P}(q_1,t_1) \le \mathcal{P}(q_2,t_2)$ if $q_1 \ge q_2$ and $t_1 \ge t_2$. \label{ITM:tq-mono}
\end{enumerate}
\end{lemma}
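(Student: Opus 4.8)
The plan is to derive all six items from two inputs: the classical thermodynamic formalism for Hölder potentials on the uniformly expanding system $f\colon X\to X$ (equivalently, on its symbolic model), and the single structural fact that $\log|\Jac f|$ integrates to a uniformly positive quantity against every invariant measure. For the latter I first record that $\Jac f$ does not vanish on $X$, since $f$ is expanding there and hence a local diffeomorphism; thus $\log|\Jac f|$ is a bounded Hölder continuous function on the compact set $X$, and both $g$ and $\log|\Jac f|$ are bounded, with norms $\|g\|_\infty$ and $\|\log|\Jac f|\|_\infty$. Moreover, uniform expansion forces every Lyapunov exponent of every $\nu\in\mathcal M_X^+(f)$ to be at least $\log\eta$, so by Oseledets' theorem $\int_X\log|\Jac f|\,d\nu=L_\nu\ge k\log\eta>0$ uniformly in $\nu$. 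I stress that $\log|\Jac f|$ need not be pointwise positive (the constant $C$ may be less than $1$), so I will phrase the monotonicity arguments in terms of this integral bound rather than a pointwise one.

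For part \eqref{ITM:FinP}, I use the variational formula $\mathcal P(q,t)=\sup_\nu\{h_\nu+q\int g\,d\nu-t\int\log|\Jac f|\,d\nu\}$ together with $h_\nu\le h_{\mathrm{top}}(f|_X)\le k\log d<\infty$ and the sup-norm bounds, giving $\mathcal P(q,t)\le k\log d+|q|\,\|g\|_\infty+|t|\,\|\log|\Jac f|\|_\infty<\infty$. For the real-analyticity in part \eqref{ITM:ana}, I would pass to the subshift of finite type $(\Sigma_A^+,\sigma)$ coding $f\colon X\to X$, under which $\phi_{q,t}$ pulls back to a Hölder potential depending affinely, hence real-analytically, on $(q,t)$; the real-analytic dependence of the topological pressure on a Hölder potential is the classical Ruelle theorem, obtained by analytic perturbation of the simple leading eigenvalue of the transfer operator. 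This is the step I expect to carry the analytic weight of the lemma, and I would invoke it from the formalism of \cite{BHMane} (see also \cite{Urbanski21,PU}) rather than reprove it; everything else is elementary once it is in hand.

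Granted analyticity, parts \eqref{ITM:tDerivative} and \eqref{ITM:qDerivative} follow from the first-derivative formula for pressure, which states that $\partial_s\mathcal P=\int(\partial_s\phi_{q,t})\,d\nu_{q,t}$ for $s\in\{q,t\}$, where $\nu_{q,t}$ is the unique equilibrium state. Since $\partial_t\phi_{q,t}=-\log|\Jac f|$ and $\partial_q\phi_{q,t}=g$, this gives $\partial_t\mathcal P=-\int\log|\Jac f|\,d\nu_{q,t}$ and $\partial_q\mathcal P=\int g\,d\nu_{q,t}$. The strict negativity asserted in \eqref{ITM:tDerivative} is then immediate from the uniform bound of the first paragraph, namely $\int\log|\Jac f|\,d\nu_{q,t}=L_{\nu_{q,t}}\ge k\log\eta>0$.

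Finally, part \eqref{ITM:SDP} splits into monotonicity and the two limits: strict monotonicity of $t\mapsto\mathcal P(q,t)$ is immediate from \eqref{ITM:tDerivative}, whose value is strictly negative; for $t\to+\infty$ the bound $-t\int\log|\Jac f|\,d\nu\le-tk\log\eta$ in the variational formula gives $\mathcal P(q,t)\le k\log d+|q|\,\|g\|_\infty-tk\log\eta\to-\infty$, while fixing any single $\nu_0\in\mathcal M_X^+(f)$ yields $\mathcal P(q,t)\ge h_{\nu_0}+q\int g\,d\nu_0-tL_{\nu_0}\to+\infty$ as $t\to-\infty$ because $L_{\nu_0}>0$. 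For part \eqref{ITM:tq-mono}, when $g<0$ we have $\partial_q\mathcal P=\int g\,d\nu_{q,t}<0$ from \eqref{ITM:qDerivative} and $\partial_t\mathcal P<0$ from \eqref{ITM:tDerivative}, so $\mathcal P$ is decreasing in each variable separately; hence $q_1\ge q_2$ and $t_1\ge t_2$ give $\mathcal P(q_1,t_1)\le\mathcal P(q_2,t_1)\le\mathcal P(q_2,t_2)$, which is the claim.
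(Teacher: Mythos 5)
Your proposal is correct, and it uses the same basic toolkit as the paper (variational principle, the classical real-analyticity of pressure for H\"older potentials, and the first-derivative formula), but the logical route differs in two places worth noting. First, the paper proves the monotonicity in \eqref{ITM:SDP} directly from the variational principle (for $t'>t$, evaluate the defining supremum of $\mathcal P(q,t)$ at the measure $\nu_{q,t'}$ and use $\int\log|\Jac f|\,d\nu_{q,t'}>0$), so it never needs differentiability for that item; you instead derive it from \eqref{ITM:tDerivative}, which is fine but makes \eqref{ITM:SDP} depend on \eqref{ITM:ana}. Your treatment of the limits $t\to\pm\infty$ is actually tighter than the paper's: the paper writes $\mathcal P(q,t)=h_{\nu_{q,t}}(f)+q\int g\,d\nu_{q,t}-t\int\log|\Jac f|\,d\nu_{q,t}$ and ``takes limits,'' which is delicate because $\nu_{q,t}$ moves with $t$; your uniform bound $\int\log|\Jac f|\,d\nu\ge k\log\eta>0$ for all $\nu\in\mathcal M_X^+(f)$, together with a fixed test measure for $t\to-\infty$, is exactly what makes that step airtight. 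Second, and more substantively, for \eqref{ITM:tq-mono} the paper argues that $g<0$ forces the \emph{pointwise} inequality $\phi_{q_1,t_1}(x)\le\phi_{q_2,t_2}(x)$, which implicitly requires $\log|\Jac f(x)|\ge 0$ for all $x\in X$; as you correctly flag, uniform expansion with a constant $C<1$ does not guarantee pointwise expansion (hence pointwise nonnegativity of $\log|\Jac f|$) at time one. Your derivative-based chain $\mathcal P(q_1,t_1)\le\mathcal P(q_2,t_1)\le\mathcal P(q_2,t_2)$ sidesteps this; an equally clean repair in the paper's spirit is to compare the two suprema measure by measure, using $\int g\,d\nu\le 0$ and $\int\log|\Jac f|\,d\nu>0$ for every invariant $\nu$. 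So your proof is not only valid but, on item \eqref{ITM:tq-mono}, rests on strictly weaker hypotheses than the paper's own argument.
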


\begin{proof} 
To see (\ref{ITM:FinP}), note that $(X,f)$ is an expansive topological dynamical system in the sense of Urbanski-Roy-Munday \cite{Urbanski21}. By \cite[Proposition 11.3.2]{Urbanski21}, for each $(q,t)\in \mathbb R^2$, we have  
$$
    \cP(q,t)\leq h_{top}(f) + \sup_{x\in X} \phi_{q,t}.
$$
As $\phi_{q,t}$ is H\"older continuous on $X$, we have $|\sup_{x \in X}\phi_{q,t}|<\infty$. Hence (1) follows. 

\smallskip

For \eqref{ITM:ana}, by \cite[Theorem 13.10.3]{Urbanski22} which shows that the mapping of a H\"older continuous potential $\phi:X\to\RR$ with exponent $\beta$ to its topological pressure $\cP_X^{\rm top}(\phi)$, 
is real analytic and the fact that $(q,t)\mapsto\phi_{q,t}$ is affine, we see that the composition $(q,t) \mapsto \mathcal{P}(q,t)$ is real-analytic. 

\smallskip

For (\ref{ITM:SDP}), let $q\in\RR$ be fixed and let $t'>t$. Then by the variational principal, we have
\begin{align}
    \cP(q,t)&\geq h_{\nu_{q,t'}}(f)+q\int g\;d\nu_{q,t'}-t\int\log|\Jac f|d\;\nu_{q,t'}\notag\\
    &>h_{\nu_{q,t'}}(f)+q\int g\;d\nu_{q,t'}-t'\int\log|\Jac f|d\;\nu_{q,t'} = \cP(q,t').
\end{align}
Therefore $t \mapsto \mathcal{P}(q,t)$ is strictly decreasing. To see the last two statements, we write
$$
    \cP(q,t) = h_{\nu_{q,t}}(f)+q\int g d\nu_{q,t}-t\int\log|\Jac f|d\nu_{q,t}.
$$
Taking limits as $t\to \infty$ and $t\to -\infty$ then yields the result.

\smallskip

For (\ref{ITM:tDerivative}), the argument is near identical to \cite[Theorem 16.4.10]{Urbanski22}. Using the variational principal, 
\begin{align*}
\cP(q,t')&\geq h_{\nu_{q,t}}(f)+q\int g\;d\nu_{q,t}-t'\int\log|\Jac f|d\;\nu_{q,t}\notag\\
& =\cP(q,t)+(t-t')\int \log |\Jac f|d\nu_{q,t}.
\end{align*}
Therefore,
$$
\cP(q,t')\geq \cP(q,t)+(t-t')\int\log|\Jac f|\;d\nu_{q,t}.
$$
If $t'<t$, then $t'-t<0$ and 
$$
    \frac{\cP(q,t')-\cP(q,t)}{t'-t}\leq -\int\log|\Jac f|\;d\nu_{q,t}<0.
$$
Similarly, if $t'>t$, then $t'-t>0$ and 
$$
    \frac{\cP(q,t')-\cP(q,t)}{t'-t}\geq -\int\log|\Jac f|\;d\nu_{q,t}.
$$
Taking limits and noting that $\cP(q,t)$ is real-analytic (and thus differentiable) finishes the argument. The proof of (\ref{ITM:qDerivative}) is similar. The reader should note that the computation does not result in a negative sign in front of the integral in this case. 

\smallskip

For (\ref{ITM:tq-mono}), we follow \cite[Section 21.1]{Urbanski22}. Let $q_1\geq q_2$ and $t_1\geq t_2$. Then if $g<0$, $\phi_{q_1,t_1}(x)\leq \phi_{q_2,t_2}(x)$ for all $x\in X$. Thus, for any open set $V$, $\overline{S_m}\phi_{q_1,t_1}(V) \leq \overline{S_m}\phi_{q_2,t_2}(V)$. Since $\exp(\cdot)$ and $\log(\cdot)$ are monotone increasing, and $\inf\{\cdot\}$ preserves inequalities, the result follows. 
\end{proof}

\subsection{The temperature function}\label{sec_temp}
In this section, we define the {\it temperature function} and study its properties. We continue to use notations from Section \ref{sec_3.2_pressure}. Since $X$ is uniformly expanding, the arguments in this section are standard; see for example \cite{Barreirabook11,Pesin97book,PU}.

\begin{definition}
The {\it temperature function} $T \colon \mathbb R \to \mathbb R$ is defined as $q \mapsto T(q)$, where $T(q)$ is the unique number such that
$\mathcal{P}(q,T(q)) = 0$.
\end{definition}

\begin{remark}\label{rmk_T(0)}
By Lemma \ref{lem_property_qt} \eqref{ITM:SDP}, the above definition is well-posed. Moreover, we have $T(0) = \VD_{f}(X)$ by \cite[Theorem 5.1]{BHMane}.
\end{remark}

\begin{lemma}\label{lem_temp_an}
The temperature function $T \colon \mathbb R \to \mathbb R$ is real-analytic. 
\end{lemma}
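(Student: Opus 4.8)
The plan is to obtain the result as a direct application of the real-analytic implicit function theorem to the defining relation $\mathcal{P}(q,T(q)) = 0$. Set $F(q,t) := \mathcal{P}(q,t)$, regarded as a function of two real variables. By Lemma \ref{lem_property_qt}\eqref{ITM:ana} the function $F$ is real-analytic on $\mathbb{R}^2$, and by Lemma \ref{lem_property_qt}\eqref{ITM:SDP} the function $T$ is genuinely well-defined: for each fixed $q$ there is a unique $t = T(q)$ with $F(q,t) = 0$. Thus the zero set of $F$ is exactly the graph of $T$, and the task is to verify that this implicitly defined function is analytic.

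The crucial hypothesis for the analytic implicit function theorem is that the partial derivative $\partial F/\partial t$ is nonvanishing along the zero set. This is supplied verbatim by Lemma \ref{lem_property_qt}\eqref{ITM:tDerivative}, which gives
$$\frac{\partial F}{\partial t}(q,t) = -\int_X \log|\Jac f|\, d\nu_{q,t} < 0$$
for every $(q,t) \in \mathbb{R}^2$; the strict negativity reflects that $\int_X \log|\Jac f|\, d\nu_{q,t}$ is the sum of the Lyapunov exponents of $\nu_{q,t}$, which is strictly positive since $X$ is uniformly expanding. Hence the transversality condition holds not just at one point but everywhere on the graph of $T$.

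With these two ingredients in hand, I would fix an arbitrary $q_0 \in \mathbb{R}$ and put $t_0 := T(q_0)$, so that $F(q_0,t_0) = 0$. Because $F$ is real-analytic near $(q_0,t_0)$ and $\partial F/\partial t(q_0,t_0) \neq 0$, the analytic version of the implicit function theorem yields a real-analytic function $\tilde{T}$ on a neighborhood of $q_0$ with $\tilde{T}(q_0) = t_0$ and $F(q,\tilde{T}(q)) = 0$ throughout that neighborhood. The uniqueness of the zero guaranteed by Lemma \ref{lem_property_qt}\eqref{ITM:SDP} forces $\tilde{T} = T$ on the neighborhood, so $T$ is real-analytic near $q_0$. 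Since $q_0$ was arbitrary, $T$ is real-analytic on all of $\mathbb{R}$.

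I do not expect a serious obstacle, as every needed fact is already established in Lemma \ref{lem_property_qt}. The only points meriting care are, first, that it is the \emph{joint} real-analyticity of $\mathcal{P}$ in $(q,t)$ — rather than mere separate analyticity in each variable — that licenses the analytic implicit function theorem, and second, that the local analytic branches produced at each $q_0$ patch into the single globally defined $T$; the latter is immediate from the uniqueness of $T(q)$ and so requires no additional work.
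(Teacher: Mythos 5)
Your proof is correct and follows essentially the same route as the paper: the paper likewise applies the real-analytic implicit function theorem (which it refers to as the Inverse Function Theorem) to $\mathcal{P}(q,t)=0$, citing Lemma \ref{lem_property_qt} \eqref{ITM:ana} for joint real-analyticity and Lemma \ref{lem_property_qt} \eqref{ITM:tDerivative} for the nonvanishing of $\partial\mathcal{P}/\partial t$. Your additional remarks on patching the local branches via uniqueness of $T(q)$ are routine details the paper leaves implicit.
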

\begin{proof}  

Since the function $(q,t)\mapsto \mathcal{P}(q,t)$ is real-analytic by Lemma \ref{lem_property_qt} \eqref{ITM:ana},
the conclusion follows from the Inverse Function Theorem once we establish
$$\frac{\partial\mathcal{P}(q,t)}{\partial t}\bigg|_{(q_0,t_0)} \neq 0 \text{ for any } (q_0,t_0) \in \mathbb R^2.$$
By Lemma \ref{lem_property_qt} \eqref{ITM:tDerivative}, we have
$$\frac{\partial\mathcal{P}(q,t)}{\partial t}\bigg|_{(q_0,t_0)} = -\int_{X}\log |\Jac f|d\nu_{q_0,t_0} \neq 0$$
where $\nu_{q_0,t_0}$ is the equilibrium state for $\phi_{q_0,t_0}$. This completes the proof.
\end{proof}

Fix $q \in \mathbb R$. Let $\nu_q$ be the equilibrium state of $\phi_{q,T(q)}$. We define 
\begin{equation} \label{eq_def_alpha_q}
\alpha(q) := -\frac{\int_X g d\nu_q}{\int_X \log |\Jac f| d\nu_q} >0.
\end{equation}

\begin{lemma} \label{lem_temp_de}
The temperature function $T \colon \mathbb R \to \mathbb R$ satisfies $T'(q) = -\alpha(q)$ for all $q \in \mathbb R$. In particular, it is strictly decreasing.
\end{lemma}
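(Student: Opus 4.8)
The plan is to differentiate the defining relation $\mathcal{P}(q,T(q)) = 0$ implicitly with respect to $q$, using the real-analyticity of $T$ (Lemma \ref{lem_temp_an}) and of $\mathcal{P}$ (Lemma \ref{lem_property_qt} \eqref{ITM:ana}) to justify that both sides are differentiable and that the chain rule applies. Since $\mathcal{P}(q,T(q))$ is identically zero as a function of $q$, its derivative vanishes; expanding via the chain rule yields
$$
\frac{\partial \mathcal{P}}{\partial q}(q,T(q)) + \frac{\partial \mathcal{P}}{\partial t}(q,T(q)) \cdot T'(q) = 0.
$$

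Next I would substitute the two partial derivatives computed in Lemma \ref{lem_property_qt}. By \eqref{ITM:qDerivative} we have $\frac{\partial \mathcal{P}}{\partial q}(q,T(q)) = \int_X g\, d\nu_q$, where $\nu_q = \nu_{q,T(q)}$ is the equilibrium state of $\phi_{q,T(q)}$, and by \eqref{ITM:tDerivative} we have $\frac{\partial \mathcal{P}}{\partial t}(q,T(q)) = -\int_X \log|\Jac f|\, d\nu_q$. Substituting these gives
$$
\int_X g\, d\nu_q - T'(q)\int_X \log|\Jac f|\, d\nu_q = 0.
$$
Solving for $T'(q)$ and invoking the definition \eqref{eq_def_alpha_q} of $\alpha(q)$ then yields
$$
T'(q) = \frac{\int_X g\, d\nu_q}{\int_X \log|\Jac f|\, d\nu_q} = -\alpha(q).
$$

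Finally, to conclude that $T$ is strictly decreasing, I would note that $\alpha(q) > 0$ for all $q$, which is already asserted in \eqref{eq_def_alpha_q}; hence $T'(q) = -\alpha(q) < 0$ everywhere. The positivity of $\alpha(q)$ itself follows because $\int_X \log|\Jac f|\, d\nu_q > 0$ (the sum of Lyapunov exponents is strictly positive by uniform expansion, as recorded in Section \ref{sec_vd}) while $\int_X g\, d\nu_q < 0$; the latter sign is consistent with the normalization $\mathcal{P}_X^+(g) = 0$ together with positive entropy, so that $h_{\nu_q} + \int g\, d\nu_q < 0$ forces $\int g\, d\nu_q < 0$.

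The computation is entirely routine; the only point requiring care is the justification that the key denominator $\int_X \log|\Jac f|\, d\nu_q$ is nonzero, so that dividing through is legitimate. This is exactly the content of Lemma \ref{lem_property_qt} \eqref{ITM:tDerivative}, where strict positivity of this integral (equivalently, nonvanishing of $\partial \mathcal{P}/\partial t$) is guaranteed by uniform expansion. Thus the main obstacle is not analytic but rather bookkeeping: confirming that the hypotheses ensuring $\alpha(q) > 0$ are in force, which the earlier results supply directly.
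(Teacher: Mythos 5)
Your proposal is correct and takes essentially the same route as the paper: implicit differentiation of the identity $\mathcal{P}(q,T(q))=0$, followed by substitution of the partial derivative formulas from Lemma \ref{lem_property_qt} \eqref{ITM:qDerivative} and \eqref{ITM:tDerivative}, and solving for $T'(q)$. The only addition is your justification that $\alpha(q)>0$ (via $\int_X \log|\Jac f|\,d\nu_q>0$ and $\int_X g\,d\nu_q<0$), which the paper does not argue but simply asserts as part of the definition \eqref{eq_def_alpha_q}.
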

\begin{proof}
Since $\mathcal{P}(\phi_{q,T(q)})=0$ for any $q \in \mathbb R$, we have
$$\frac{d}{dq}\mathcal{P}(\phi_{q,T(q)}) = \frac{\partial\mathcal{P}(q,t)}{\partial q} + \frac{\partial\mathcal{P}(q,t)}{\partial t}\bigg|_{t = T(q)}T'(q) = 0.$$
Therefore, we have
$$T'(q) = - \frac{\partial\mathcal{P}(q,t)}{\partial q} \cdot \left(\frac{\partial\mathcal{P}(q,t)}{\partial t}\bigg|_{t = T(q)} \right)^{-1}.$$
By Lemma \ref{lem_property_qt} \eqref{ITM:qDerivative}, we have
$$\frac{\partial\mathcal{P}(q,t)}{\partial q}\bigg|_{(q_0,t_0)} = \int_{X}g d\nu_{q_0,t_0} \neq 0$$
where $\nu_{q_0,t_0}$ is the equilibrium state for $\phi_{q_0,t_0}$.

Hence, we obtain
$$T'(q_0) = -\frac{\int_{X}g d\nu_{q_0,t_0}}{-\int_{X} \log|\Jac f| d\nu_{q_0,t_0}} =-\alpha(q_0).$$
The conclusion follows. 
\end{proof}

Observe that $\nu_0$ is the unique equilibrium state of the potential $\phi_{0,T(0)} = -2\VD_{f}(X)\log|\Jac f|$.
\begin{lemma} \label{lem_temp_conv}
The temperature function $T \colon \mathbb R \to \mathbb R$ is convex. Moreover, it is strictly convex if and only if $\nu \neq \nu_0$.
\end{lemma}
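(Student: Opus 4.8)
The plan is to deduce convexity of $T$ from the joint convexity of the pressure, and to pin down the strict case through the second derivative of $T$, which in turn is controlled by an asymptotic variance.

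First I would record that the two-variable pressure $(q,t)\mapsto \mathcal{P}(q,t)$ is jointly convex. This is because the topological pressure is a convex function of the (H\"older) potential, and $(q,t)\mapsto \phi_{q,t}=qg-t\log|\Jac f|$ is affine; hence the composition is convex. Given this, convexity of $T$ is immediate from the defining relation $\mathcal{P}(q,T(q))=0$ together with the strict monotonicity in $t$ from Lemma \ref{lem_property_qt} \eqref{ITM:SDP}: for $q=\lambda q_1+(1-\lambda)q_2$ with $\lambda\in[0,1]$, joint convexity gives
\[
\mathcal{P}\bigl(q,\lambda T(q_1)+(1-\lambda)T(q_2)\bigr)\le \lambda\mathcal{P}(q_1,T(q_1))+(1-\lambda)\mathcal{P}(q_2,T(q_2))=0=\mathcal{P}(q,T(q)),
\]
and since $t\mapsto \mathcal{P}(q,t)$ is strictly decreasing this forces $\lambda T(q_1)+(1-\lambda)T(q_2)\ge T(q)$, i.e.\ $T$ is convex.

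Next, since $T$ is real-analytic by Lemma \ref{lem_temp_an}, I would differentiate $\mathcal{P}(q,T(q))=0$ twice and solve for $T''$. Using Lemma \ref{lem_property_qt} \eqref{ITM:tDerivative} and \eqref{ITM:qDerivative}, together with the standard identification of the Hessian of the pressure with the asymptotic covariance matrix of $g$ and $\log|\Jac f|$ with respect to $\nu_q=\nu_{q,T(q)}$, and the relation $T'(q)=-\alpha(q)$ from Lemma \ref{lem_temp_de}, one obtains
\[
T''(q)=\frac{\sigma^2_{\nu_q}\!\bigl(g+\alpha(q)\log|\Jac f|\bigr)}{\int_X \log|\Jac f|\,d\nu_q},
\]
where $\sigma^2_{\nu_q}$ denotes the asymptotic variance. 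The denominator is strictly positive because $X$ is uniformly expanding, so this reproves $T''\ge 0$, with $T''(q)=0$ precisely when $g+\alpha(q)\log|\Jac f|$ is cohomologous to a constant.

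It remains to characterize strict convexity. Because $T''$ is real-analytic, if it vanished on an interval it would vanish identically, so $T$ is strictly convex unless it is affine; thus it suffices to show that $T$ is affine if and only if $\nu=\nu_0$. If $\nu=\nu_0$, then $g$ is cohomologous to $\phi_{0,T(0)}=-T(0)\log|\Jac f|$ up to an additive constant; since $\mathcal{P}(g)=0=\mathcal{P}(\phi_{0,T(0)})$ the constant vanishes, and substituting $g\sim -T(0)\log|\Jac f|$ into $\mathcal{P}(q,T(q))=0$ (using that $t\mapsto\mathcal{P}(0,t)$ has a unique zero) yields $T(q)=T(0)(1-q)$, which is affine and gives $\alpha(q)\equiv T(0)=\VD_f(X)$ by Remark \ref{rmk_T(0)}. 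Conversely, if $T$ is affine then $T(q)=T(0)-\alpha q$ with $\alpha$ constant; evaluating at $q=1$, where $T(1)=0$ because $\mathcal{P}(1,0)=\mathcal{P}(g)=0$, gives $\alpha=T(0)$. Moreover $T''\equiv 0$ forces $\sigma^2_{\nu_0}\!\bigl(g+T(0)\log|\Jac f|\bigr)=0$, so $g+T(0)\log|\Jac f|$ is cohomologous to a constant; hence $g$ and $\phi_{0,T(0)}$ have the same equilibrium state, i.e.\ $\nu=\nu_0$. The main obstacle will be the two standard inputs from thermodynamic formalism that make the variance computation rigorous: the identification of the pressure Hessian with the asymptotic covariance, and the fact that the asymptotic variance of a H\"older function vanishes exactly when it is cohomologous to a constant; once these are invoked, the remainder is bookkeeping with the analytic relation $\mathcal{P}(q,T(q))=0$ and the normalizations $\mathcal{P}(g)=0$ and $T(1)=0$.
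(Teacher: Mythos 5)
Your proposal is correct in its core mechanism and overlaps substantially with the paper's proof: both differentiate $\mathcal{P}(q,T(q))=0$ twice and identify the numerator of $T''(q)$ with the asymptotic variance of $g-T'(q)\log|\Jac f|=g+\alpha(q)\log|\Jac f|$ with respect to $\nu_q$, divided by $\int_X\log|\Jac f|\,d\nu_q>0$. Your route to plain convexity, however, is genuinely different and arguably more robust: the paper obtains convexity only as a byproduct of the formula $T''\ge 0$, whereas you get it directly from joint convexity of $(q,t)\mapsto\mathcal{P}(q,t)$ (pressure is convex in the potential and $\phi_{q,t}$ is affine in $(q,t)$) combined with the strict monotonicity in Lemma \ref{lem_property_qt} \eqref{ITM:SDP}; this argument needs no smoothness of the pressure at all. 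You are also more careful than the paper about the additive constant in the cohomological equation (the paper silently passes from ``$g-T'(q)\log|\Jac f|$ is cohomologous to a constant'' to ``$\mathcal{P}(-T'(q)\log|\Jac f|)=\mathcal{P}(g)$'', which requires that constant to vanish).

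There is, however, a gap in your treatment of strictness relative to the definition this paper uses. The paper defines ``strictly convex'' for a $C^2$ function as $T''(q)>0$ for \emph{all} $q$. Your reduction ``since $T''$ is real-analytic, $T$ is strictly convex unless it is affine'' is valid for the usual inequality definition of strict convexity, but not for this one: real-analyticity only forbids zeros of $T''$ from accumulating, and a convex real-analytic function can have an isolated zero of its second derivative without being affine (e.g.\ $q\mapsto q^4$). So to prove ``$\nu\neq\nu_0$ implies $T''>0$ everywhere'' you must deal with a single zero $T''(q_*)=0$ directly, which is what the paper's proof does. The patch uses only tools you already set up: if $T''(q_*)=0$, your variance formula makes $g+\alpha(q_*)\log|\Jac f|$ cohomologous to a constant $c$; integrating this relation against the invariant measure $\nu_{q_*}$ and using the definition $\alpha(q_*)=-\int_X g\,d\nu_{q_*}\big/\int_X\log|\Jac f|\,d\nu_{q_*}$ forces $c=0$; hence $\mathcal{P}(-\alpha(q_*)\log|\Jac f|)=\mathcal{P}(g)=0$, so $\alpha(q_*)=\VD_f(X)$ by uniqueness of the zero of $t\mapsto\mathcal{P}(0,t)$ (Remark \ref{rmk_T(0)}), and then $g$ and $\phi_{0,T(0)}$ share an equilibrium state, i.e.\ $\nu=\nu_0$. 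In short: run your ``converse'' cohomology argument at the single point $q_*$ rather than assuming $T$ is globally affine, and the proof is complete under the paper's convention.
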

\begin{proof}
Differentiating $\mathcal{P}(\phi_{q,T(q)})$ with respect to $q$ twice, we obtain
$$T''(q) = -\frac{T'(q)^2 \cdot \frac{\partial^2\mathcal{P}(q,t)}{\partial t^2} + 2T'(q)\cdot \frac{\partial^2\mathcal{P}(q,t)}{\partial q \partial t} + \frac{\partial^2\mathcal{P}(q,t)}{\partial q^2} }{\frac{\partial\mathcal{P}(q,t)}{\partial t}}$$
where the partial derivatives are evaluated at $(q,t) = (q,T(q))$. 

\begin{align*}
\frac{\partial^2}{\partial t^2}\mathcal{P}(-t\log|\Jac f|+qg) &= \frac{\partial^2}{\partial t^2}\mathcal{P}(-(t_0+\epsilon_1+\epsilon_2)\log|\Jac f|+qg) \\
&={\rm Var}_{\phi_q}(\log|\Jac f|, \log|\Jac f|).
\end{align*}
Here the (asymptotic) variance ${\rm Var}$ is given by
$${\rm Var}_h(h_1,h_2) := \sum_{k=0}^\infty \left(\int_X h_1(h_2 \circ f^k)d\mu_h - \int_X h_1d\mu_h \int_X h_2d\mu_h  \right)$$
where $\mu_h$ is the equilibrium state for $h$.

Similarly, we have
$$\frac{\partial^2}{\partial q^2}\mathcal{P}(-t\log|\Jac f|+qg) ={\rm Var}_{\phi_q}(g, g), \text{ and }$$
$$\frac{\partial^2}{\partial q \partial t}\mathcal{P}(-t\log|\Jac f|+qg) ={\rm Var}_{\phi_q}(g, \log|\Jac f|).$$

Hence, we obtain
$$T''(q) = \frac{{\rm Var}_{\phi_q}(g-T'(q)\log|\Jac f|, g-T'(q)\log|\Jac f|)}{\int_X \log|\Jac f|d\nu_q}.$$
Therefore, $T''(q) \ge 0$ for any $q \in \mathbb R$. Moreover, $T''(q) = 0$ if and only if the function $g-T'(q)\log|\Jac f|$ is cohomologous to a constant, if and only if $g$ and $-T'(q)\log|\Jac f|$ are cohomologous. If $g$ and $-T'(q)\log|\Jac f|$ are cohomologous, then $\mathcal{P}(-T'(q)\log|\Jac f|) = \mathcal{P}(g) =0$, which implies $-T'(q) = \VD_f(X)$ and thus $\nu = \nu_0$.
\end{proof}

\begin{corollary}\label{cor_alpha_q}
If $\nu \neq \nu_0$, then the function $q \mapsto \alpha(q)$ is strictly decreasing. If $\nu = \nu_0$, then $\alpha(q) = \VD_f(X)$ for all $q \in \mathbb R$.
\end{corollary}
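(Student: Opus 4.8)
The plan is to read off the monotonicity of $\alpha$ directly from the identity $\alpha(q) = -T'(q)$ proved in Lemma \ref{lem_temp_de}, so that $\alpha'(q) = -T''(q)$ and the behaviour of $\alpha$ is controlled entirely by the sign of $T''$.

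For the case $\nu \neq \nu_0$, I would invoke the second-derivative computation inside the proof of Lemma \ref{lem_temp_conv}. There it is shown that $T''(q) = 0$ at a parameter $q$ forces $g$ and $-T'(q)\log|\Jac f|$ to be cohomologous, which in turn forces $\nu = \nu_0$. Contrapositively, if $\nu \neq \nu_0$ then $T''(q) \neq 0$ for every $q$; combined with the convexity $T'' \ge 0$ already established, this upgrades to the pointwise strict inequality $T''(q) > 0$ for all $q \in \mathbb R$. Hence $\alpha'(q) = -T''(q) < 0$ everywhere and $\alpha$ is strictly decreasing.

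For the degenerate case $\nu = \nu_0$, I would argue via cohomology. Since $\nu$ is the equilibrium state of $g$ (normalized so that $\mathcal{P}(g) = 0$) and $\nu_0$ is the equilibrium state of $-\VD_f(X)\log|\Jac f|$, a potential of pressure $\mathcal{P}(0,T(0)) = 0$ by Remark \ref{rmk_T(0)}, the hypothesis $\nu = \nu_0$ means these two Hölder potentials share an equilibrium state and a common pressure, hence they differ by a coboundary. Writing $g = -\VD_f(X)\log|\Jac f| + u\circ f - u$ and integrating against any $f$-invariant measure $\mu$ kills the coboundary, giving $\int g\, d\mu = -\VD_f(X)\int \log|\Jac f|\, d\mu$. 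Substituting $\mu = \nu_q$ into the definition \eqref{eq_def_alpha_q} and using $\int \log|\Jac f|\, d\nu_q > 0$ (valid since $X$ is uniformly expanding) then yields $\alpha(q) = \VD_f(X)$ for every $q$.

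The only point demanding care is the degenerate case: the monotonicity in the first case is immediate, but extracting the precise constant $\VD_f(X)$ relies on the cohomological rigidity that accompanies the equality $\nu = \nu_0$ rather than on any further computation with the temperature function.
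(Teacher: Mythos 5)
Your proof is correct and follows essentially the same route as the paper, whose entire argument is the identity $\alpha(q) = -T'(q)$ from Lemma \ref{lem_temp_de} combined with the convexity dichotomy of Lemma \ref{lem_temp_conv}. The only difference is one of completeness: for the case $\nu = \nu_0$ you explicitly supply the cohomological rigidity step (same equilibrium state and same zero pressure for $g$ and $-\VD_f(X)\log|\Jac f|$ forces them to differ by a coboundary, whence $\alpha(q) \equiv \VD_f(X)$ after integrating against $\nu_q$), a converse direction that the paper's one-line proof leaves implicit and which Lemma \ref{lem_temp_conv} itself only proves in the opposite direction.
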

\begin{proof}
Since $\alpha(q) = -T'(q)$ by Lemma \ref{lem_temp_de}, the conclusion follows from Lemma \ref{lem_temp_conv}.
\end{proof}

\begin{remark}\label{rmk_range_alpha(q)}
We denote by $[\alpha_1,\alpha_2] \subset (0,\infty)$ the range of the function $q \mapsto \alpha(q)$. By Corollary \ref{cor_alpha_q}, if $\nu = \nu_0$, then $\alpha_1=\alpha_2=\VD_f(X)>0$. If $\nu \neq \nu_0$, then $\alpha_1 = \al(\infty)$ and $\alpha_2 = \al(-\infty)$. In this case, $\alpha_2 = \alpha(-\infty)<\infty$ since $\lim_{q \to -\infty} \frac{-\int g d\nu_q}{\int \log |\Jac f|d\nu_q} \le \frac{\sup(-g)}{\inf \log|\Jac f|} < \infty$. Moreover, $\alpha_1 = \alpha(\infty)>0$ since $\frac{-\int g d\nu_q}{\int \log |\Jac f|d\nu_q} \ge |T(q_0)|/q_0>0$ for all $q$ where $q_0>0$ is a fixed number with $T(q_0)<0$; see \cite[p. 254]{PU}.
\end{remark}

\section{Proof of Theorem \ref{thm_main_1}}\label{sec_main_1}

Let $f \colon \mathbb P^k \to \mathbb P^k$ be a holomorphic endomorphism of algebraic degree at least 2, $X \subseteq \mathbb P^k $ an uniformly expanding set, $g \colon X \to \mathbb R$ a H\"older continuous function and $\nu \in \mathcal{M}_X^+(f)$ the equilibrium state of $g$.
For each $q \in \mathbb R$, recall the potential function $\phi_q = \phi_{q,T(q)} \colon X \to \mathbb R$ given by
$$\phi_q := qg -T(q) \log|\Jac f|$$ and its unique equilibrium state $\nu_q \in \mathcal{M}_X^+(f)$. Recall also $\alpha(q)$ as defined in \eqref{eq_def_alpha_q} and
$J_\nu(\alpha(q))$ as defined in \eqref{Def:Jal}.

\begin{definition} \label{def_s_nu_alpha_q}
The {\it parametrized volume dimension multifractal spectrum} is the map $\hat{S}_\nu \circ \alpha \colon \mathbb R \to \mathbb R$ given by
\begin{equation*}
\hat{S}_\nu(\alpha(q)) := \VD_{f,\nu_q}(J_\nu(\alpha(q))).
\end{equation*}
\end{definition}

\begin{remark}
We sometimes need to regard $S_\nu(\alpha(q))$ as a function of $\alpha$. We write $S_\nu(\alpha)$ for $\alpha \in [\alpha_1,\alpha_2]$ which is the range of $\alpha(q), q\in\mathbb R$. In this case, provided $\nu \neq \nu_0$, $\hat{S}_\nu(\alpha) = \VD_{f,\nu_q}(J_\nu(\alpha(q)))$ where $q$ is the unique real number such that $\alpha(q) = \alpha$. The uniqueness of $q$ is guaranteed by Corollary \ref{cor_alpha_q}.
\end{remark}

\subsection{Symbolic coding}
Since $f \colon X \to X$ is uniformly expanding, there exists a {\it Markov partition} $\mathcal{R} :=\{R_1,\ldots, R_p\}$ of $X$ so that the dynamical system $f \colon X \to X$ is conjugate to a one-sided subshift of finite type $\sigma \colon \Sigma_A^+ \to \Sigma_A^+$. We denote by $\pi \colon \Sigma_A^+ \to X$ the projection map. 

If $\phi \colon X \to \mathbb R$ is a H\"older continuous function, then we denote by $\widetilde{\phi}:= \phi \circ \pi$, and it is also H\"older continuous. Both $\phi$ and $\widetilde{\phi}$ have a unique equilibrium state, denote by $\nu_\phi$ and $\widetilde{\nu}_{\widetilde{\phi}}$ respectively, and they satisfy $\nu_\phi = \pi_* \widetilde{\nu}_{\widetilde{\phi}}$.

If $\mu$ is the equilibrium state of a H\"older continuous potential $\phi \colon X\to \RR$, then $\mu$ satisfies the {\it Gibbs property} (see \cite[Section 13.2]{Urbanski22}). That is, for every $\omega \in \Sigma_A^+,\epsilon>0,0<\kappa<r(\ep)$ and $m >n(\ep)$, we have
\begin{equation}\label{eq_gibbs_nu}
C_1 \le \frac{\nu(U(\pi(\omega),\ep,\kappa,m))}{\prod_{k=0}^{m-1} e^{\phi(\pi(\sigma^k(\omega)))}} \le C_2
\end{equation}
where $C_1,C_2>0$ are constants. In particular, $C_1$ and $C_2$ may be taken such that $C_2>1$ and $C_1 = C_2^{-1}$.
Recall $r(\ep),n(\ep)$ from Section \ref{sec_lvd}. 

\smallskip

Given $\ep>0,\kappa>0,N>0$, let $r = r(\ep,\kappa,N)>0$ be a small positive real number depending on $\ep,\kappa$ and $N$. For $\omega \in \Sigma_A^+$, let $m(\omega) = m(\omega,\ep,\kappa,N)$ be the unique positive integer such that
\begin{align} \label{eq_vol}
&\prod_{k=0}^{m(\omega)} \left|\Jac f (\pi(\sigma^k(\omega)))\right|^{-1}> r 
\text{ and } \\ &\prod_{k=0}^{m(\omega)+1} \left|\Jac f (\pi(\sigma^k(\omega)))\right|^{-1} \le r. 
\end{align}
In our applications, $r$ will be  
${\rm Vol}(U(x,\epsilon,\kappa,N))^{1/2}$ for a fixed $x \in X$.

\begin{lemma}\label{lem_4.3}
Fix $x \in X$ and $\phi \colon X\to \RR$ a H\"older continuous potential with equilibrium state $\mu$. Further, let $\omega \in \Sigma_A^+$ be such that $\pi(\omega)=x$ and for given $\epsilon>0,0<\kappa<r(\ep),N>n(\ep)$, set $r = {\rm Vol}(U(x,\ep,\kappa,N))^{1/2}$.
Then we have
$$\lim_{\ep \to 0}\lim_{\kappa \to 0}\lim_{N \to \infty} \frac{\log \mu(U(x,\ep,\kappa, m(\omega)))}{\log r} = \lim_{\ep \to 0}\lim_{\kappa \to 0}\lim_{N \to \infty} \frac{\log \mu(U(x,\ep,\kappa,N))}{\log r}$$
should either limit exist.
\end{lemma}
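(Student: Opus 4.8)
The plan is to compare the two numerators through the Gibbs property and to show that, after division by $\log r$, their difference is of order $\epsilon$, hence annihilated by the outermost limit $\epsilon\to0$. First I would reduce everything to a comparison of depths. Applying the Gibbs property \eqref{eq_gibbs_nu} to the equilibrium state $\mu$ of $\phi$, one has for every depth $m$
$$\log\mu\big(U(x,\epsilon,\kappa,m)\big) = \sum_{k=0}^{m-1}\phi(\pi(\sigma^k(\omega))) + O(1),$$
with the $O(1)$ bounded by $\log C_2$ uniformly in $m,\kappa,\epsilon$. Since $\phi$ is bounded on the compact set $X$, subtracting the instances $m=m(\omega)$ and $m=N$ gives
$$\big|\log\mu(U(x,\epsilon,\kappa,m(\omega))) - \log\mu(U(x,\epsilon,\kappa,N))\big| \le \|\phi\|_\infty\,|m(\omega)-N| + 2\log C_2.$$
Thus the lemma reduces to showing that $|m(\omega)-N|$ is small compared with $|\log r|$.

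Next I would estimate $|m(\omega)-N|$. Write $S(m):=\sum_{k=0}^{m-1}\log|\Jac f(\pi(\sigma^k(\omega)))|$. The defining inequalities \eqref{eq_vol} for $m(\omega)$ read $|\log r| = S(m(\omega)+1) + O(1)$, the $O(1)$ being a single Jacobian factor. On the other hand, since $f$ is holomorphic the real Jacobian of $f^N$ along the orbit is $\prod_{k=0}^{N-1}|\Jac f(\pi(\sigma^k(\omega)))|^{2}$, so the bounded-distortion estimates of \cite{BDM,BHMane} for the injective map $f^N\colon U(x,\epsilon,\kappa,N)\to B(f^N(x),\kappa e^{-NM\epsilon})$ give, after a change of variables,
$$\log\mathrm{Vol}(U(x,\epsilon,\kappa,N)) = -2S(N) + 2k\log\big(\kappa e^{-NM\epsilon}\big) + O(1),$$
whence $|\log r| = \tfrac12|\log\mathrm{Vol}(U(x,\epsilon,\kappa,N))| = S(N) + kNM\epsilon + k\log(1/\kappa) + O(1)$. (This is exactly the point of the square roots introduced in Remarks \ref{rmk_change1} and \ref{rmk_change2}: they make $r$ comparable to $\prod|\Jac f|^{-1}$.) Subtracting the two expressions for $|\log r|$ yields
$$S(m(\omega)+1) - S(N) = kNM\epsilon + k\log(1/\kappa) + O(1).$$
Because $X$ is uniformly expanding we have $c_0:=\min_X\log|\Jac f|>0$ (this positivity is already implicit in \eqref{eq_vol}, where the products $\prod_{k=0}^{m}|\Jac f|^{-1}$ must be decreasing), so each increment of $S$ lies in $[c_0,\bar{c}_0]$ with $\bar{c}_0:=\max_X\log|\Jac f|<\infty$. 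Hence $m(\omega)\ge N$ and $m(\omega)-N \le c_0^{-1}\big(kNM\epsilon + k\log(1/\kappa) + O(1)\big)$.

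Finally I would assemble the ratio bound and run the iterated-limit bookkeeping. Using $|\log r| \ge \tfrac12 c_0 N$ for $N$ large together with the two displayed bounds, one obtains a constant $C=C(\|\phi\|_\infty,k,M,c_0)$, independent of $\epsilon$ and $\kappa$, such that
$$\limsup_{N\to\infty}\left|\frac{\log\mu(U(x,\epsilon,\kappa,m(\omega)))}{\log r} - \frac{\log\mu(U(x,\epsilon,\kappa,N))}{\log r}\right| \le C\epsilon.$$
Consequently, for each fixed $\epsilon,\kappa$ the inner $\liminf_N$ and $\limsup_N$ of the two ratios differ by at most $C\epsilon$; this bound is unaffected by $\kappa\to0$ and tends to $0$ as $\epsilon\to0$. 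Therefore the iterated upper limits of the two expressions agree, as do the iterated lower limits, so that whenever one of the two iterated limits exists the other exists and equals it, which is the assertion of the lemma.

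The main obstacle is the depth comparison: one must check not merely that $m(\omega)$ and $N$ are comparable but that their difference carries an explicit factor $\epsilon$ in its $N$-linear part, since a genuine $O(N)$ gap would survive division by $|\log r|\asymp N$ and destroy the equality. This is exactly what the cancellation of the $S(N)$ terms delivers, leaving only the $kNM\epsilon$ coming from the shrinking radius $\kappa e^{-NM\epsilon}$ of the ball $f^N(U)$.
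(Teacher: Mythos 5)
Your proof is correct and is essentially the paper's own argument: both reduce the problem, via the Gibbs property, to controlling the depth gap $m(\omega)-N$, and both conclude because the iterated limit of $(m(\omega)-N)/\log r$ vanishes — its $N$-linear part carries a factor of $\epsilon$ that dies under the outermost limit. The differences are cosmetic: you use a two-sided Gibbs bound where the paper gets one inequality for free from nestedness of the $U$-sets and $m(\omega)\ge N$, and you re-derive the depth/volume estimate directly from uniform expansion and H\"older continuity of $\log|\Jac f|$ where the paper instead cites \cite[Corollary 2.4 and Lemma 2.7]{BHMane} to obtain \eqref{asymp_mw}; note only that if one relies solely on the cited estimates of \cite{BHMane}, your $O(1)$ volume-distortion error should be weakened to $O(N\epsilon)$, which your final $C\epsilon$ bookkeeping already tolerates.
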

\begin{proof}

Fix $\epsilon>0$ and $0<\kappa<r(\epsilon)$. 
Note that by inequality \eqref{eq_vol}, \cite[Corollary 2.4 and Lemma 2.7]{BHMane}, we have

\begin{equation}\label{ineq:mwgeq}
    m(\om) \geq -\frac{k\log\kappa}{L_{\nu}+k\epsilon}+N\left(\frac{L_{\nu}-k\epsilon}{L_{\nu}+k\epsilon}\right)-1.
\end{equation}

For all $N>n(\ep)$ sufficiently large, note that 
\begin{equation}\label{ineq:mN1}
\lim_{\ep \to 0}\lim_{\kappa \to 0}\lim_{N \to \infty} \frac{\log \mu(U(x,\ep,\kappa, m(\omega)))}{\log r} \geq \lim_{\ep \to 0}\lim_{\kappa \to 0}\lim_{N \to \infty} \frac{\log \mu(U(x,\ep,\kappa,N))}{\log r} 
\end{equation}
is immediate as $m(\om)>N$ by \eqref{ineq:mwgeq} and the $U$ sets are nested. Now we show the other direction. 
By the Gibbs property of $\mu$ \eqref{eq_gibbs_nu}, we have
\begin{align*}
    \log&\;\mu(U(x,\epsilon,\kappa,m(\om))) \geq \log C_1 + \sum_{k=0}^{m(\om)-1} \phi(\pi(\sigma^k(\omega)))\\
    & = \log C_1 -\log C_2 +\log C_2+\sum_{k=0}^{m(\om)-1} \phi(\pi(\sigma^k(\omega)))\\
    & \ge \log \frac{C_1}{C_2} + \log \mu(U(x,\epsilon,\kappa,N)) + \sum_{k=N}^{m(\omega)-1}\phi(\pi(\sigma^k(\omega)))\\
    & \ge \log \frac{C_1}{C_2} + \log \mu(U(x,\epsilon,\kappa,N)) + (m(\omega)-N)\inf \phi.
\end{align*}
So for all N large enough, we have
\begin{align}\label{eq_4.3_1}
    \frac{\log \mu(U(x,\epsilon,\kappa,m(\om)))}{\log r} &\leq \frac{\log \mu(U(x,\epsilon,\kappa,N))}{\log r}+\frac{\log\frac{C_1}{C_2}}{\log r}+\left(\frac{m(\om)-N}{\log r}\right)\inf \phi. 
\end{align}
Consider the term $\left(\frac{m(\om)-N}{\log r}\right)$. By \eqref{ineq:mwgeq} and \cite[Lemma 2.7]{BHMane}, we obtain

\begin{equation}\label{asymp_mw}
    \left(\frac{m(\om)-N}{\log r}\right)\leq \frac{-\frac{k\log\kappa}{L_{\nu}+k\epsilon}+N\left(\frac{L_{\nu}-k\epsilon}{L_{\nu}+k\epsilon}-1\right)-1}{k\log\kappa -N(L_{\nu}+k(2M+2)\epsilon)}.
\end{equation}
And so we have 
\begin{align*}
    \lim_{\ep \to 0}\lim_{\kappa \to 0}\lim_{N \to \infty}\left(\frac{m(\om)-N}{\log r}\right) = \lim_{\ep \to 0}\lim_{\kappa \to 0}-\left(\frac{\frac{L_{\nu}-k\epsilon}{L_{\nu}+k\epsilon}-1}{L_{\nu}+k(2M+2)\epsilon}\right)=0.
\end{align*}
Hence, taking limits $\lim_{\ep \to 0}\lim_{\kappa \to 0}\lim_{N \to \infty}$ on both sides of \eqref{eq_4.3_1} gives
\begin{equation}\label{ineq:mN2}
\lim_{\ep \to 0}\lim_{\kappa \to 0}\lim_{N \to \infty} \frac{\log \mu(U(x,\ep,\kappa, m(\omega)))}{\log r} \leq \lim_{\ep \to 0}\lim_{\kappa \to 0}\lim_{N \to \infty} \frac{\log \mu(U(x,\ep,\kappa,N))}{\log r}.
\end{equation}
The result then follows from \eqref{ineq:mN1} and \eqref{ineq:mN2}.
\end{proof}

\subsection{Key proposition}
In this section, we prove Proposition \ref{prop_lem3}, which are key ingredients needed in the proof of Theorem \ref{thm_main_1}.

For $q \in \mathbb R$,

we define the {\it symbolic level set} $\hat{X}_{q} \subseteq \Sigma_A^+$ as 
\begin{equation}\label{eq_def_symbolic_level}
\hat{X}_{q} :=\left\{\omega \in \Sigma_A^+ : 
\lim_{n\to\infty}\frac{\sum_{k=0}^{n-1} g(\pi(\sigma^k(\omega)))}{\sum_{k=0}^{n-1}  \log|\Jac f(\pi(\sigma^k(\omega)))|^{-1}} = \alpha(q) \right\}.
\end{equation}

\begin{lemma}\label{lem_2}
For every $q \in \mathbb R$, we have $\nu_q(\pi(\hat{X}_q)) =1$.
\end{lemma}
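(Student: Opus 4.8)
The plan is to lift the problem to the symbolic space and invoke the Birkhoff Ergodic Theorem. Let $\widetilde{\phi}_q := \phi_{q,T(q)} \circ \pi$ be the lift of the potential to $\Sigma_A^+$ and let $\widetilde{\nu}_q$ denote its unique equilibrium state; as recalled in the discussion of symbolic coding, the conjugacy $\pi$ gives $\nu_q = \pi_* \widetilde{\nu}_q$. The measure $\widetilde{\nu}_q$ is the Gibbs state of a H\"older potential on a subshift of finite type, hence ergodic, and this ergodicity is exactly what drives the argument.

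First I would apply the Birkhoff Ergodic Theorem on $(\Sigma_A^+,\sigma,\widetilde{\nu}_q)$ to the two continuous observables $\widetilde{g} := g\circ\pi$ and $\widetilde{\psi} := -(\log|\Jac f|)\circ\pi$. Using $\int_{\Sigma_A^+}(h\circ\pi)\,d\widetilde{\nu}_q = \int_X h\,d\nu_q$ for $h\in\{g,\log|\Jac f|\}$, we obtain for $\widetilde{\nu}_q$-almost every $\omega$ that
\[
\frac{1}{n}\sum_{k=0}^{n-1} g(\pi(\sigma^k(\omega))) \longrightarrow \int_X g\, d\nu_q,
\qquad
\frac{1}{n}\sum_{k=0}^{n-1} \log|\Jac f(\pi(\sigma^k(\omega)))|^{-1} \longrightarrow -\int_X \log|\Jac f|\, d\nu_q.
\]
Because $X$ is uniformly expanding, every $\nu_q$ has strictly positive Lyapunov exponents, so $\int_X \log|\Jac f|\, d\nu_q = L_{\nu_q} > 0$; in particular the denominator average tends to a strictly negative constant, and the denominator Birkhoff sum is eventually nonzero, so the quotient is well defined for large $n$.

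Dividing the two averages, the factors $1/n$ cancel and the ratio appearing in the definition of $\hat{X}_q$ converges, for $\widetilde{\nu}_q$-a.e. $\omega$, to
\[
\frac{\int_X g\, d\nu_q}{-\int_X \log|\Jac f|\, d\nu_q}
= -\frac{\int_X g\, d\nu_q}{\int_X \log|\Jac f|\, d\nu_q}
= \alpha(q),
\]
which is precisely the defining condition in \eqref{eq_def_symbolic_level}. Hence $\widetilde{\nu}_q(\hat{X}_q) = 1$.

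Finally I would push forward to $X$. The set $\hat{X}_q$ is Borel, being a level set of a pointwise limit of continuous functions, so its continuous image $\pi(\hat{X}_q)$ is analytic and therefore universally (hence $\nu_q$-) measurable. Since $\hat{X}_q \subseteq \pi^{-1}(\pi(\hat{X}_q))$, the pushforward identity gives
\[
\nu_q(\pi(\hat{X}_q)) = \widetilde{\nu}_q\bigl(\pi^{-1}(\pi(\hat{X}_q))\bigr) \ge \widetilde{\nu}_q(\hat{X}_q) = 1,
\]
so $\nu_q(\pi(\hat{X}_q)) = 1$. The only genuinely delicate points are the ergodicity of the symbolic equilibrium state, which legitimizes the almost-everywhere convergence, and the non-vanishing of the denominator limit, which is where uniform expansion enters; the measurability of $\pi(\hat{X}_q)$ and the pushforward bookkeeping are routine.
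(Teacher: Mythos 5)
Your proof is correct and follows essentially the same route as the paper: lift to the symbolic coding, apply the Birkhoff ergodic theorem to the ergodic equilibrium state $\widetilde{\nu}_q$ to get almost-sure convergence of the ratio of Birkhoff sums to $\alpha(q)$, and push forward via $\nu_q = \pi_*\widetilde{\nu}_q$. The only difference is that you spell out details the paper leaves implicit (the strictly negative denominator limit via positive Lyapunov exponents, and the measurability/pushforward step), which is fine.
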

\begin{proof}
Denote by $\widetilde{\nu}_q$ the equilibrium state of $\widetilde{\phi}_q$. Since $\widetilde{\nu}_q$ is ergodic, by the Birkhoff ergodic theorem, for $\widetilde{\nu}_q$-a.e. $\omega \in \Sigma_A^+$, we have
\begin{align}\label{eq_limit}
\lim_{n \to \infty} \frac{\sum_{k=0}^{n-1} (g\circ \pi)(\sigma^k(\omega))}{\sum_{k=0}^{n-1} \log|\Jac f(\pi(\sigma^k(\omega)))|^{-1}} &= \frac{\int_{\Sigma_A^+} g\circ \pi ~d\widetilde{\nu}_q}{\int_{\Sigma_A^+} \log |(\Jac f)\circ \pi|^{-1} ~d\widetilde{\nu}_q} = \frac{\int_{X} g ~d\nu_q}{\int_{X} \log|\Jac f|^{-1} ~d\nu_q} = \alpha(q). \nonumber
\end{align}
The conclusion follows.
\end{proof}

Now we prove the key proposition. 
\begin{proposition}\label{prop_lem3}
For every $q \in \mathbb R$, we have
\begin{enumerate}
\item $\nu_q(J_{\nu}(\alpha(q))) =1$;
\item $\delta_{\nu_q,x} = T(q)+q\alpha(q)$ for every $x \in J_{\nu}(\alpha(q))$; 
\item $\VD_{f,\nu_q}(J_{\nu}(\alpha(q))) = T(q)+q\alpha(q)$. 
\end{enumerate}
\end{proposition}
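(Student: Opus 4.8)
The plan is to push the whole computation into the symbolic model $\Sigma_A^+$, where the Gibbs property \eqref{eq_gibbs_nu} converts every $U$-set measure into a Birkhoff sum. The engine is a single master identity. Fix $x=\pi(\omega)$ and, for the triple limit of Definition \ref{def_local_vd}, set $r={\rm Vol}(U(x,\epsilon,\kappa,N))^{1/2}$. Lemma \ref{lem_4.3} replaces the index $N$ by $m(\omega)$ inside the limit, while the defining inequalities \eqref{eq_vol} give $\log r=\sum_{k=0}^{m(\omega)-1}\log|\Jac f(\pi(\sigma^k\omega))|^{-1}+O(1)$, the boundary term being bounded since $\log|\Jac f|$ is bounded on $X$. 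Feeding in the Gibbs property of $\nu$ (potential $g$, pressure $0$) gives, whenever the limit exists,
\[
\delta_{\nu,x}=\lim_{\epsilon\to0}\lim_{\kappa\to0}\lim_{N\to\infty}\frac{\sum_{k=0}^{m(\omega)-1}g(\pi(\sigma^k\omega))}{\sum_{k=0}^{m(\omega)-1}\log|\Jac f(\pi(\sigma^k\omega))|^{-1}},
\]
while feeding in the Gibbs property of $\nu_q$ (potential $\phi_q=qg-T(q)\log|\Jac f|$, pressure $0$) gives $\log\nu_q(U)=q\log\nu(U)+T(q)\log r+O(1)$; dividing by $\log r$ and taking the triple limit yields the pointwise relation
\[
\delta_{\nu_q,x}=q\,\delta_{\nu,x}+T(q)
\]
at every $x$ where $\delta_{\nu,x}$ exists.

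Parts (1) and (2) then fall out. For $\omega\in\hat{X}_q$ the ratio in \eqref{eq_def_symbolic_level} converges to $\alpha(q)$ along all of $\mathbb{N}$, hence along the subsequence $\{m(\omega)\}$; by the master identity this says $\delta_{\nu,\pi(\omega)}=\alpha(q)$, i.e. $\pi(\hat{X}_q)\subseteq J_\nu(\alpha(q))$. With Lemma \ref{lem_2} this gives $\nu_q(J_\nu(\alpha(q)))\ge\nu_q(\pi(\hat{X}_q))=1$, proving (1). For (2), any $x\in J_\nu(\alpha(q))$ has $\delta_{\nu,x}=\alpha(q)$ by \eqref{Def:Jal}, so the relation $\delta_{\nu_q,x}=q\,\delta_{\nu,x}+T(q)$ gives $\delta_{\nu_q,x}=q\alpha(q)+T(q)$ for every such $x$.

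For part (3) write $d:=T(q)+q\alpha(q)$. The lower bound is essentially free: by (1) the set $J_\nu(\alpha(q))$ has full $\nu_q$-measure and so is admissible in the infimum defining $\VD_f(\nu_q)$, whence $\VD_{f,\nu_q}(J_\nu(\alpha(q)))\ge\VD_f(\nu_q)$; and Theorem \ref{thm_BHMane_1.1}, combined with the equilibrium identity $h_{\nu_q}=-\int\phi_q\,d\nu_q$ and the relation $\int_Xg\,d\nu_q=-\alpha(q)\int_X\log|\Jac f|\,d\nu_q$ from \eqref{eq_def_alpha_q}, gives $h_{\nu_q}=(q\alpha(q)+T(q))L_{\nu_q}$ (with $L_{\nu_q}=\int_X\log|\Jac f|\,d\nu_q$), so $\VD_f(\nu_q)=h_{\nu_q}/L_{\nu_q}=d$, matching the pointwise value from (2) as it must. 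For the upper bound I would run the Frostman/Billingsley estimate adapted to the gauge ${\rm Vol}(\cdot)^{\alpha/2}$ of Definition \ref{Def:VDIII}: by (2) one has $\nu_q(U)={\rm Vol}(U)^{d/2+o(1)}$ for $U=U(x,\epsilon,\kappa,N)$ centered in $J_\nu(\alpha(q))$, so for any $d'>d$, ${\rm Vol}(U)^{d'/2}={\rm Vol}(U)^{(d'-d)/2}\nu_q(U)$ with ${\rm Vol}(U)^{(d'-d)/2}\to0$; covering $J_\nu(\alpha(q))^\epsilon$ by deep $U$-sets of uniformly bounded multiplicity then forces $\sum_i{\rm Vol}(U_i)^{d'/2}\to0$, i.e. $\Lambda^\epsilon_{d'}=0$, giving $\VD_{f,\nu_q}(J_\nu(\alpha(q)))\le d'$ for all $d'>d$.

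I expect the upper bound in (3) to be the only genuine obstacle. The trouble is uniformity: $\nu_q(U)={\rm Vol}(U)^{d/2+o(1)}$ holds only pointwise and through a triple limit, so transferring it to a single covering scale needs an Egorov-type reduction to a subset of nearly full $\nu_q$-measure on which the $o(1)$ is uniform, together with a bounded-multiplicity (Besicovitch/Vitali) covering by $U$-sets; this is exactly the volume-lemma machinery underlying \cite{BHMane}. Everything else reduces to the two Gibbs estimates and to the triple-limit bookkeeping already isolated in Lemma \ref{lem_4.3}.
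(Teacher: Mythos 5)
Your parts (1) and (2) run on exactly the same engine as the paper's own proof: the Gibbs properties of $\nu$ and $\nu_q$, the comparison $\log r=\sum_{k=0}^{m(\omega)-1}\log|\Jac f(\pi(\sigma^k\omega))|^{-1}+O(1)$ coming from \eqref{eq_vol}, and Lemma \ref{lem_4.3} to trade $N$ for $m(\omega)$ inside the triple limit. This reproduces the forward inclusion $\pi(\hat{X}_q)\subseteq J_\nu(\alpha(q))$ of Lemma \ref{lem_3_part1} (which is all the proposition needs; the paper also proves the reverse inclusion, but only because it is used later for invariance and for Lemma \ref{lem_comp_lvd}) and the pointwise identity $\delta_{\nu_q,x}=q\,\delta_{\nu,x}+T(q)$ of Lemma \ref{lem_3_part2}, obtained there by combining \eqref{eq_4.12_1}--\eqref{eq_4.12_3} into \eqref{eq_4.12_4}. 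The divergence is in part (3). The paper settles both bounds at once by citing \cite[Proposition 4.22]{BHMane}: once $\nu_q$ is non-atomic, $\nu_q(J_\nu(\alpha(q)))=1$, and $\delta_{\nu_q,x}\equiv T(q)+q\alpha(q)$ on the set, that Billingsley-type volume lemma gives $\VD_{f,\nu_q}(J_\nu(\alpha(q)))=T(q)+q\alpha(q)$ immediately. Your lower bound --- admissibility of $J_\nu(\alpha(q))$ in the infimum defining $\VD_f(\nu_q)$, plus $\VD_f(\nu_q)=h_{\nu_q}/L_{\nu_q}=T(q)+q\alpha(q)$ from Theorem \ref{thm_BHMane_1.1} and the equilibrium identity --- is correct and is a clean alternative for that half. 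For the upper bound, the Egorov-type uniformization and bounded-multiplicity covering by $U$-sets that you flag as ``the only genuine obstacle'' are precisely the content of \cite[Proposition 4.22]{BHMane}, the very result the paper's strategy section announces as its tool; its hypotheses are exactly your (1), your (2), and non-atomicity of $\nu_q$. So there is no unsolved obstacle: citing that proposition closes the one incomplete step in your write-up, and with it your argument matches the paper's.
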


The proof of Proposition \ref{prop_lem3} consists of Lemmas \ref{lem_3_part1} and \ref{lem_3_part2}.

\begin{lemma}\label{lem_3_part1}
For each $q \in \mathbb R$, we have $\pi(\hat{X}_q) = J_{\nu}(\alpha(q))$. In particular, $\nu_q(J_{\nu}(\alpha(q)))=1$. 
\end{lemma}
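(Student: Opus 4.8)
The plan is to prove the set equality $\pi(\hat{X}_q) = J_\nu(\alpha(q))$ by establishing a pointwise identification: for each $\omega \in \Sigma_A^+$ with $\pi(\omega) = x$, the symbolic Birkhoff ratio condition defining $\hat{X}_q$ is equivalent to the statement that the local volume dimension $\delta_{\nu,x}$ exists and equals $\alpha(q)$. The ``in particular'' clause then follows immediately by combining this equality with Lemma \ref{lem_2}, which already gives $\nu_q(\pi(\hat{X}_q)) = 1$.

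First I would fix $\omega$ with $\pi(\omega) = x$ and unwind the definition of $\delta_{\nu,x}$, which is the triple limit (over $\epsilon \to 0$, $\kappa \to 0$, $N \to \infty$) of $\delta_{\nu,x}(\epsilon,\kappa,N) = \frac{\log \nu(U(x,\epsilon,\kappa,N))}{\log \sqrt{{\rm Vol}(U(x,\epsilon,\kappa,N))}}$. The denominator here is $\log r$ with $r = {\rm Vol}(U)^{1/2}$, exactly the normalization used in Lemma \ref{lem_4.3}. The key move is to invoke Lemma \ref{lem_4.3} (applied with $\mu = \nu$, the equilibrium state of $g$) to replace the level $N$ in the numerator by $m(\omega) = m(\omega,\epsilon,\kappa,N)$; this is legitimate precisely because the lemma shows the two triple limits coincide whenever either exists. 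Having switched to level $m(\omega)$, I would then apply the Gibbs property \eqref{eq_gibbs_nu} for $\nu$ (whose potential is $g$) to estimate $\log \nu(U(x,\epsilon,\kappa,m(\omega)))$ up to the bounded constants $\log C_1, \log C_2$ by the Birkhoff sum $\sum_{k=0}^{m(\omega)-1} g(\pi(\sigma^k(\omega)))$. Since these constants contribute $O(1)$ and the denominator $\log r \to -\infty$, they vanish in the limit.

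The heart of the computation is then to relate the two Birkhoff-type sums. By the defining inequality \eqref{eq_vol} for $m(\omega)$, we have $\log r \approx -\sum_{k=0}^{m(\omega)} \log|\Jac f(\pi(\sigma^k(\omega)))|^{-1} \cdot(-1)$, i.e. $\log r$ is comparable to $\sum_{k=0}^{m(\omega)} \log|\Jac f(\pi(\sigma^k(\omega)))|$ up to one extra term that is negligible in the limit. Consequently the ratio $\frac{\log\nu(U)}{\log r}$ becomes, up to vanishing error,
\begin{equation*}
\frac{\sum_{k=0}^{m(\omega)-1} g(\pi(\sigma^k(\omega)))}{\sum_{k=0}^{m(\omega)} \log|\Jac f(\pi(\sigma^k(\omega)))|} = \frac{\frac{1}{m}\sum_{k=0}^{m-1} g(\pi(\sigma^k(\omega)))}{-\frac{1}{m}\sum_{k=0}^{m} \log|\Jac f(\pi(\sigma^k(\omega)))|^{-1}},
\end{equation*}
writing $m = m(\omega)$. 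As $N \to \infty$ (hence $m(\omega) \to \infty$), this is exactly minus the reciprocal-free form of the symbolic ratio in \eqref{eq_def_symbolic_level}. Thus $\delta_{\nu,x}$ exists and equals $\alpha(q)$ if and only if the symbolic ratio defining $\hat{X}_q$ converges to $\alpha(q)$, giving $x \in J_\nu(\alpha(q)) \iff \omega \in \hat{X}_q$. Since $\pi$ is the factor map realizing the conjugacy and every $x \in X$ has a preimage, this yields $\pi(\hat{X}_q) = J_\nu(\alpha(q))$.

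The main obstacle I anticipate is the careful bookkeeping of the \emph{order} of the triple limit and the interaction between $m(\omega)$ and $N$. The quantity $m(\omega)$ depends on $\epsilon, \kappa, N$ through $r$, and one must verify that taking $\lim_{\epsilon\to 0}\lim_{\kappa\to 0}\lim_{N\to\infty}$ genuinely forces $m(\omega) \to \infty$ and that the Birkhoff averages converge along this scheme; the estimate \eqref{ineq:mwgeq} and the asymptotics already computed in Lemma \ref{lem_4.3} (showing $\frac{m(\omega)-N}{\log r} \to 0$) are what control this. A secondary subtlety is the factor-of-$\epsilon$ distortion built into $f^N(U) = B(f^N(x), \kappa e^{-NM\epsilon})$ and the $\epsilon$-dependence in \eqref{ineq:mwgeq}: one needs the outer limit $\epsilon \to 0$ to wash out these distortion terms so that the clean ratio $\alpha(q)$ emerges. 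I expect the ``existence'' direction (that membership in $\hat{X}_q$ forces the triple limit to exist, not merely the liminf/limsup to be bounded) to require the most care, since it must be extracted uniformly across the nested limits rather than along a single subsequence.
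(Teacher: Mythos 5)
Your machinery is the same as the paper's: apply Lemma \ref{lem_4.3} with $\mu=\nu$, use the Gibbs property \eqref{eq_gibbs_nu} to trade $\log\nu(U(x,\epsilon,\kappa,m(\omega)))$ for the Birkhoff sum of $g$, use the defining inequalities \eqref{eq_vol} to trade $\log r$ for the Birkhoff sum of $\log|\Jac f|^{-1}$, and get the ``in particular'' clause from Lemma \ref{lem_2}, exactly as the paper does. This correctly yields one inclusion, $\pi(\hat{X}_q)\subseteq J_\nu(\alpha(q))$: if $\omega\in\hat{X}_q$, the full-sequence Birkhoff ratio converges, hence it also converges along the indices $m(\omega,\epsilon,\kappa,N)$ (which tend to infinity with $N$ by \eqref{ineq:mwgeq}), and the Gibbs comparison then forces $\delta_{\nu,\pi(\omega)}=\alpha(q)$. (A minor slip: $\log r$ is comparable to $\sum_{k=0}^{m(\omega)}\log|\Jac f(\pi(\sigma^k(\omega)))|^{-1}$, both quantities being negative, not to $\sum_{k}\log|\Jac f|$; this sign bookkeeping does not affect the substance.)

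However, your concluding ``if and only if'' conceals the genuinely nontrivial half, and as written the reverse inclusion $J_\nu(\alpha(q))\subseteq\pi(\hat{X}_q)$ has a gap. Knowing $\delta_{\nu,x}=\alpha(q)$ gives, through your chain of comparisons, convergence of the Birkhoff ratio only along the scheme of indices $m(\omega,\epsilon,\kappa,N)$ inside the triple limit; membership in $\hat{X}_q$ demands convergence of the ordinary sequence as $n\to\infty$, and scheme convergence does not formally imply full convergence. Indeed, for fixed $\epsilon$ the difference $m(\omega)-N$ grows linearly in $N$ (with relative size of order $\epsilon$, plus $O(|\log\kappa|)$ terms, by \eqref{ineq:mwgeq}), so the attained indices form a sparse subsequence whose discrepancy from $N$ is not negligible until after $\epsilon\to0$. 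This is precisely what the second half of the paper's proof supplies: it sandwiches the time-$N$ ratio between time-$m(\omega)$ ratios corrected by $a_N=-(m(\omega)-N)\sup(\log|\Jac f|^{-1})$ and $b_N=-(m(\omega)-N)\inf g$, and uses the two-sided bound \eqref{asymp_mw2} to show that these corrections vanish in the triple limit. Without such an argument the reverse inclusion is unproved. Note also that your anticipated difficulty points at the wrong direction: the ``existence'' direction you single out (that $\omega\in\hat{X}_q$ forces the triple limit to exist) is the easy one, since a convergent sequence converges along every scheme of indices tending to infinity; it is the converse that requires the sandwich.
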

\begin{proof}
We first note that once we establish $\pi(\hat{X}_q) \subseteq J_{\nu}(\alpha(q))$, then $\nu_q(J_{\nu}(\alpha(q)))=1$ follows from Lemma \ref{lem_2}.

Now we show that $\pi(\hat{X}_q) \subseteq J_{\nu}(\alpha(q))$. 
Let $\omega \in \hat{X}_q$. By definition of $\hat{X}_q$, we have the limit 
\begin{equation}\label{eq_4.9_limit}
\lim_{n \to \infty} \frac{\sum_{k=0}^{n-1} g(\pi(\sigma^k(\omega)))}{\sum_{k=0}^{n-1} \log  |\Jac f(\pi(\sigma^k(\omega)))|^{-1}} = \alpha(q).
\end{equation}

We need to show $\pi(\omega) \in J_\nu(\alpha(q))$, i.e., $\delta_{\nu,\pi(\omega)} = \alpha(q)$. We first prove $\alpha(q) \le \delta_{\nu,\pi(\omega)}$.
Fix $\ep>0$. For any $0<\kappa<r(\ep),N>n(\ep)$, set $r = {\rm Vol}(U(\pi(\om),\ep,\kappa,N))^{1/2}$. By definition of $m(\omega,\ep,\kappa,N)$ in  \eqref{eq_vol} and
Gibbs property of $\nu$ \eqref{eq_gibbs_nu}, we have
\begin{align}\label{eq_4.10_1}
\frac{\sum_{k=0}^{m(\omega)-1} g(\pi(\sigma^k(\omega)))}{\sum_{k=0}^{m(\omega)-1}  \log |\Jac f(\pi(\sigma^k(\omega)))|^{-1}} 
& \le \frac{-\log C_2 + \log \nu(U(\pi(\omega),\ep,\kappa,m(\om)))}{\log {\rm Vol}(U(\pi(\omega),\ep,\kappa,N))^{1/2}}.
\end{align}

Take limits as $\ep \to 0, \kappa \to 0, N \to \infty$ on both sides of \eqref{eq_4.10_1}. For the left hand side, since equation \eqref{eq_4.9_limit} holds, we see that
$$\lim_{\ep \to 0}\lim_{\kappa \to 0} \lim_{N \to \infty} \frac{\sum_{k=0}^{m(\omega)-1} g(\pi(\sigma^k(\omega)))}{\sum_{k=0}^{m(\omega)-1} \log |\Jac f(\pi(\sigma^k(\omega)))|^{-1}} = \alpha(q).$$
For the right hand side, by Lemma \ref{lem_4.3},
we have
$$\lim_{\ep \to 0}\lim_{\kappa \to 0} \lim_{N \to \infty}\frac{-\log C_2 + \log \nu(U(\pi(\omega),\ep,\kappa,m(\om)))}{\log {\rm Vol}(U(\pi(\omega),\ep,\kappa,N))^{1/2}}=\delta_{\nu,\pi(\omega)}.$$
Therefore, we have
\begin{equation*}
\alpha(q) \le \delta_{\nu,\pi(\omega)}.
\end{equation*}

\medskip

Now we prove $\alpha(q) \ge \delta_{\nu,\pi(\omega)}$. 
Fix $\ep>0$. For any $0<\kappa<r(\ep),N>n(\ep)$, set $r = {\rm Vol}(U(\pi(\om),\ep,\kappa,N))^{1/2}$.
Again, by definition of $m(\omega,\ep,\kappa,N)$ in \eqref{eq_vol} and Gibbs property of $\nu$ \eqref{eq_gibbs_nu}, we have
\begin{align}\label{eq_4.10_2}
\frac{\sum_{k=0}^{m(\omega)-1} g (\pi(\sigma^k(\omega)))}{\sum_{k=0}^{m(\omega)+1}  \log |\Jac f(\pi(\sigma^k(\omega)))|^{-1}} 
& \ge \frac{-\log C_1  + \log \nu(U(\pi(\omega),\ep,\kappa,m(\om)))}{\log {\rm Vol}(U(\pi(\omega),\ep,\kappa,N))^{1/2}}.
\end{align}

Take limits as $\ep \to 0, \kappa \to 0, N \to \infty$ on both sides of \eqref{eq_4.10_2}. For the left hand side, since equation \eqref{eq_4.9_limit} holds, we see that
$$\lim_{\ep \to 0}\lim_{\kappa \to 0} \lim_{N \to \infty} \frac{\sum_{k=0}^{m(\omega)-1} g(\pi(\sigma^k(\omega)))}
{\sum_{k=0}^{m(\omega)+1}  \log |\Jac f(\pi(\sigma^k(\omega)))|^{-1}} = \alpha(q).$$
For the right hand side, by Lemma \ref{lem_4.3}, we have
$$\lim_{\ep \to 0}\lim_{\kappa \to 0} \lim_{N \to \infty}\frac{-\log C_1 +\log \nu(U(\pi(\omega),\ep,\kappa,m(\om)))}{\log {\rm Vol}(U(\pi(\omega),\ep,\kappa,N))^{1/2}}=\delta_{\nu,\pi(\omega)}.$$

Therefore, we have
\begin{equation*}
\alpha(q) \ge \delta_{\nu,\pi(\omega)}.
\end{equation*}
Hence, $\pi(\om) \in J_\nu(\alpha(q))$. This completes the proof of $\pi(\hat{X}_q) \subseteq J_{\nu}(\alpha(q))$.

Now we show $J_{\nu}(\alpha(q)) \subseteq \pi(\hat{X}_q)$. Let $x \in J_{\nu}(\alpha(q))$ and let $\omega \in \Sigma_A^+$ be such that $\pi(\om) =x$. We need to show that $\omega \in \hat{X}_q$. To this end, by the same calculation as in \eqref{eq_4.10_1} and \eqref{eq_4.10_2}, it suffices to show that
$$\lim_{\ep \to 0}\lim_{\kappa \to 0} \lim_{N \to \infty} \frac{\sum_{k=0}^{m(\omega)-1} g(\pi(\sigma^k(\omega)))}
{\sum_{k=0}^{m(\omega)+1}  \log |\Jac f(\pi(\sigma^k(\omega)))|^{-1}} = \lim_{\ep \to 0}\lim_{\kappa \to 0} \lim_{N \to \infty} \frac{\sum_{k=0}^{N-1} g(\pi(\sigma^k(\omega)))}
{\sum_{k=0}^{N-1}  \log |\Jac f(\pi(\sigma^k(\omega)))|^{-1}}.$$

Note that \eqref{asymp_mw} may be extended to
\begin{equation}\label{asymp_mw2}
   \frac{-\frac{k\log\kappa}{L_{\nu}-k\epsilon}+N\left(\frac{L_{\nu}+k(2M+2)\epsilon}{L_{\nu}-k\epsilon}-1\right)}{k\log\kappa -N(L_{\nu}-k\epsilon)} \leq\left(\frac{m(\om)-N}{\log r}\right)\leq \frac{-\frac{k\log\kappa}{L_{\nu}+k\epsilon}+N\left(\frac{L_{\nu}-k\epsilon}{L_{\nu}+k\epsilon}-1\right)-1}{k\log\kappa -N(L_{\nu}+k(2M+2)\epsilon)}.
\end{equation}
Now we have
\begin{align*}
\frac{\sum_{j=0}^{m(\om)}g(f^j(x))}{a_N+\sum_{j=0}^{m(\om)}\log|\Jac f(f^j(x))|^{-1}}
\le \frac{\sum_{j=0}^{N}g(f^j(x))}{\sum_{j=0}^{N}\log|\Jac f(f^j(x))|^{-1}}
\le \frac{b_N+\sum_{j=0}^{m(\om)}g(f^j(x))}{\sum_{j=0}^{m(\om)}\log|\Jac f(f^j(x))|^{-1}}
\end{align*}
where $a_N = -(m(\om)-N)\sup(\log|\Jac f|^{-1})$ and $b_N =-(m(\om)-N)\inf g$. The left most side of this inequality may be written as 
$$
    \frac{\frac{\sum_{j=0}^{m(\om)}g(f^j(x))}{\sum_{j=0}^{m(\om)}\log|\Jac f(f^j(x))|^{-1}}}{\frac{a_N}{\sum_{j=0}^{m(\om)}\log|\Jac f(f^j(x))|^{-1}}+1}
$$
and the right most side of this inequality may be written as
$$
    \frac{b_N}{\sum_{j=0}^{m(\om)}\log|\Jac f(f^j(x))|^{-1}}+\frac{\sum_{j=0}^{m(\om)}g(f^j(x))}{\sum_{j=0}^{m(\om)}\log|\Jac f(f^j(x))|^{-1}}
$$

Now by \eqref{eq_vol} and \eqref{asymp_mw2} we have that 
$$
\lim_{\epsilon\to0}\lim_{\kappa\to0}\lim_{N\to\infty}\frac{a_N}{\sum_{j=0}^{m(\om)}\log|\Jac f(f^j(x))|^{-1}}=0
$$
and
$$
\lim_{\epsilon\to0}\lim_{\kappa\to0}\lim_{N\to\infty}\frac{b_N}{\sum_{j=0}^{m(\om)}\log|\Jac f(f^j(x))|^{-1}} = 0.
$$
The conclusion follows. This completes the proof.
\end{proof}

\begin{remark}\label{rmk_Jnu_invariant}
Since $\hat{X}_q$ is $\sigma$-fully-invariant by Birkhoff's ergodic theorem, by the above lemma, we see that $J_{\nu}(\alpha(q))$ is $f$-fully-invariant.
\end{remark}

\begin{lemma}
\label{lem_3_part2}
For every $q \in \mathbb R$, we have
\begin{enumerate}
\item $\delta_{\nu_q,x} = T(q)+q\alpha(q)$ for every $x \in J_{\nu}(\alpha(q))$\label{nu_q_local_dim}; 
\item $\VD_{f,\nu_q}(J_{\nu}(\alpha(q))) = T(q)+q\alpha(q)$. 
\end{enumerate}
\end{lemma}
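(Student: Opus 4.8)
The plan is to establish (1) first --- the exact value of the local volume dimension of $\nu_q$ along $J_\nu(\alpha(q))$ --- and then deduce (2) from (1) together with the full-measure statement $\nu_q(J_\nu(\alpha(q)))=1$ and a volume-dimension Billingsley lemma. The computation in (1) is a close variant of the one already carried out in Lemma \ref{lem_3_part1}, except that the relevant Gibbs weights now come from the potential $\phi_q = qg - T(q)\log|\Jac f|$ and its equilibrium state $\nu_q$, and I extract the numerical value of the limit rather than merely comparing it to $\alpha(q)$.

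For (1), fix $x \in J_\nu(\alpha(q))$ and pick $\omega \in \Sigma_A^+$ with $\pi(\omega)=x$. Since $\pi\circ\sigma = f\circ\pi$, the Birkhoff ratio in \eqref{eq_def_symbolic_level} depends only on $x$, and Lemma \ref{lem_3_part1} (which gives $J_\nu(\alpha(q))=\pi(\hat X_q)$) yields
\[
\lim_{n \to \infty} \frac{\sum_{k=0}^{n-1} g(f^k(x))}{\sum_{k=0}^{n-1} \log|\Jac f(f^k(x))|^{-1}} = \alpha(q).
\]
Now fix $\epsilon>0$, $0<\kappa<r(\epsilon)$, $N>n(\epsilon)$, set $r = {\rm Vol}(U(x,\epsilon,\kappa,N))^{1/2}$, and let $m(\omega)$ be as in \eqref{eq_vol}. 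Applying the Gibbs property \eqref{eq_gibbs_nu} to $\nu_q$ with potential $\phi_q$ gives
\[
\log \nu_q(U(x,\epsilon,\kappa,m(\omega))) = \sum_{k=0}^{m(\omega)-1} \phi_q(f^k(x)) + O(1),
\]
and since $\phi_q = qg - T(q)\log|\Jac f|$ this equals $q\sum_{k=0}^{m(\omega)-1} g(f^k(x)) + T(q)\sum_{k=0}^{m(\omega)-1}\log|\Jac f(f^k(x))|^{-1} + O(1)$, while \eqref{eq_vol} shows $\log r = \sum_{k=0}^{m(\omega)-1}\log|\Jac f(f^k(x))|^{-1} + O(1)$. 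By Lemma \ref{lem_4.3} (applied to $\mu=\nu_q$) the triple limit defining $\delta_{\nu_q,x}$ is unchanged if the $U$-set in the numerator is taken at index $m(\omega)$ rather than $N$. Dividing numerator and denominator by $B:=\sum_{k=0}^{m(\omega)-1}\log|\Jac f(f^k(x))|^{-1}$, which tends to $-\infty$ as $N\to\infty$ (so $m(\omega)\to\infty$ by \eqref{ineq:mwgeq}), the bounded error terms wash out and the ratio converges to $T(q)+q\alpha(q)$ by the displayed Birkhoff limit. This proves $\delta_{\nu_q,x}=T(q)+q\alpha(q)$.

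For (2), write $d = T(q)+q\alpha(q)$. The lower bound is immediate from the definition of the volume dimension of a measure as an infimum over full-measure Borel sets: since $\nu_q(J_\nu(\alpha(q)))=1$ by Lemma \ref{lem_3_part1}, we have $\VD_{f,\nu_q}(J_\nu(\alpha(q))) \ge \VD_f(\nu_q) = h_{\nu_q}/L_{\nu_q}$ by Theorem \ref{thm_BHMane_1.1}; the zero-pressure identity $0 = h_{\nu_q} + q\int g\,d\nu_q - T(q)\int\log|\Jac f|\,d\nu_q$ together with the definition \eqref{eq_def_alpha_q} of $\alpha(q)$ then evaluates this ratio as exactly $d$. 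For the matching upper bound I would invoke the volume-dimension analogue of Billingsley's lemma \cite[Proposition 4.22]{BHMane}: since $\delta_{\nu_q,x}= d$ for \emph{every} $x \in J_\nu(\alpha(q))$ by part (1), the upper local volume dimension is bounded above by $d$ on the whole set, which forces $\VD_{f,\nu_q}(J_\nu(\alpha(q))) \le d$. Combining the two bounds gives (2).

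The main obstacle is the bookkeeping in (1): the local volume dimension is an iterated triple limit $\lim_{\epsilon}\lim_{\kappa}\lim_{N}$, and one must verify that the additive $O(1)$ errors from the Gibbs property, together with the one-term discrepancy between $\log r$ and the Birkhoff sum of $\log|\Jac f|^{-1}$, all vanish after division --- this is precisely what the asymptotic estimate \eqref{asymp_mw} and Lemma \ref{lem_4.3} are designed to guarantee. In (2) the delicate half is the upper bound, which requires controlling the overlaps of the covering $U$-sets (these have bounded multiplicity since they arise from inverse branches over a Markov partition); this geometric input is what I defer to \cite[Proposition 4.22]{BHMane}.
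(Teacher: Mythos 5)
Your proposal is correct and takes essentially the same approach as the paper: for (1), the paper likewise applies the Gibbs property at the stopping index $m(\omega)$, invokes Lemma \ref{lem_4.3} to pass between indices $m(\omega)$ and $N$, and feeds in the characterization of $J_\nu(\alpha(q))$ from Lemma \ref{lem_3_part1} (the paper routes this through the Gibbs property of $\nu$ and the identity $\delta_{\nu,x}=\alpha(q)$, whereas you inline the Birkhoff sums directly, which is the same computation); for (2), the paper concludes in one line from non-atomicity of $\nu_q$, $\nu_q(J_\nu(\alpha(q)))=1$, part (1), and \cite[Proposition 4.22]{BHMane}. Your separate lower bound via the infimum definition of $\VD_f(\nu_q)$, Theorem \ref{thm_BHMane_1.1}, and the zero-pressure identity is sound but redundant, since the cited Billingsley-type proposition already yields both inequalities.
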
 
\begin{proof}
We first prove statement (1). Fix $x \in J_{\nu}(\alpha(q))$. Let $\om \in \Sigma_A^+$ be such that $\pi(\om) = x$. Fix $\ep>0$. For any $0<\kappa<r(\ep),N>n(\ep)$, set $r = {\rm Vol}(U(\pi(\om),\ep,\kappa,N))^{1/2}$.

By Gibbs property of $\nu$ \eqref{eq_gibbs_nu} and that $g$ is bounded on the compact set $X$, we have
\begin{equation}\label{eq_4.12_1}
C_1 \le \frac{\nu(U(x,\ep,\kappa,m(\omega)))}{\prod_{k=0}^{m(\omega)} e^{g(\pi(\sigma^k(\omega)))}} \le C_2.
\end{equation}

By Gibbs property of $\nu_q$ and that $\phi_{q}$ is bounded on compact $X$, we have
\begin{equation}\label{eq_4.12_2}
C_1' \le \frac{\nu_q(U(x,\ep,\kappa,m(\omega)))}{\prod_{k=0}^{m(\omega)} |\Jac f(\pi(\sigma^k(\omega)))|^{-T(q)}(e^{g(\pi(\sigma^k(\omega)))})^q} \le C_2'.
\end{equation}

Moreover, we have
\begin{equation}\label{eq_4.12_3}
\prod_{k=0}^{m(\omega)+1} |\Jac f(\pi(\sigma^k(\omega)))|^{-1} \le r \text{ and } \prod_{k=0}^{m(\omega)-1} |\Jac f(\pi(\sigma^k(\omega)))|^{-1} \ge r.
\end{equation}

Combining \eqref{eq_4.12_1}, \eqref{eq_4.12_2} and \eqref{eq_4.12_3}, and for $N$ large enough, we obtain
\begin{align*}
\left(\frac{C_1'}{C_2^q}\right)r^{T(q)}\nu(U(x,\ep,\kappa,m(\omega)))^q \le 
\nu_q(U(x,\ep,\kappa,m(\omega))) \le \left(\frac{C_2'}{C_1^q}\right) r^{T(q)}\nu(U(x,\ep,\kappa,m(\omega)))^q
\end{align*}
which implies
\begin{align}\label{eq_4.12_4}
\frac{\log \left(\frac{C_2'}{C_1^q}\right)}{\log r} + T(q) + \frac{q \log \nu(U(x,\ep,\kappa,m(\omega)))}{\log r} &\le \frac{\log \nu_q(U(x,\ep,\kappa,m(\omega)))}{\log r} \\ \nonumber
&\le \frac{\log \left(\frac{C_1'}{C_2^q}\right)}{\log r} + T(q) + \frac{q \log \nu(U(x,\ep,\kappa,m(\omega)))}{\log r}.
\end{align}

Taking $\lim_{\ep \to 0}\lim_{\kappa \to 0}\lim_{N \to \infty}$ on each term in \eqref{eq_4.12_4}, by Lemma \ref{lem_4.3} and the fact that $x \in J_\nu(\alpha(q))$, we obtain
$\delta_{\nu_q,x}= T(q) + q \delta_{v,x} = T(q) + q\alpha(q)$.

\medskip

Statement (2) follows from the fact that $\nu_q$ is non-atomic, Lemma \ref{lem_3_part1}, \eqref{nu_q_local_dim} and \cite[Proposition 4.22]{BHMane}. This completes the proof of the lemma.
\end{proof}

\begin{corollary}\label{cor_hat_X}
For every $q \in \mathbb R$, we have
\begin{enumerate}
\item $\nu_q(\pi(\hat{X}_q)) =1$;
\item $\delta_{\nu_q,x} = T(q)+q\alpha(q)$ for every $x \in \pi(\hat{X}_q)$; 
\item $\VD_{f,\nu_q}(\pi(\hat{X}_q)) = T(q)+q\alpha(q)$. 
\end{enumerate}
\end{corollary}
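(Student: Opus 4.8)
The plan is to observe that this corollary is an immediate consequence of the set-theoretic identity already proved in Lemma \ref{lem_3_part1} together with Proposition \ref{prop_lem3}. The crucial input is the equality $\pi(\hat{X}_q) = J_\nu(\alpha(q))$ established in Lemma \ref{lem_3_part1}; once this is in hand, each of the three claimed statements is simply the corresponding statement of Proposition \ref{prop_lem3} with $J_\nu(\alpha(q))$ replaced by $\pi(\hat{X}_q)$.

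Concretely, I would argue as follows. For statement (1), Lemma \ref{lem_2} already gives $\nu_q(\pi(\hat{X}_q)) = 1$ directly via the Birkhoff ergodic theorem applied to the ratio of Birkhoff sums defining $\hat{X}_q$; alternatively one substitutes $\pi(\hat{X}_q) = J_\nu(\alpha(q))$ into Proposition \ref{prop_lem3} (1). For statement (2), I would take an arbitrary $x \in \pi(\hat{X}_q)$; by Lemma \ref{lem_3_part1} this $x$ lies in $J_\nu(\alpha(q))$, so Proposition \ref{prop_lem3} (2) (equivalently Lemma \ref{lem_3_part2}\eqref{nu_q_local_dim}) yields $\delta_{\nu_q,x} = T(q) + q\alpha(q)$. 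For statement (3), the set equality $\pi(\hat{X}_q) = J_\nu(\alpha(q))$ means the two volume dimensions $\VD_{f,\nu_q}(\pi(\hat{X}_q))$ and $\VD_{f,\nu_q}(J_\nu(\alpha(q)))$ are literally the same quantity, so the value $T(q) + q\alpha(q)$ from Proposition \ref{prop_lem3} (3) transfers verbatim.

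There is essentially no obstacle in this corollary: all of the analytic content—the Gibbs-property estimates, the comparison of $m(\omega)$ and $N$ truncations from Lemma \ref{lem_4.3}, and the invocation of \cite[Proposition 4.22]{BHMane} to pass from the pointwise local volume dimension to $\VD_{f,\nu_q}$—has already been carried out in the proofs of Lemmas \ref{lem_3_part1} and \ref{lem_3_part2}. The corollary functions as a convenient reformulation in terms of the symbolic level set $\hat{X}_q$, which is the object needed later for the symbolic spectra $\hat{\mathcal{F}}_\nu$ and $\mathcal{F}_\nu$ of Theorem \ref{thm_main_3}. Accordingly, the proof is a one-line application of the substitution $\pi(\hat{X}_q) = J_\nu(\alpha(q))$ to each part of Proposition \ref{prop_lem3}.
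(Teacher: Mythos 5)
Your proposal is correct and matches the paper's own proof: the paper likewise derives (1) from Lemma \ref{lem_2} and obtains (2) and (3) by combining the set identity $\pi(\hat{X}_q) = J_\nu(\alpha(q))$ of Lemma \ref{lem_3_part1} with Lemma \ref{lem_3_part2}. Nothing is missing; the substitution argument you describe is exactly the intended one-line reduction.
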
 
\begin{proof}
Statement (1) is Lemma \ref{lem_2}. Statement (2) and (3) follows from Lemma \ref{lem_3_part1} and Lemma \ref{lem_3_part2}. 
\end{proof}

\subsection{Proof of Theorem \ref{thm_main_1}}
\begin{proof}[Proof of Theorem \ref{thm_main_1}]
By Lemmas \ref{lem_temp_an}, \ref{lem_temp_de}, \ref{lem_temp_conv} and Remark \ref{rmk_T(0)}, to prove statement (1), it remains to show $T(1) = 0$. The argument is near identical to \cite[Theorem 21.1.4 (b)]{Urbanski22}. By definition, $T(1)$ is the unique number such that $P(\phi_{1,T(1)})=0$. Since, by assumption, $P(g) = 0$ and $g = \phi_{1,0}$, we conclude that $T(1)=0$. 

\medskip

For (2), $\hat{S}_\nu(\alpha(q)) = T(q) + q \alpha(q)$ follows from Proposition \ref{prop_lem3} (3) and the definition of $\hat{S}_\nu(\alpha(q))$. 
Using the fact $\mathcal{P}(q,T(q)) = 0$ and Theorem \ref{thm_BHMane_1.1}, we have
$\VD_f(\nu_q) = T(q) +q\alpha(q)$.
Since $T(q)$ is real-analytic and $\alpha(q)=-T'(q)$, we see that $\hat{S}_\nu(\alpha(q))$ is analytic in $q$.

\medskip

For (3), we show that $\alpha \mapsto \hat{S}_\nu(\alpha)$ is concave. Then the conclusion follows from (1) and (2).
Taking derivative with respect to $q$ on both sides of $\hat{S}_\nu(\alpha(q)) = T(q)+q\alpha(q)$ and using $T'(q)=-\alpha(q)$ and $\nu \neq \nu_0$ (so that $\alpha'(q)\neq 0$) we obtain
\begin{equation*}
\frac{d}{d\alpha}\hat{S}_{\nu}(\alpha(q)) = q.
\end{equation*}
Taking derivative with respect to $q$ on both sides of the above equation and recalling from Lemma \ref{lem_temp_conv} that $T$ is strictly convex in this case we obtain
\begin{equation*}
\frac{d^2}{d\alpha^2}\hat{S}_{\nu}(\alpha(q)) = \frac{1}{\alpha'(q)}<0. 
\end{equation*}
Then it is straightforward to see that the function $\alpha \mapsto -\hat{S}_\nu(-\alpha)$ is strictly convex and form a Legendre transform pair with $q \mapsto T(q)$.

For (4), if $\nu = \nu_0$, by the proof of Lemma \ref{lem_temp_conv}, we see that $T''(q) = 0$. Therefore, $T'(q) = -\alpha(q)$ is constant.
\end{proof}

\section{Proof of Theorem \ref{thm_main_1.5}} \label{sec_main_1.5}
By Lemma \ref{lem_3_part1} and Remark \ref{rmk_Jnu_invariant}, we see that $J_\nu(\alpha(q))$ is invariant. Therefore it is an expanding set.
Let $\mu_q$ be the $\delta_{J_{\nu}(\alpha(q))}$-volume conformal measure on $J_\nu(\alpha(q))$ as in \cite[Theorem 5.6]{BHMane}.

Recall from \cite{BHMane} that a probability measure $\nu$ on $J_{\nu}(\alpha(q))$ is {\it $t$-volume-conformal on $J_{\nu}(\alpha(q))$}
if, for every Borel subset $A\subset J_{\nu}(\alpha(q))$ on which $f$ is invertible, we have
\begin{equation*}
	\nu (f (A)) =  \int_A |\Jac f|^t d\nu.
\end{equation*}
We define
\[\begin{aligned}
	\delta_{J_{\nu}(\alpha(q))} (f)  
	& := \inf \left\{ t \ge 0 \colon \mbox{ there exists  a $t$-volume-conformal
		measure on } J_{\nu}(\alpha(q))\right\}. 
\end{aligned}\]

\begin{lemma}\label{lem_comp_lvd}
The $\delta_{J_{\nu}(\alpha(q))}$-volume conformal measure $\mu_q$ supported on $J_{\nu}(\alpha(q))$ and the equilibrium state $\nu_q$ of $\phi_{q}$ are the same; $\mu_q = \nu_q$.
\end{lemma}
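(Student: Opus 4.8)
The plan is to recognize both $\nu_q$ and $\mu_q$ as the \emph{unique equilibrium state of the single geometric potential} $-\delta\log|\Jac f|$ on $J_\nu(\alpha(q))$, where $\delta:=T(q)+q\alpha(q)$, and then to match this with \cite[Theorem 5.6]{BHMane}. The first ingredient I would record is a rigidity property of the invariant measures carried by the level set: for every $\rho\in\mathcal{M}^+_{J_\nu(\alpha(q))}(f)$,
\[
\int_X g\,d\rho = -\alpha(q)\int_X\log|\Jac f|\,d\rho .
\]
This follows from Lemma \ref{lem_3_part1}, which gives $J_\nu(\alpha(q))=\pi(\hat{X}_q)$, together with Birkhoff's ergodic theorem: lifting an ergodic $\rho$ to $\widetilde\rho$ on $\Sigma_A^+$ with $\widetilde\rho(\hat{X}_q)=1$, the ratio $\big(\sum_{k=0}^{n-1} g(\pi(\sigma^k(\omega)))\big)\big/\big(\sum_{k=0}^{n-1}\log|\Jac f(\pi(\sigma^k(\omega)))|^{-1}\big)$ converges $\widetilde\rho$-a.e.\ both to $\alpha(q)$, by the definition of $\hat{X}_q$, and to $\int g\,d\rho\big/\int\log|\Jac f|^{-1}d\rho$, by Birkhoff (the denominator converging to the nonzero value $-\int\log|\Jac f|\,d\rho$ since $X$ is uniformly expanding); the general case follows by ergodic decomposition, using that $J_\nu(\alpha(q))$ is invariant.

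Next, with $\delta:=T(q)+q\alpha(q)$, the identity above yields $\int_X\phi_q\,d\rho=-\delta\int_X\log|\Jac f|\,d\rho$ for every $\rho\in\mathcal{M}^+_{J_\nu(\alpha(q))}(f)$, whence
\[
h_\rho(f)-\delta\int_X\log|\Jac f|\,d\rho = h_\rho(f)+\int_X\phi_q\,d\rho \le \mathcal{P}(q,T(q))=0,
\]
with equality precisely when $\rho$ is the equilibrium state of $\phi_q$, i.e.\ $\rho=\nu_q$; and $\nu_q$ is admissible since $\nu_q(J_\nu(\alpha(q)))=1$ by Proposition \ref{prop_lem3}(1). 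Thus the variational pressure of $-\delta\log|\Jac f|$ over $\mathcal{M}^+_{J_\nu(\alpha(q))}(f)$ vanishes and is attained uniquely at $\nu_q$. Since $t\mapsto\mathcal{P}_{J_\nu(\alpha(q))}(-t\log|\Jac f|)$ is strictly decreasing (the integrals $\int\log|\Jac f|\,d\rho$ being bounded below by a positive constant, the Lyapunov exponents exceeding $\log\eta$), this pins $\delta$ as the critical exponent $\delta_{J_\nu(\alpha(q))}=\VD_f(J_\nu(\alpha(q)))$ and identifies $\nu_q$ as the unique equilibrium state of $-\delta_{J_\nu(\alpha(q))}\log|\Jac f|$.

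To conclude I would invoke \cite[Theorem 5.6]{BHMane}, which describes the $\delta_{J_\nu(\alpha(q))}$-volume conformal measure $\mu_q$ on the expanding set $J_\nu(\alpha(q))$; combined with the previous step this gives $\mu_q=\nu_q$. I expect the main obstacle to be precisely this last identification: one must confirm that the measure furnished by \cite[Theorem 5.6]{BHMane} is exactly the equilibrium state of $-\delta_{J_\nu(\alpha(q))}\log|\Jac f|$ isolated above (rather than a conformal eigenmeasure that could differ from it by the transfer-operator eigenfunction), and one must interpret the pressure and equilibrium state ``on $J_\nu(\alpha(q))$'' correctly, since $J_\nu(\alpha(q))$ is an invariant Borel rather than closed subset of $X$. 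The reconciling facts are that $\nu_q$ has full support with $\nu_q(J_\nu(\alpha(q)))=1$ and that $\mathcal{P}(q,T(q))=0$, which is what permits comparing the supremum over $\mathcal{M}^+_{J_\nu(\alpha(q))}(f)$ directly with the global pressure of $\phi_q$.
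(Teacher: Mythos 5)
Your proposal is correct and takes essentially the same route as the paper's proof: both arguments rest on the Birkhoff-ratio rigidity $\int g\,d\rho=-\alpha(q)\int\log|\Jac f|\,d\rho$ for measures carried by the level set (via Lemma \ref{lem_3_part1}), on the vanishing of $\mathcal{P}(q,T(q))$ together with the variational principle, on identifying $\delta_{J_{\nu}(\alpha(q))}=T(q)+q\alpha(q)$, and on \cite[Theorem 5.6]{BHMane} plus uniqueness of equilibrium states; the paper merely organizes this as a two-sided inequality for $\delta_{J_{\nu}(\alpha(q))}$ (using $\nu_q$ for one direction and $\mu_q$ for the other) rather than your route of pinning $\delta$ as the unique zero of the restricted pressure. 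The obstacle you flag at the end is resolved in the paper exactly as you anticipate: the free-energy identity $h_{\mu_q}(f)=\delta_{J_{\nu}(\alpha(q))}\int\log|\Jac f|\,d\mu_q$ supplied by \cite[Theorem 5.6]{BHMane} is what allows one to conclude that $\mu_q$ attains the supremum in the variational principle for $\phi_q$, whence $\mu_q=\nu_q$ by uniqueness.
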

\begin{proof}
We first show that $\delta_{J_{\nu}(\alpha(q))} = T(q)+q\alpha(q)$. Since $\mu_q$ is the $\delta_{J_{\nu}(\alpha(q))}$-volume conformal measure on $J_{\nu}(\alpha(q))$, then by \cite[Theorem 5.6]{BHMane}, this $\delta_{J_{\nu}(\alpha(q))}$ is such that $\mathcal{P}_{J_{\nu}(\alpha(q))}^+(\phi_{0,\delta_{J_{\nu}(\alpha(q))}}) = 0$. Then for $\nu_q$, we have
\begin{align*}
    0&=\mathcal{P}_{J_{\nu}(\alpha(q))}^+(\phi_{0,\delta_{J_{\nu}(\alpha(q))}})\geq h_{\nu_q}(f)-t\int_{J_{\nu}(\alpha(q))}\log|\Jac f|d\nu_q
\end{align*}
which implies $\delta_{J_{\nu}(\alpha(q))}\geq \frac{h_{\nu_q}}{L_{\nu_q}} = T(q)+q\alpha(q).$

Now define $\alpha(\mu_q) = \frac{\int_{J_{\nu}(\alpha(q))}g d\mu_q}{-\int_{J_{\nu}(\alpha(q))} \log|\Jac f| d\mu_q}$, by Birkhoffs Ergodic theorem and Lemma \ref{lem_3_part1},
\begin{equation}\label{alphamu}
    \alpha(\mu_q) = \frac{\int_{X}g d\mu_q}{-\int_{X} \log|\Jac f| d\mu_q} =\lim_{n \to \infty} \frac{\sum_{k=0}^{n-1} g(\pi(\sigma^k(\omega)))}{\sum_{k=0}^{n-1} \log  |\Jac f(\pi(\sigma^k(\omega)))|^{-1}} = \alpha(q)
\end{equation}
as the ratio of Birkhoff sums is constant and equal to $\alpha(q)$ on $J_{\nu}(\alpha(q))$ and $\mu_q$ is fully supported on $J_{\nu}(\alpha(q))$.  And so, as $T(q)$ is the unique value for which $\cP(q,T(q)) =0$ then for $\mu_q$, we have 
\begin{align}
    0&=\cP(q,T(q)) \geq h_{\mu_q}(f)-T(q)\int_{J_{\nu}(\alpha(q))}\log|\Jac f|d\mu_q+q\int_{J_{\nu}(\alpha(q))} g d\mu_q \notag \\&=h_{\mu_q}(f)-\delta_{J_{\nu}(\alpha(q))}\int_{J_{\nu}(\alpha(q))}\log|\Jac f|d\mu_q+(\delta_{J_{\nu}(\alpha(q))}-T(q))\int_{J_{\nu}(\alpha(q))}\log|\Jac f|d\mu_q+q\int_{J_{\nu}(\alpha(q))} g d\mu_q \notag
    \\& = (\delta_{J_{\nu}(\alpha(q))}-T(q))\int_{J_{\nu}(\alpha(q))}\log|\Jac f|d\mu_q+q\int_{J_{\nu}(\alpha(q))} g d\mu_q. \label{free_energy_muq} \\ \notag
\end{align}
which implies $\delta_{J_{\nu}(\alpha(q))} \leq T(q)+q\alpha(\mu_q) = T(q)+q\alpha(q).$

Hence $\delta_{J_{\nu}(\alpha(q))} = T(q)+q\alpha(q)$. For this value of $\delta_{J_{\nu}(\alpha(q))}$ it is easy to see from \eqref{free_energy_muq} that the free energy of $\mu_q$ with respect to the potential $\phi_q$ is $0 = \mathcal{P}(q,T(q))$, so $\mu_q$ is the unique equilibrium state for $\phi_q$; $\nu_q = \mu_q$. 
\end{proof}

Now we give a proof of Theorem \ref{thm_main_1.5}.
\begin{proof}[Proof of Theorem \ref{thm_main_1.5}]
By \cite[Theorem 5.6]{BHMane}, Lemma \ref{lem_comp_lvd} and Theorem \ref{thm_main_1} (2), we have $$\VD_f(J_\nu(\alpha(q))) = \VD_f(\mu_q)=\VD_f(\nu_q)= T(q) + q\alpha(q).$$ On the other hand, by Theorem \ref{thm_main_1} (2), we have $\VD_{f,\nu_q}(J_\nu(\alpha(q))) = T(q) + q\alpha(q)$. Therefore $\VD_{f,\nu_q}(J_\nu(\alpha(q))) = \VD_f(J_\nu(\alpha(q)))$, i.e., $S(\alpha(q)) = \hat{S}(\alpha(q))$. The conclusion follows from Theorem \ref{thm_main_1}.
\end{proof}

\section{Proof of Theorem \ref{thm_main_2}} \label{sec_main_2}
In this section, we prove Theorem \ref{thm_main_2}. We first prove some preparatory lemmas in Section \ref{sec_5.1} and prove Theorem \ref{thm_main_2} in Section \ref{sec_5.2}.

\subsection{Preliminary lemmas} \label{sec_5.1}

\begin{lemma}\label{thm_main_var_temp}
For each $q \in \mathbb R$, we have 
$$-T(q) = \inf_{\rho \in \mathcal{M}^+_{X}(f)} \frac{h_{\rho}(f)+q\int_X g d\rho}{-\int_X \log |\Jac f| d\rho}.$$
%where the supremum is taken over all the invariant ergodic probability measures $\rho$ on $X$. 
Moreover, the supremum is realized at $\rho = \nu_q$.
\end{lemma}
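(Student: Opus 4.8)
The plan is to recognize the right-hand side as a Bowen-type equation locating the zero of the pressure, and then to read off the extremizer directly from the variational principle. First I would rewrite $T(q)$ through the variational principle recorded in Section \ref{sec_3.2_pressure}: for every $s \in \mathbb R$,
\[
\mathcal{P}(q,s) = \sup_{\rho \in \mathcal{M}^+_X(f)} \left\{ h_\rho(f) + q\int_X g\, d\rho - s\int_X \log|\Jac f|\, d\rho \right\}.
\]
Since $X$ is uniformly expanding, every $\rho \in \mathcal{M}^+_X(f)$ has strictly positive Lyapunov exponents, so $L_\rho := \int_X \log|\Jac f|\, d\rho > 0$. This strict positivity is the one structural hypothesis I would flag as essential, as it is what makes division by $L_\rho$ legitimate and lets me convert between additive and ratio inequalities without sign ambiguity.

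Next I would run the standard Bowen argument. Writing $H(\rho) := h_\rho(f) + q\int_X g\, d\rho$, for each fixed $\rho$ the inequality $H(\rho) - sL_\rho \le 0$ holds exactly when $s \ge H(\rho)/L_\rho$. Taking the supremum over $\rho$ shows that $\mathcal{P}(q,s) \le 0$ if and only if $s \ge \sup_\rho H(\rho)/L_\rho =: s^\star$, and that $\mathcal{P}(q,s) > 0$ whenever $s < s^\star$. Because $s \mapsto \mathcal{P}(q,s)$ is continuous and strictly decreasing with its unique zero at $T(q)$ (Lemma \ref{lem_property_qt} \eqref{ITM:SDP}), these two implications combine — via continuity at $s^\star$ and uniqueness of the zero — to force $T(q) = s^\star = \sup_\rho H(\rho)/L_\rho$. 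Negating and again using $L_\rho>0$ then gives
\[
-T(q) = -\sup_{\rho} \frac{H(\rho)}{L_\rho} = \inf_{\rho \in \mathcal{M}^+_X(f)} \frac{h_\rho(f) + q\int_X g\, d\rho}{-\int_X \log|\Jac f|\, d\rho},
\]
which is precisely the claimed identity.

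Finally, to identify the extremizer I would invoke that $\nu_q$ is the equilibrium state of $\phi_q = \phi_{q,T(q)}$, so it realizes the supremum in the variational principle at $s = T(q)$; together with $\mathcal{P}(q,T(q)) = 0$ this yields $H(\nu_q) - T(q)L_{\nu_q} = 0$, i.e. $H(\nu_q)/L_{\nu_q} = T(q) = s^\star$. Hence $\nu_q$ attains the supremum defining $s^\star$, equivalently the infimum in the displayed formula, which settles the ``realized at $\rho=\nu_q$'' assertion. I do not expect a genuine obstacle: the substance is the Bowen-equation manipulation plus positivity of $L_\rho$. The only points requiring care are the bookkeeping of signs when passing from the supremum formulation to the infimum in the statement, and noting that the extremum is genuinely attained (guaranteed by the existence of the equilibrium state $\nu_q$) rather than merely approached.
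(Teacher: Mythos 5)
Your proof is correct and follows essentially the same route as the paper: both arguments rest on the variational principle for $\mathcal{P}(q,s)$, the positivity of $\int_X \log|\Jac f|\,d\rho$ coming from uniform expansion, and the fact that the equilibrium state $\nu_q$ attains the supremum at $s=T(q)$ where the pressure vanishes. Your detour through the Bowen-type characterization of the zero for all $s$ (monotonicity plus uniqueness of the zero) is a slightly more elaborate packaging of what the paper does by evaluating the variational principle only at $s=T(q)$ and dividing by $L_\rho>0$, but it is the same underlying argument.
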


\begin{proof}
Since $\mathcal{P}(\phi_g)=0$, we have 
$0 = \sup_{\rho} h_{\rho}+\int_X qg-T(q)\log|\Jac f| d\rho$, where the supremum is taken over invariant ergodic probability measures on $X$ and the supremum is realized at $\nu_q$. Therefore
$$T(q) = \sup_{\rho} \frac{h_\rho + q\int_X gd\rho}{\int_X \log|\Jac f| d\rho}$$
and the conclusion follows.
\end{proof}

\begin{lemma} \label{lem_alpha12}
For any $q \in \mathbb R$, we have
$$\alpha_1 = \inf_{\rho \in \mathcal{M}^+_{X}(f)}\frac{-\int_X g d\rho}{\int_X \log |\Jac f| d\rho} \text{ and } \alpha_2 = \sup_{\rho \in \mathcal{M}^+_{X}(f)}\frac{-\int_X g d\rho}{\int_X \log |\Jac f| d\rho}.$$

\end{lemma}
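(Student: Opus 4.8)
The plan is to prove the two-sided bound $\alpha_1 \le R(\rho) \le \alpha_2$ for every $\rho \in \mathcal{M}_X^+(f)$, where I abbreviate $R(\rho) := \frac{-\int_X g\,d\rho}{\int_X \log|\Jac f|\,d\rho}$, and then to note that both endpoints are attained in the limit along the family $\{\nu_q\}$. First I would record that $\int_X \log|\Jac f|\,d\rho = L_\rho > 0$ for every $\rho \in \mathcal{M}_X^+(f)$, since $X$ is uniformly expanding and hence all Lyapunov exponents are positive; this makes $R$ well defined and legitimizes the divisions below. Because $\alpha(q) = R(\nu_q)$ by \eqref{eq_def_alpha_q} and the range of $q \mapsto \alpha(q)$ is $[\alpha_1,\alpha_2]$, we immediately get $\inf_\rho R(\rho) \le \alpha_1$ and $\sup_\rho R(\rho) \ge \alpha_2$. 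All the content lies in the reverse inequalities $R(\rho) \ge \alpha_1$ and $R(\rho) \le \alpha_2$.

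For these, I would extract from $\mathcal{P}(q,T(q)) = 0$ together with the variational principle (equivalently, from Lemma \ref{thm_main_var_temp}) the pointwise estimate
\[
h_\rho(f) + q\int_X g\,d\rho - T(q)\int_X \log|\Jac f|\,d\rho \le 0 \qquad \text{for all } \rho \in \mathcal{M}_X^+(f),\ q \in \mathbb R.
\]
Dividing through by $L_\rho > 0$, recognizing that $\frac{\int_X g\,d\rho}{L_\rho} = -R(\rho)$, and discarding the nonnegative term $h_\rho/L_\rho \ge 0$, this rearranges to the clean inequality
\[
T(q) + q\,R(\rho) \ge 0 \qquad \text{for all } q \in \mathbb R.
\]
The slack in this inequality is exactly $h_\rho/L_\rho$, which is why it need not be tight for any single $q$ but becomes tight in the limit.

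Finally I would feed in the asymptotics of the temperature function. Since $T'(q) = -\alpha(q)$ by Lemma \ref{lem_temp_de}, and $\alpha(q)$ decreases to $\alpha_1 = \alpha(\infty)$ as $q \to +\infty$ and increases to $\alpha_2 = \alpha(-\infty)$ as $q \to -\infty$ by Corollary \ref{cor_alpha_q} and Remark \ref{rmk_range_alpha(q)}, a Ces\`aro/averaging argument applied to $T(q)/q = T(0)/q + \frac1q\int_0^q T'(s)\,ds$ gives $\lim_{q\to+\infty} T(q)/q = -\alpha_1$ and $\lim_{q\to-\infty} T(q)/q = -\alpha_2$. Dividing $T(q) + q\,R(\rho) \ge 0$ by $q>0$ and letting $q \to +\infty$ yields $R(\rho) \ge \alpha_1$; dividing by $q<0$ (which reverses the inequality) and letting $q \to -\infty$ yields $R(\rho) \le \alpha_2$. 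Combined with the easy direction, this gives the claimed formulas for $\alpha_1$ and $\alpha_2$.

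The main obstacle is precisely the asymptotic-slope computation $\lim_{q\to\pm\infty} T(q)/q = -\alpha_{1,2}$: it relies on $\alpha(q)$ genuinely converging to the endpoints of its range (not merely having that range), so I would justify it through the strict monotonicity in Corollary \ref{cor_alpha_q} and the finiteness and positivity bounds in Remark \ref{rmk_range_alpha(q)}. In the degenerate case $\nu = \nu_0$ these bounds collapse to $\alpha_1 = \alpha_2 = \VD_f(X)$, and the two-sided estimate forces $R(\rho) = \VD_f(X)$ for every $\rho$; this is consistent with $g$ being cohomologous to $-\VD_f(X)\log|\Jac f|$ (as shown in the proof of Lemma \ref{lem_temp_conv}), which makes $R(\rho)$ constant across all invariant measures.
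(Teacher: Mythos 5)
Your proof is correct, and it reaches the hard inequalities $R(\rho)\ge\alpha_1$, $R(\rho)\le\alpha_2$ by a genuinely different mechanism than the paper, even though both arguments share the same skeleton (identical easy direction, a variational-principle inequality, then divide by $q$ and send $q\to\pm\infty$). Your key inequality $T(q)+qR(\rho)\ge h_\rho/L_\rho\ge 0$ is essentially the content of Lemma \ref{thm_main_var_temp}, so up to that point you and the paper agree. The divergence is in how the growth of $T(q)$ is tied to $\alpha_1,\alpha_2$: the paper invokes the multifractal spectrum identity $\VD_{f,\nu_q}(\pi(\hat{X}_q))=T(q)+q\alpha(q)$ (Corollary \ref{cor_hat_X}) together with the uniform dimension bound $\VD\le 2$ from \cite{BHMane}, which sandwiches $-T(q)$ between $q\alpha_1-2$ and $q\inf_\rho R(\rho)+2$ and lets the additive constant $4$ die after dividing by $q$; you instead compute the exact asymptotic slopes $\lim_{q\to\pm\infty}T(q)/q=-\alpha_{1},-\alpha_2$ from $T'(q)=-\alpha(q)$ (Lemma \ref{lem_temp_de}), the monotone convergence of $\alpha(q)$ to the endpoints (Corollary \ref{cor_alpha_q} and Remark \ref{rmk_range_alpha(q)}), and a Ces\`aro average of $T'$. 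What your route buys: it is more elementary and self-contained, using only the Section \ref{sec_temp} thermodynamics and no dimension-theoretic input (neither Corollary \ref{cor_hat_X} nor the bound $\VD\le2$), and it treats $\alpha_1$ and $\alpha_2$ symmetrically in one stroke, whereas the paper proves the $\alpha_1$ case and defers $\alpha_2$ to ``the same argument.'' What the paper's route buys: it needs only crude bounds on $T$ rather than its exact asymptotic slope, so it sidesteps any discussion of whether $\alpha(q)$ actually converges to the endpoints of its range (the point you correctly flag as the main obstacle), leaning instead on machinery already established. One small bookkeeping point in your favor: your derivation never needs $h_\rho/L_\rho$ to equal $\VD_f(\rho)$, only its nonnegativity, so you avoid even Theorem \ref{thm_BHMane_1.1}; and your closing observation that the degenerate case $\nu=\nu_0$ forces $R(\rho)\equiv\VD_f(X)$ is consistent with the cohomology statement in the proof of Lemma \ref{lem_temp_conv}.
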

\begin{proof}
We prove the statement for $\alpha_1$. The statement for $\alpha_2$ can be proved using the same argument.
We first observe that
$\alpha_1 \ge \inf_{\mu \in \mathcal{M}^+_{X}(f)}\frac{-\int_X g d\mu}{\int_X \log |\Jac f| d\mu}$ as by definition, $\alpha_1 := \inf_{q\in \mathbb R}\frac{-\int_X g d\nu_q}{\int_X \log |\Jac f| d\nu_q}$.

Now we prove $\alpha_1 \le \inf_{\mu \in \mathcal{M}_{X}(f)}\frac{-\int_X g d\mu}{\int_X \log |\Jac f| d\mu}$. Since $\alpha_1 \le \alpha(q)$ for any $q \in \mathbb{R}$, by Proposition \ref{lem_2} (3), Theorem \ref{thm_main_var_temp} and $\VD_f(\nu)\le 2$ for any $\nu \in \mathcal{M}_X^+(f)$ (see \cite[Lemma 4.18]{BHMane}), for any $q \ge 0$, we have
\begin{align*}
-\VD_{f,\nu_q}(\pi(\hat{X}_q)) + q\alpha_1 & \le -\VD_{f,\nu_q}(\pi(\hat{X}_q)) +q\alpha(q)= -T(q) \\
&=\inf_{\rho} \left\{\VD_f(\rho)+q \frac{\int_X g d\rho}{-\int_X \log |\Jac f|d\rho} \right\}\\
&\le 2 + q \inf_{\rho} \frac{\int_X g d\rho}{-\int_X \log |\Jac f|d\rho}.
\end{align*}
Since $\VD_{f,\nu_q}(\pi(\hat{X}_q))\le 2$ for any $q \in \mathbb R$ (see \cite[Lemma 4.10]{BHMane}), we have for any $q \ge 0$, $$q\alpha_1 \le 4 + q \inf_{\rho} \frac{\int_X g d\rho}{-\int_X \log |\Jac f|d\rho}.$$
Dividing both sides by $q$ and letting $q \to \infty$, we obtain
$$\alpha_1 \le \inf_{\rho} \frac{-\int_X g d\rho}{\int_X \log |\Jac f|d\rho}.$$
This completes the proof.
\end{proof}

\subsection{Proof of Theorem \ref{thm_main_2}} \label{sec_5.2}
\begin{proof}[Proof of Theorem \ref{thm_main_2}]
We first show that $\alpha_1 = \inf_{x\in X} \underline{\delta}_{\nu,x}$. Suppose $\alpha_1 < \inf_{x\in X} \underline{\delta}_{\nu,x}$. This implies that there exists a large $q_0 \gg 1$ such that $J_{\nu}(\alpha(q_0))$ is empty, which contradicts Proposition \ref{prop_lem3}. Therefore $\alpha_1 \ge \inf_{x\in X} \underline{\delta}_{\nu,x}$. 

Now suppose $\alpha_1 > \inf_{x\in X} \underline{\delta}_{\nu,x}$. Then using Lemma \ref{lem_3_part1}, there exist $\eta>0, x\in X$ and a sequence $\{n_k\} \subset \mathbb N$ such that for each $k \ge 1$, we have
$$\frac{-\sum_{j=0}^{n_k}g(f^j(x))}{\sum_{j=0}^{n_k}\log|\Jac f(f^j(x))|} \le \alpha_1 -\eta.$$

For each $k\ge 1$, consider the measure $\rho_k := \frac{1}{n_k}\sum_{j=0}^{n_k} \delta_{f^j(x)}$ where $\delta_y$ denotes the Dirac mass at the point $y$. Then we have
$$\frac{-\int_X gd\rho_k}{\int_X \log |\Jac f| d\rho_k} \le \alpha_1 -\eta.$$
Let $\rho$ be an accumulation point of the sequence $\{\rho_k\}_{k \ge 1}$ of measures. Then taking the limit $k \to \infty$, we have
$$\frac{-\int_X gd\rho}{\int_X \log |\Jac f| d\rho} \le \alpha_1 -\eta.$$
If $\rho$ is ergodic, then this contradicts Lemma \ref{lem_alpha12}. If $\rho$ is not ergodic, then since the set of periodic measures is dense, we approximate $\rho$ by a sequence of periodic measures $\tilde{\rho}_n$. Then there exists a sufficiently large $n$ such that $\frac{-\int_X gd\tilde{\rho}_n}{\int_X \log |\Jac f| d\tilde{\rho}_n} \le \alpha_1 -\eta/2$, which also contradicts Lemma \ref{lem_alpha12}. Therefore we must have $\alpha_1 = \inf_{x\in X} \underline{\delta}_{\nu,x}$.  

The statement $\alpha_2 = \sup_{x\in X} \overline{\delta}_{\nu,x}$ can be proved in a similar way.
\end{proof}

\section{Proof of Theorem \ref{thm_main_3}} \label{sec_main_3}
\begin{proof}[Proof of Theorem \ref{thm_main_3}]

By Corollary \ref{cor_hat_X} and the proof of Theorem \ref{thm_main_1}, we see that Theorem \ref{thm_main_1} (2) and (3) hold for $\hat{\mathcal{F}}_\nu(\alpha(q))$.
For ${\mathcal{F}}_\nu(\alpha(q))$, by Lemmas \ref{lem_3_part1} and \ref{lem_comp_lvd}, we have $\hat{\mathcal{F}}_\nu(\alpha(q))= T(q) + q\alpha(q) = \mathcal{F}_\nu(\alpha(q))$. This completes the proof.
\end{proof}

\section{Proof of Corollary \ref{thm_main_4}} \label{sec_main_4}
\begin{proof}[Proof of Corollary \ref{thm_main_4}]
Since for every $\alpha \in [\alpha_1,\alpha_2]$, we have $\hat{S}_{\nu}(\al) = S_{\nu}(\al) = \hat{\mathcal{F}}_{\nu}(\al) = \mathcal{F}_{\nu}(\al)$, we prove the corollary for $\hat{S}_{\nu}(\al)$. By Theorem \ref{thm_main_1}, Lemma \ref{lem_comp_lvd} and \cite[Theorem 5.6]{BHMane}
we have
\begin{align*}
\hat{S}_{\nu}(\al(q)) = \VD_f(\nu_q) = \VD_f(\mu_q) =\VD_f(J_\nu(\alpha(q))).
\end{align*}
For any other measure $\rho \in \mathcal{M}_{J_{\nu}(\alpha(q))}^+(f)$, since $\rho$ is ergodic, by \eqref{alphamu} we have $\alpha(\rho) = \alpha(q)$. Thus, by the variational principal, 
\begin{align*}
0 &= \cP(q,T(q)) \geq h_{\rho}(f) + q\int g d\rho -T(q)\int \log|\Jac f|d\rho\\
&\implies 0\geq \VD_f(\rho)+ q(-\alpha(q))-T(q)\\
&\implies \VD_f(\nu) \geq \VD_f(\rho).
\end{align*}
This completes the proof.
\end{proof}

\section{Properties of the multifractal decomposition}\label{sec_multi_decomp}
Recall that we have the following multifractal decomposition of the set $X$, i.e.,
$X = J'_\nu \cup \left(\bigcup_{q \in \mathbb{R}} J_\nu(\alpha(q)) \right).$
\begin{lemma}
For every $q \in \mathbb{R}$, the set $J_\nu(\alpha(q))$ is dense in $X$.
\end{lemma}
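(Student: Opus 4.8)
The plan is to deduce density from two facts already established: that $J_\nu(\alpha(q))$ has full measure for $\nu_q$, and that the measure $\nu_q$ (equivalently its symbolic lift) has full support. I would carry out the argument on the symbolic space $\Sigma_A^+$ and then transport the conclusion through the projection $\pi$, since this avoids having to reason about the geometry of the $U$-neighbourhoods directly.

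First I would recall from Lemma \ref{lem_3_part1} that $J_\nu(\alpha(q)) = \pi(\hat{X}_q)$, and from the proof of Lemma \ref{lem_2} that the equilibrium state $\widetilde{\nu}_q$ of $\widetilde{\phi}_q = \phi_q \circ \pi$ satisfies $\widetilde{\nu}_q(\hat{X}_q) = 1$ (this is exactly the Birkhoff-a.e. statement used there). The next step is to observe that $\widetilde{\nu}_q$ has full support: being the Gibbs measure of a H\"older potential on the subshift of finite type, it assigns strictly positive mass to every cylinder $[\omega_0 \cdots \omega_{n-1}]$ by the Gibbs property \eqref{eq_gibbs_nu}. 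Consequently $\hat{X}_q$ is dense in $\Sigma_A^+$: if some cylinder $[\omega_0 \cdots \omega_{n-1}]$ were disjoint from $\hat{X}_q$, then $\hat{X}_q$ would lie in its complement and we would get $\widetilde{\nu}_q(\hat{X}_q) \le 1 - \widetilde{\nu}_q([\omega_0 \cdots \omega_{n-1}]) < 1$, contradicting $\widetilde{\nu}_q(\hat{X}_q) = 1$. Since the cylinders form a basis for the topology of $\Sigma_A^+$, density follows.

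Finally I would push density forward through $\pi$. Because $\pi \colon \Sigma_A^+ \to X$ is continuous and surjective and $\Sigma_A^+$ is compact, continuity gives $\pi(\overline{\hat{X}_q}) \subseteq \overline{\pi(\hat{X}_q)}$, whence $X = \pi(\Sigma_A^+) = \pi(\overline{\hat{X}_q}) \subseteq \overline{\pi(\hat{X}_q)} \subseteq X$. Therefore $\overline{\pi(\hat{X}_q)} = X$, i.e. $J_\nu(\alpha(q)) = \pi(\hat{X}_q)$ is dense in $X$.

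The main obstacle is establishing full support of $\widetilde{\nu}_q$, which rests on the transitivity (indeed mixing) of $\Sigma_A^+$ that already underlies the uniqueness of the equilibrium states used throughout the paper; granting that, the Gibbs property \eqref{eq_gibbs_nu} makes positivity of all cylinder measures immediate, and the rest is soft topology. An equivalent route, should one prefer to stay on $X$, would be to show directly that $\nu_q$ is fully supported by noting that every nonempty open $V \subseteq X$ contains some neighbourhood $U(x,\epsilon,\kappa,m)$ with $m$ large (these shrink to $x$ as $m \to \infty$ by uniform expansion) of positive $\nu_q$-measure via \eqref{eq_gibbs_nu}, and then concluding that the full-measure set $J_\nu(\alpha(q))$ must meet every such $V$; I expect this variant to require the same transitivity input but slightly more care with the geometry of the $U$-sets.
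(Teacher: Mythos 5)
Your proof is correct, but it takes a genuinely different route from the paper's. The paper argues combinatorially: given an open set $\cO \subseteq X$, it finds an element $S$ of a refined Markov partition inside $\cO$, takes any $\omega \in \hat{X}_q$, and uses exactness of $f|_X$ (hence transitivity, hence irreducibility of $A$) to produce a connecting word $u$ so that $w(S)u\omega$ is admissible; the key observation is that the Birkhoff-ratio limit $\alpha(\cdot)$ defining $\hat{X}_q$ is unchanged by prepending a finite word, so $w(S)u\omega \in \hat{X}_q$ and projects into $S \cap \pi(\hat{X}_q) \subseteq \cO$. You instead run a soft measure-theoretic argument: $\widetilde{\nu}_q(\hat{X}_q)=1$ (Lemma \ref{lem_2}) plus full support of the Gibbs measure $\widetilde{\nu}_q$ forces $\hat{X}_q$ to meet every cylinder, and density then passes through the continuous surjection $\pi$ via $\pi(\overline{\hat{X}_q}) \subseteq \overline{\pi(\hat{X}_q)}$. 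Both arguments ultimately rest on the same transitivity input, which the paper extracts from exactness; yours imports it through the standard fact that equilibrium states of H\"older potentials on irreducible subshifts of finite type are fully supported, while the paper's is more self-contained, using no measure at all and only the prefix-insensitivity of $\alpha(\omega)$. Your version is shorter and proves something more general (any full-measure set for a fully supported measure is dense), which could be reused for other level sets; the paper's version makes the mechanism explicit and avoids any appeal to Gibbs theory. One cosmetic caveat: you cite \eqref{eq_gibbs_nu} for positivity of cylinder measures, but that inequality is formulated for the sets $U(\pi(\omega),\epsilon,\kappa,m)$ rather than for cylinders; the fact you need is the standard symbolic Gibbs estimate on cylinders (from the reference the paper cites for \eqref{eq_gibbs_nu}), so you should either invoke that directly or deduce cylinder positivity from the $U$-set bounds.
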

\begin{proof}
By Lemma \ref{lem_3_part1}, it suffices to show that $\pi(\hat{X}_q)$ is dense in $X$. For any $\om\in \Sigma^{+}_A$, set 
$$
    \alpha(\om) : = \lim_{n\to\infty}\frac{\sum_{k=0}^{n-1} g(\pi(\sigma^k(\omega)))}{\sum_{k=0}^{n-1}  \log|\Jac f(\pi(\sigma^k(\omega)))|^{-1}}.
$$
Now, let $\cO$ be an open set in $X$. Since the Markov partition weakly partitions $X$, there exists an $n$ for which there is an $S\in \bigvee_{i=0}^{n-1}\cR$ with $S\subseteq \cO$. Let $w(S)$ be the finite admissible word encoding $S$. 

Next let $\om\in \hat{X}_q$. Since $f \colon X\to X$ is exact, it is transitive, and thus the associated adjacency matrix $A$ for $\Sigma_A^{+}$ defining the admissibility condition is irreducible, so there exists a finite word $u$ such that $w(S)_{|w(S)|}u\om_0$ is admissible. Then,
$$
    \alpha(\om) = \alpha(w(S)u\om) = \alpha(q).
$$
Hence $\pi(w(S)u\om)\in S\cap\pi(\hat{X}_q)\subseteq \cO$. As $\cO$ was arbitrary, $\pi(\hat{X}_q)$ is dense in $X$. 
\end{proof}

\begin{lemma}
The set $J_\nu'$ is non-empty. 
\end{lemma}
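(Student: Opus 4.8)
The plan is to exhibit a single point $x \in X$ at which the local volume dimension does not exist, by constructing a symbolic orbit whose defining Birkhoff ratio oscillates. Recall from the proof of Lemma~\ref{lem_3_part1} that for $\omega \in \Sigma_A^+$ with $\pi(\omega) = x$, the inequalities \eqref{eq_4.10_1} and \eqref{eq_4.10_2}, together with Lemma~\ref{lem_4.3}, sandwich the lower and upper local volume dimensions between the $\liminf$ and $\limsup$ of the ratio
$$
R_n(\omega) := \frac{\sum_{k=0}^{n-1} g(\pi(\sigma^k(\omega)))}{\sum_{k=0}^{n-1}\log|\Jac f(\pi(\sigma^k(\omega)))|^{-1}}.
$$
Running those same estimates with $\liminf$ and $\limsup$ in place of the honest limit shows $\underline{\delta}_{\nu,x} = \liminf_n R_n(\omega)$ and $\overline{\delta}_{\nu,x} = \limsup_n R_n(\omega)$. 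Hence it suffices to build $\omega$ with $\liminf_n R_n(\omega) < \limsup_n R_n(\omega)$, for then $\underline{\delta}_{\nu,\pi(\omega)} < \overline{\delta}_{\nu,\pi(\omega)}$ and $\pi(\omega) \in J_\nu'$. I will assume $\nu \neq \nu_0$, so that $\alpha_1 < \alpha_2$ by Corollary~\ref{cor_alpha_q} and Remark~\ref{rmk_range_alpha(q)}; this is the non-degenerate case of interest. By Lemma~\ref{lem_alpha12} the ratio $\rho \mapsto \tfrac{-\int g\, d\rho}{\int \log|\Jac f|\, d\rho}$ is non-constant over invariant measures, so by density of periodic measures I may fix two periodic orbits $p_1, p_2$ along which the ratio takes distinct constant values $\beta_1 \neq \beta_2$, say $\beta_1 < \beta_2$.

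Next I would assemble $\omega$ by concatenation. Let $w_1, w_2$ be the admissible words coding the orbits of $p_1,p_2$. Since $f\colon X \to X$ is transitive (indeed exact, as invoked in the preceding density lemma), the adjacency matrix $A$ is irreducible, so there are fixed connecting words of uniformly bounded length allowing formation of an admissible infinite word
$$
\omega = w_1^{a_1}\, u_1\, w_2^{b_1}\, u_2\, w_1^{a_2}\, u_3\, w_2^{b_2}\, \cdots.
$$
The exponents $a_j, b_j$ are chosen recursively, growing so rapidly that each new block dwarfs the entire prefix preceding it: concretely, $a_{j+1}$ is taken large enough that at the index $n$ ending the $(j{+}1)$-st $w_1$-block, both the numerator and the denominator of $R_n(\omega)$ are dominated by the contribution of that final $w_1$-block, and symmetrically for the $w_2$-blocks.

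The verification is then a dominant-block estimate. Writing the Birkhoff sums of $g$ and of $\log|\Jac f|^{-1}$ additively over blocks, each connecting word $u_j$ contributes $O(1)$ and each periodic block contributes exactly its length times the corresponding per-period average. Because $\log|\Jac f|$ is bounded below by a positive constant on the compact uniformly expanding set $X$, the magnitude of the denominator grows linearly in the word length, so making $a_{j+1}$ (resp.\ $b_j$) large relative to the accumulated prefix length forces $R_n(\omega) \to \beta_1$ along indices ending $w_1$-blocks and $R_n(\omega)\to\beta_2$ along indices ending $w_2$-blocks. Thus $\liminf_n R_n(\omega) \le \beta_1 < \beta_2 \le \limsup_n R_n(\omega)$, the limit fails to exist, and the squeeze above yields $\underline{\delta}_{\nu,\pi(\omega)} \neq \overline{\delta}_{\nu,\pi(\omega)}$, placing $\pi(\omega)$ in $J_\nu'$. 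The hard part will be exactly this bookkeeping: one must quantify ``large enough'' so that the $O(1)$ contributions of the $u_j$ and, more delicately, the full history of all earlier blocks become simultaneously negligible in \emph{both} the numerator and the denominator; controlling the denominator is precisely where the uniform positive lower bound on $\log|\Jac f|$ from uniform expansion is used.
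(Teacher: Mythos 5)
Your proposal is correct in outline and shares the paper's core mechanism---building a point of $\Sigma_A^+$ by concatenating ever-longer admissible blocks so that an asymptotic average oscillates---but you oscillate a different quantity, and the difference is substantive. The paper takes typical points $y_1,y_2$ of two equilibrium states with $\int_X g\,d\rho_1\neq\int_X g\,d\rho_2$ and reduces, via the Gibbs property, to making the Birkhoff averages $\frac1N S_Ng$ diverge; you instead force oscillation of the ratio $R_n$ of the two Birkhoff sums, using periodic orbits with distinct ratio values (available by Lemma \ref{lem_alpha12}, Corollary \ref{cor_alpha_q} and density of periodic measures). Your choice is the more robust one: the local volume dimension is a ratio whose denominator $\log\sqrt{{\rm Vol}(U(x,\epsilon,\kappa,N))}$ fluctuates with the orbit as well, so divergence of $\frac1N S_Ng$ alone does not preclude convergence of the dimension ratio (if $g$ is cohomologous to a constant multiple of $-\log|\Jac f|$, the averages can oscillate while $R_n$ converges everywhere). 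The paper's reduction thus quietly needs the two measures to have distinct \emph{ratios}, which is exactly what you arrange. Relatedly, your restriction to $\nu\neq\nu_0$ is not a loss of generality but a necessity: when $\nu=\nu_0$, $g$ is cohomologous to $-\VD_f(X)\log|\Jac f|$, so $R_n\to\VD_f(X)$ at every point, the local volume dimension exists everywhere, and $J_\nu'=\emptyset$; your proof makes explicit a hypothesis that the statement and the paper's proof leave implicit.

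Two steps need tightening. First, the claimed equalities $\underline{\delta}_{\nu,x}=\liminf_n R_n(\omega)$ and $\overline{\delta}_{\nu,x}=\limsup_n R_n(\omega)$ are stronger than what the sandwich of Lemmas \ref{lem_4.3} and \ref{lem_3_part1} yields: those estimates compare the dimension quotient at stage $N$ with $R_n$ along the subsequence $n=m(\omega,\epsilon,\kappa,N)$, and a priori $\liminf_N R_{m(N)}\ge\liminf_n R_n$ while $\limsup_N R_{m(N)}\le\limsup_n R_n$, which is the wrong direction for concluding $\underline{\delta}_{\nu,x}<\overline{\delta}_{\nu,x}$ from oscillation of the full sequence. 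This is harmless for your construction, but the argument must be phrased as oscillation along $\{m(N)\}_N$: since each increment of $N$ changes $\log r$ by a bounded amount and Birkhoff sums of $\log|\Jac f|$ grow at least linearly, the sequence $\{m(N)\}$ has bounded gaps, hence enters the long terminal stretches of each block where $R_n$ is within $o(1)$ of $\beta_1$, respectively $\beta_2$. Second, uniform expansion does not give a pointwise positive lower bound for $\log|\Jac f|$ on $X$ (the constant $C$ may be less than $1$, so a single iterate can contract); what is true, and what both your dominant-block estimate and the bounded-gaps claim actually require, is the linear lower bound $S_n\log|\Jac f|\ge 2k\log C+2kn\log\eta$ coming from applying the expansion hypothesis to $f^n$.
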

\begin{proof}
We show that there exists $x \in X$ such that $\displaystyle\lim_{\ep \to 0}\lim_{\kappa \to 0}\lim_{N \to \infty} \frac{\log \nu(U(x,\ep,\kappa,N))}{\log {\rm Vol}(U(x,\ep,\kappa,N))^{1/2}}$ does not exist. By Gibbs property \eqref{eq_gibbs_nu}, $\log C_2 + S_Ng(x) \le \log \nu(U(x,\ep,\kappa,N)) \le \log C_1 + S_Ng(x)$. It suffices to show that there exists $x \in X$ such that the limit of Birkhoff averages $\lim_{N \to \infty}\frac{1}{N}S_Ng(x)$ does not exists.

Let $\rho_1,\rho_2$ be two equilibrium states such that $\int_X g d\rho_1 \neq \int_X g d\rho_2$. By Birkhoff’s ergodic theorem, there exist points $y_i$ typical for $\rho_i$ such that 
$\frac{1}{N}S_Ng(y_i) \to \int_X g d\rho_i$. In the mixing subshift of finite type $\Sigma_A^+$, we construct an infinite word $\omega$ by concatenating alternatingly long blocks cut from the orbits of $y_1$ and $y_2$, in such a way that the lengths of the blocks satisfy $L_{1} \ll M_{1} \ll L_2 \ll M_2 \cdots$. Here $L_i$ (resp. $M_i$) is the length of the $i$th block of orbits of $y_1$ (resp. $y_2$).
 
Then the partial sums $\frac{1}{m} S_m g(\pi(\omega))$ will alternating between $\int_X gd\rho_1$ and $\int_X gd\rho_2$. Therefore the limit $\displaystyle \lim_{m \to \infty}\frac{1}{m} S_m g(\pi(\omega))$ does not exist. Thus $\pi(\omega) \in J_\nu'$.
\end{proof}

For conformal dynamical systems, the Hausdorff dimension and entropy of irregular sets have been studied by Barreira-Schmeling \cite{BarSch00}; see also \cite[Chapter 8]{Barreirabook08}.

\bibliographystyle{plain}
\bibliography{VDM.bib}

@book {Barreirabook11,
    AUTHOR = {Barreira, Luis},
     TITLE = {Thermodynamic formalism and applications to dimension theory},
    SERIES = {Progr. Math.},
    VOLUME = {294},
     NOTE = {},
 PUBLISHER = {Birkhäuser/Springer Basel AG, Basel},
      YEAR = {2011},
     PAGES = {xii+297},
}

@book {Barreirabook08,
    AUTHOR = {Barreira, Luis},
     TITLE = {Dimension and recurrence in hyperbolic dynamics},
    SERIES = {Progr. Math.},
    VOLUME = {272},
 PUBLISHER = {Birkh\"auser Verlag, Basel},
      YEAR = {2008},
     PAGES = {xiv+300},
      ISBN = {978-3-7643-8881-2},
   MRCLASS = {37C45 (28A78 28A80 28D05 37A05 37B20 37C40 37D05)},
  MRNUMBER = {2434246},
MRREVIEWER = {K\'aroly\ Simon},
       DOI = {10.1007/978-3-7643-8882-9},
       URL = {https://doi-org.ezproxy.lib.ou.edu/10.1007/978-3-7643-8882-9},
}

@article {BarSch00,
    AUTHOR = {Barreira, Luis and Schmeling, J\"org},
     TITLE = {Sets of ``non-typical'' points have full topological entropy
              and full {H}ausdorff dimension},
   JOURNAL = {Israel J. Math.},
  FJOURNAL = {Israel Journal of Mathematics},
    VOLUME = {116},
      YEAR = {2000},
     PAGES = {29--70},
      ISSN = {0021-2172,1565-8511},
   MRCLASS = {37C45 (37A25 37D35 37F99)},
  MRNUMBER = {1759398},
MRREVIEWER = {Benoit\ Saussol},
       DOI = {10.1007/BF02773211},
       URL = {https://doi-org.ezproxy.lib.ou.edu/10.1007/BF02773211},
}

@article {BS01,
    AUTHOR = {Barreira, Luis and Saussol, Beno\^it },
     TITLE = {Variational principles and mixed multifractal spectra},
   JOURNAL = {Trans. Amer. Math. Soc.},
  FJOURNAL = {Transactions of the American Mathematical Society},
    VOLUME = {353},
      YEAR = {2001},
    NUMBER = {10},
     PAGES = {3919--3944},
      ISSN = {0002-9947,1088-6850},
   MRCLASS = {37D35 (37A35 37C45)},
  MRNUMBER = {1837214},
MRREVIEWER = {Boris\ Hasselblatt},
       DOI = {10.1090/S0002-9947-01-02844-6},
       URL = {https://doi-org.ezproxy.lib.ou.edu/10.1090/S0002-9947-01-02844-6},
}

@article {BD,
	AUTHOR = {Berteloot, Fran\c{c}ois and Dupont, Christophe},
	TITLE = {A distortion theorem for iterated inverse branches of
	holomorphic endomorphisms of {$\mathbb P^k$}},
	JOURNAL = {J. Lond. Math. Soc. (2)},
	FJOURNAL = {Journal of the London Mathematical Society. Second Series},
	VOLUME = {99},
	YEAR = {2019},
	NUMBER = {1},
	PAGES = {153--172},}

@article {BDM,
    AUTHOR = {Berteloot, Fran\c{c}ois and Dupont, Christophe and Molino, Laura},
     TITLE = {Normalization of bundle holomorphic contractions and
              applications to dynamics},
   JOURNAL = {Ann. Inst. Fourier (Grenoble)},
  FJOURNAL = {Universit\'{e} de Grenoble. Annales de l'Institut Fourier},
    VOLUME = {58},
      YEAR = {2008},
    NUMBER = {6},
     PAGES = {2137--2168},}

@article {Besi35,
    AUTHOR = {Besicovitch, Abram Samoilovitch},
     TITLE = {On the sum of digits of real numbers represented in the dyadic
              system.},
   JOURNAL = {Math. Ann.},
  FJOURNAL = {Mathematische Annalen},
    VOLUME = {110},
      YEAR = {1935},
    NUMBER = {1},
     PAGES = {321--330},
      ISSN = {0025-5831,1432-1807},
   MRCLASS = {99-04},
  MRNUMBER = {1512941},
       DOI = {10.1007/BF01448030},
       URL = {https://doi.org/10.1007/BF01448030},
}

@article {BSS02_nt,
    AUTHOR = {Barreira, Luis and Saussol, Beno\^it and Schmeling, J\"org},
     TITLE = {Distribution of frequencies of digits via multifractal
              analysis},
   JOURNAL = {J. Number Theory},
  FJOURNAL = {Journal of Number Theory},
    VOLUME = {97},
      YEAR = {2002},
    NUMBER = {2},
     PAGES = {410--438},
      ISSN = {0022-314X,1096-1658},
   MRCLASS = {11K55},
  MRNUMBER = {1942968},
MRREVIEWER = {Karma\ Dajani},
       DOI = {10.1016/S0022-314X(02)00003-3},
       URL = {https://doi.org/10.1016/S0022-314X(02)00003-3},
}

@article {BSS02_app,
    AUTHOR = {Barreira, Luis and Saussol, Beno\^it and Schmeling, J\"org},
     TITLE = {Higher-dimensional multifractal analysis},
   JOURNAL = {J. Math. Pures Appl. (9)},
  FJOURNAL = {Journal de Math\'ematiques Pures et Appliqu\'ees. Neuvi\`eme
              S\'erie},
    VOLUME = {81},
      YEAR = {2002},
    NUMBER = {1},
     PAGES = {67--91},
      ISSN = {0021-7824},
   MRCLASS = {37D35 (11K55 37A25 37B40 37C45 37D20)},
  MRNUMBER = {1994883},
MRREVIEWER = {Alejandro\ Mario\ Mes\'on},
       DOI = {10.1016/S0021-7824(01)01228-4},
       URL = {https://doi.org/10.1016/S0021-7824(01)01228-4},
}

@article{BHMane,
  title = {A {Ma$\mathrm{\tilde{n}}$é-Manning} Formula for Expanding Measures for Endomorphisms of {{$\mathbb{P}^k$}}},
  author = {Bianchi, Fabrizio and He, Yan Mary},
  year = {2024},
  journal = {Trans. Amer. Math. Soc.},
  volume = {377},
  number = {11},
  pages = {8179--8219},
  issn = {0002-9947, 1088-6850},
  doi = {10.1090/tran/9252},
  urldate = {2025-08-05},
  langid = {english}
}

@article{CPZ19,
	title = {Dimension {Estimates} for {Non}-conformal {Repellers} and {Continuity} of {Sub}-additive {Topological} {Pressure}},
	volume = {29},
	issn = {1420-8970},
	url = {https://doi.org/10.1007/s00039-019-00510-7},
	doi = {10.1007/s00039-019-00510-7},
	abstract = {Given a non-conformal repeller \$\${\textbackslash}Lambda \$\$of a \$\$C{\textasciicircum}\{1+{\textbackslash}gamma \}\$\$map, we study the Hausdorff dimension of the repeller and continuity of the sub-additive topological pressure for the sub-additive singular valued potentials. Such a potential always possesses an equilibrium state. We then use a substantially modified version of Katok’s approximating argument, to construct a compact invariant set on which the corresponding dynamical quantities (such as Lyapunov exponents and metric entropy) are close to that of the equilibrium measure. This allows us to establish continuity of the sub-additive topological pressure and obtain a sharp lower bound of the Hausdorff dimension of the repeller. The latter is given by the zero of the super-additive topological pressure.},
	language = {en},
	number = {5},
	urldate = {2025-11-18},
	journal = {Geom. Funct. Anal.},
	author = {Cao, Yongluo and Pesin, Yakov and Zhao, Yun},
	month = oct,
	year = {2019},
	keywords = {37C45, 37D35, 37H15, Dimension, Expanding map, Non-uniform hyperbolicity theory, Repeller, Topological pressure},
	pages = {1325--1368},
	file = {Full Text PDF:/Users/nathandalaklis/Zotero/storage/4EBRXP43/Cao et al. - 2019 - Dimension Estimates for Non-conformal Repellers and Continuity of Sub-additive Topological Pressure.pdf:application/pdf},
}

@article {Clim14,
    AUTHOR = {Climenhaga, Vaughn},
     TITLE = {The thermodynamic approach to multifractal analysis},
   JOURNAL = {Ergodic Theory Dynam. Systems},
  FJOURNAL = {Ergodic Theory and Dynamical Systems},
    VOLUME = {34},
      YEAR = {2014},
    NUMBER = {5},
     PAGES = {1409--1450},
      ISSN = {0143-3857,1469-4417},
   MRCLASS = {37D35 (37A35 37C45)},
  MRNUMBER = {3255427},
MRREVIEWER = {Tomas\ Persson},
       DOI = {10.1017/etds.2014.12},
       URL = {https://doi.org/10.1017/etds.2014.12},
}

@article{Fal2021,
	title = {\textit{{L}}$^{\textrm{ \textit{q}}}$-spectra of measures on planar non-conformal attractors},
	volume = {41},
	issn = {0143-3857, 1469-4417},
	url = {https://www.cambridge.org/core/journals/ergodic-theory-and-dynamical-systems/article/lqspectra-of-measures-on-planar-nonconformal-attractors/253610DFDFF778D14385EF347A1C5ECE},
	doi = {10.1017/etds.2020.106},
	abstract = {We study the -spectrum of measures in the plane generated by certain nonlinear maps. In particular, we consider attractors of iterated function systems consisting of maps whose components are  and for which the Jacobian is a lower triangular matrix at every point subject to a natural domination condition on the entries. We calculate the -spectrum of Bernoulli measures supported on such sets by using an appropriately defined analogue of the singular value function and an appropriate pressure function.},
	language = {en},
	number = {11},
	urldate = {2025-12-01},
	journal = {Ergodic Theory Dynam. Systems},
	author = {Falconer, Kenneth J. and Fraser, Jonathan M. and Lee, Lawrence D.},
	month = nov,
	year = {2021},
	keywords = {15A18, 28A80, 37C45, generalized q-dimensions, Lq-spectrum, modified singular value function, non-conformal attractor, self-affine measure},
	pages = {3288--3306},
	file = {Full Text PDF:/Users/nathandalaklis/Zotero/storage/2L9TUHQ2/Falconer et al. - 2021 - -spectra of measures on planar non-conformal attractors.pdf:application/pdf},
}

@article {Hof10,
    AUTHOR = {Hofbauer, Franz},
     TITLE = {Multifractal spectra of {B}irkhoff averages for a piecewise
              monotone interval map},
   JOURNAL = {Fund. Math.},
  FJOURNAL = {Fundamenta Mathematicae},
    VOLUME = {208},
      YEAR = {2010},
    NUMBER = {2},
     PAGES = {95--121},
      ISSN = {0016-2736,1730-6329},
   MRCLASS = {37D35 (28A78 37B40 37C45 37E05)},
  MRNUMBER = {2640068},
MRREVIEWER = {Lars\ Olsen},
       DOI = {10.4064/fm208-2-1},
       URL = {https://doi.org/10.4064/fm208-2-1},
}

@article {King95,
    AUTHOR = {King, James F.},
     TITLE = {The singularity spectrum for general {S}ierpi\'nski carpets},
   JOURNAL = {Adv. Math.},
  FJOURNAL = {Advances in Mathematics},
    VOLUME = {116},
      YEAR = {1995},
    NUMBER = {1},
     PAGES = {1--11},
      ISSN = {0001-8708,1090-2082},
   MRCLASS = {28A80 (58F11)},
  MRNUMBER = {1361476},
MRREVIEWER = {Tim\ Bedford},
       DOI = {10.1006/aima.1995.1061},
       URL = {https://doi-org.ezproxy.lib.ou.edu/10.1006/aima.1995.1061},
}

@article {JordanRams11,
    AUTHOR = {Jordan, Thomas and Rams, Michal},
     TITLE = {Multifractal analysis for {B}edford-{M}c{M}ullen carpets},
   JOURNAL = {Math. Proc. Cambridge Philos. Soc.},
  FJOURNAL = {Mathematical Proceedings of the Cambridge Philosophical
              Society},
    VOLUME = {150},
      YEAR = {2011},
    NUMBER = {1},
     PAGES = {147--156},
      ISSN = {0305-0041,1469-8064},
   MRCLASS = {28A80 (37C45)},
  MRNUMBER = {2739077},
MRREVIEWER = {Jun\ Jie\ Miao},
       DOI = {10.1017/S0305004110000472},
       URL = {https://doi-org.ezproxy.lib.ou.edu/10.1017/S0305004110000472},
}

@article {RU09,
    AUTHOR = {Roy, Mario and Urba\'nski, Mariusz},
     TITLE = {Multifractal analysis for conformal graph directed {M}arkov
              systems},
   JOURNAL = {Discrete Contin. Dyn. Syst.},
  FJOURNAL = {Discrete and Continuous Dynamical Systems},
    VOLUME = {25},
      YEAR = {2009},
    NUMBER = {2},
     PAGES = {627--650},
      ISSN = {1078-0947,1553-5231},
   MRCLASS = {37C45 (28A80 37D35)},
  MRNUMBER = {2525196},
MRREVIEWER = {Imen\ Bhouri},
       DOI = {10.3934/dcds.2009.25.627},
       URL = {https://doi.org/10.3934/dcds.2009.25.627},
}

@book {PU,
	AUTHOR = {Przytycki, Feliks and Urba\'{n}ski, Mariusz},
	TITLE = {Conformal fractals: ergodic theory methods},
	SERIES = {London Mathematical Society Lecture Note Series},
	VOLUME = {371},
	PUBLISHER = {Cambridge University Press, Cambridge},
	YEAR = {2010},
	PAGES = {x+354},
	ISBN = {978-0-521-43800-1},
	MRCLASS = {37-02 (28Dxx 37A05 37C45 37D35 37F10 37F35)},
	MRNUMBER = {2656475},
	MRREVIEWER = {Katrin Gelfert},
	DOI = {10.1017/CBO9781139193184},
	URL = {https://doi-org.ezproxy.lib.ou.edu/10.1017/CBO9781139193184},
}

@book {Pesin97book,
    AUTHOR = {Pesin, Yakov},
     TITLE = {Dimension theory in dynamical systems},
    SERIES = {Chicago Lectures in Mathematics},
      NOTE = {Contemporary views and applications},
 PUBLISHER = {University of Chicago Press, Chicago, IL},
      YEAR = {1997},
     PAGES = {xii+304},
      ISBN = {0-226-66221-7; 0-226-66222-5},
   MRCLASS = {58-02 (28A78 54F45 54H20 58F11 58F15 58F99)},
  MRNUMBER = {1489237},
MRREVIEWER = {Irene\ Hueter},
       DOI = {10.7208/chicago/9780226662237.001.0001},
       URL = {https://doi.org/10.7208/chicago/9780226662237.001.0001},
}

@article {PesinWeiss97,
    AUTHOR = {Pesin, Yakov and Weiss, Howard},
     TITLE = {A multifractal analysis of equilibrium measures for conformal
              expanding maps and {M}oran-like geometric constructions},
   JOURNAL = {J. Statist. Phys.},
  FJOURNAL = {Journal of Statistical Physics},
    VOLUME = {86},
      YEAR = {1997},
    NUMBER = {1-2},
     PAGES = {233--275},
      ISSN = {0022-4715,1572-9613},
   MRCLASS = {58F11 (28A78 58F15)},
  MRNUMBER = {1435198},
MRREVIEWER = {Lars\ Olsen},
       DOI = {10.1007/BF02180206},
       URL = {https://doi-org.ezproxy.lib.ou.edu/10.1007/BF02180206},
}

@article {FF00,
    AUTHOR = {Fan, Ai-Hua and Feng, De-Jun},
     TITLE = {On the distribution of long-term time averages on symbolic
              space},
   JOURNAL = {J. Statist. Phys.},
  FJOURNAL = {Journal of Statistical Physics},
    VOLUME = {99},
      YEAR = {2000},
    NUMBER = {3-4},
     PAGES = {813--856},
      ISSN = {0022-4715,1572-9613},
   MRCLASS = {82B05 (37A60)},
  MRNUMBER = {1766907},
       DOI = {10.1023/A:1018643512559},
       URL = {https://doi.org/10.1023/A:1018643512559},
}

@article {OW07,
    AUTHOR = {Olsen, Lars and Winter, Steffen},
     TITLE = {Multifractal analysis of divergence points of deformed measure
              theoretical {B}irkhoff averages. {II}. {N}on-linearity,
              divergence points and {B}anach space valued spectra},
   JOURNAL = {Bull. Sci. Math.},
  FJOURNAL = {Bulletin des Sciences Math\'ematiques},
    VOLUME = {131},
      YEAR = {2007},
    NUMBER = {6},
     PAGES = {518--558},
      ISSN = {0007-4497,1952-4773},
   MRCLASS = {28A80 (37C45)},
  MRNUMBER = {2351308},
MRREVIEWER = {Manuel\ Mor\'an},
       DOI = {10.1016/j.bulsci.2006.05.005},
       URL = {https://doi.org/10.1016/j.bulsci.2006.05.005},
}

@article{Q2024,
	title = {\textit{{L}}$^{\textrm{ \textit{q}}}$ -spectra of graph-directed planar non-conformal measures},
	volume = {37},
	issn = {0951-7715, 1361-6544},
	url = {https://iopscience.iop.org/article/10.1088/1361-6544/ad6055},
	doi = {10.1088/1361-6544/ad6055},
	abstract = {We estimate the upper and lower Lq-spectra of measures generated by a class of graph-directed planar non-conformal iterated function systems. These systems are composed of C1+ contractions with lower triangular pointwise Jacobian. The estimation provides a criterion for the existence of the Lq-spectra in certain case. As an application, we derive the box-counting dimension of planar self-aﬃne sets generated by lower triangular matrices, whose geometric construction satisﬁes a ﬁnite overlapping type condition.},
	language = {en},
	number = {9},
	urldate = {2025-12-01},
	journal = {Nonlinearity},
	author = {Qiu, Hua and Wang, Qi and Wang, Shufang},
	month = sep,
	year = {2024},
	pages = {095003},
	file = {PDF:/Users/nathandalaklis/Zotero/storage/62AHKKCY/Qiu et al. - 2024 - L q  -spectra of graph-directed planar non-conformal measures.pdf:application/pdf},
}

@article {Sch99,
    AUTHOR = {Schmeling, J\"org},
     TITLE = {On the completeness of multifractal spectra},
   JOURNAL = {Ergodic Theory Dynam. Systems},
  FJOURNAL = {Ergodic Theory and Dynamical Systems},
    VOLUME = {19},
      YEAR = {1999},
    NUMBER = {6},
     PAGES = {1595--1616},
      ISSN = {0143-3857,1469-4417},
   MRCLASS = {37A99 (28A80 37B40 37C45)},
  MRNUMBER = {1738952},
MRREVIEWER = {Irene\ Hueter},
       DOI = {10.1017/S0143385799151988},
       URL = {https://doi-org.ezproxy.lib.ou.edu/10.1017/S0143385799151988},
}

@article {Tem01,
    AUTHOR = {Tempelman, Arkady},
     TITLE = {Multifractal analysis of ergodic averages: a generalization of
              {E}ggleston's theorem},
   JOURNAL = {J. Dynam. Control Systems},
  FJOURNAL = {Journal of Dynamical and Control Systems},
    VOLUME = {7},
      YEAR = {2001},
    NUMBER = {4},
     PAGES = {535--551},
      ISSN = {1079-2724,1573-8698},
   MRCLASS = {37A30 (28A80 28D05 37A45 37C45)},
  MRNUMBER = {1854035},
MRREVIEWER = {Tomoki\ Inoue},
       DOI = {10.1023/A:1013158601371},
       URL = {https://doi.org/10.1023/A:1013158601371},
}

@article {TV03,
    AUTHOR = {Takens, Floris and Verbitskiy, Evgeny},
     TITLE = {On the variational principle for the topological entropy of
              certain non-compact sets},
   JOURNAL = {Ergodic Theory Dynam. Systems},
  FJOURNAL = {Ergodic Theory and Dynamical Systems},
    VOLUME = {23},
      YEAR = {2003},
    NUMBER = {1},
     PAGES = {317--348},
      ISSN = {0143-3857,1469-4417},
   MRCLASS = {37C45 (37B40 37D99)},
  MRNUMBER = {1971209},
MRREVIEWER = {Pei\ Dong\ Liu},
       DOI = {10.1017/S0143385702000913},
       URL = {https://doi.org/10.1017/S0143385702000913},
}

@book{Urbanski21,
	title = {Volume 1 {Ergodic} {Theory} – {Finite} and {Infinite}, {Thermodynamic} {Formalism}, {Symbolic} {Dynamics} and {Distance} {Expanding} {Maps}},
	isbn = {978-3-11-070268-2},
	url = {https://www.degruyterbrill.com/document/doi/10.1515/9783110702682/html},
	abstract = {The book contains a detailed treatment of thermodynamic formalism on general compact metrizable spaces. Topological pressure, topological entropy, variational principle, and equilibrium states are presented in detail. Abstract ergodic theory is also given a significant attention. Ergodic theorems, ergodicity, and Kolmogorov-Sinai metric entropy are fully explored. Furthermore, the book gives the reader an opportunity to find rigorous presentation of thermodynamic formalism for distance expanding maps and, in particular, subshifts of finite type over a finite alphabet. It also provides a fairly complete treatment of subshifts of finite type over a countable alphabet. Transfer operators, Gibbs states and equilibrium states are, in this context, introduced and dealt with. Their relations are explored. All of this is applied to fractal geometry centered around various versions of Bowen’s formula in the context of expanding conformal repellors, limit sets of conformal iterated function systems and conformal graph directed Markov systems. A unique introduction to iteration of rational functions is given with emphasize on various phenomena caused by rationally indifferent periodic points. Also, a fairly full account of the classicaltheory of Shub’s expanding endomorphisms is given; it does not have a book presentation in English language mathematical literature.},
	language = {en},
	urldate = {2025-08-15},
	publisher = {De Gruyter},
	author = {Urbański, Mariusz and Roy, Mario and Munday, Sara},
	month = nov,
	year = {2021},
	doi = {10.1515/9783110702682},
	keywords = {Dynamisches System, Ergodentheorie, Fraktal},
	file = {Full Text PDF:/Users/nathandalaklis/Zotero/storage/M5GC5NSX/Urbański et al. - 2021 - Volume 1 Ergodic Theory – Finite and Infinite, The.pdf:application/pdf},
}

@book{Urbanski22,
	title = {Volume 2 {Finer} {Thermodynamic} {Formalism} – {Distance} {Expanding} {Maps} and {Countable} {State} {Subshifts} of {Finite} {Type}, {Conformal} {GDMSs}, {Lasota}-{Yorke} {Maps} and {Fractal} {Geometry}},
	isbn = {978-3-11-070269-9},
	url = {https://www.degruyterbrill.com/document/doi/10.1515/9783110702699/html},
	abstract = {This book consists of three volumes. The first volume contains introductory accounts of topological dynamical systems, fi nite-state symbolic dynamics, distance expanding maps, and ergodic theory of metric dynamical systems acting on probability measure spaces, including metric entropy theory of Kolmogorov and Sinai. More advanced topics comprise infi nite ergodic theory, general thermodynamic formalism, topological entropy and pressure. Thermodynamic formalism of distance expanding maps and countable-alphabet subshifts of fi nite type, graph directed Markov systems, conformal expanding repellers, and Lasota-Yorke maps are treated in the second volume, which also contains a chapter on fractal geometry and its applications to conformal systems. Multifractal analysis and real analyticity of pressure are also covered. The third volume is devoted to the study of dynamics, ergodic theory, thermodynamic formalism and fractal geometry of rational functions of the Riemann sphere.},
	language = {en},
	urldate = {2025-08-15},
	publisher = {De Gruyter},
	author = {Urbański, Mariusz and Roy, Mario and Munday, Sara},
	month = jun,
	year = {2022},
	doi = {10.1515/9783110702699},
	keywords = {Dynamische Systeme, Ergodentheorie, Fraktal},
	file = {Full Text PDF:/Users/nathandalaklis/Zotero/storage/JSC7CW8K/Urbański et al. - 2022 - Volume 2 Finer Thermodynamic Formalism – Distance .pdf:application/pdf},
}
 
\end{document}